\newcounter{Kulso}
\newlength{\tabhossz}
\newcommand{\tab}{\par
\advance\@totalleftmargin \tabhossz
\advance\linewidth -\tabhossz
\parshape \@ne \@totalleftmargin \linewidth
\addtolength{\labelsep}{\tabhossz}%
}
\newcommand{\untab}{\par
\advance\@totalleftmargin -\tabhossz   
\advance\linewidth \tabhossz
\parshape \@ne \@totalleftmargin \linewidth
\addtolength{\labelsep}{-\tabhossz}%
}
\newcommand{\psz@cimke}[1]{\item[{\makebox[\labelwidth][l]{#1}}]}
\newcommand{\cB}{\ensuremath{\mathcal{B}}}
\newcommand{\cH}{\ensuremath{\mathcal{H}}}
\newcommand{\cL}{\ensuremath{\mathcal{L}}}
\newcommand{\cX}{\ensuremath{\mathcal{X}}}
\newcommand{\cY}{\ensuremath{\mathcal{Y}}}
\newcommand{\cZ}{\ensuremath{\mathcal{Z}}}
\newcommand{\cP}{\ensuremath{\mathcal{P}}}
\newcommand{\cM}{\ensuremath{\mathcal{M}}}
\newcommand{\fixme}[1]{\textbf{FIX ME!!! #1}}
\newcommand{\Rset}{\mathbb{R}}
\newcommand{\Zset}{\mathbb{Z}}
 \newtheorem{thm}{Theorem }
 \newtheorem{problem}{Problem }
 \newtheorem{claim}[thm]{Claim }
 \newtheorem{cor}[thm]{Corollary }
 \newtheorem{lemma}[thm]{Lemma}
\title{Blocking optimal $k$-arborescences}
\author{Attila Bern\'ath\thanks{
MTA-ELTE Egerv\'ary Research Group,
Department of Operations Research, E\"otv\"os University, P\'azm\'any P\'eter s\'et\'any 1/C, Budapest, Hungary, H-1117.  Supported by  the Hungarian Scientific Research Fund (OTKA, grant number K109240). The second author is supported by the MTA Bolyai Research Scholarship.
E-mails: {\tt bernath@cs.elte.hu} (Attila Bern\'ath), {\tt tkiraly@cs.elte.hu} (Tam\'as Kir\'aly).}
\and
Tam\'as Kir\'aly\footnotemark[1]
}
\begin{document}

\maketitle

\newcommand{\karb}{$k$-arborescence}
\newcommand{\krarb}[1][s]{$#1$-rooted $k$-arborescence}

\begin{abstract}
Given a digraph $D=(V,A)$ and a positive integer $k$, an arc set
$F\subseteq A$ is called a \textbf{\karb} if it is the disjoint union
of $k$ spanning arborescences. The problem of finding a minimum cost \karb\ is known to be polynomial-time solvable using matroid intersection.
In this paper we study the following problem: find a minimum cardinality subset of
arcs that contains at least one arc from every minimum cost \karb.
For $k=1$, the problem was solved in [A. Bernáth, G. Pap , Blocking optimal arborescences, IPCO 2013].
In this paper we give an algorithm for general $k$ that has polynomial running time if $k$ is fixed.
\end{abstract}

\begin{quote}
{\bf Keywords: arborescences, minimum transversal, matroids, polynomial-time algorithms}
\end{quote}
\vspace{5mm}



\section{Introduction}

The \emph{cuts} of a matroid are the minimal transversals of the family of bases; in other words, a subset of the elements is a cut if it is an inclusionwise minimal subset that contains at least
one element from each base. The problem of finding minimum cuts in matroids has been studied in several different contexts (note the distinction between \emph{minimal} and \emph{minimum}: minimal is shorthand for \emph{inclusionwise minimal}, while minimum means \emph{minimum size}). Perhaps the best known special case is
the minimum cut problem in graphs, which can be solved using network flows, and faster algorithms have also been developed (e.g.\ the Nagamochi-Ibaraki algorithm \cite{nagamochi1992computing}). More generally, the minimum cut of $kM$, where
$M$ is a graphic matroid (or even a hypergraphic matroid, see \cite{egresqp-09-05}), can be found in polynomial time. A notable open question is the complexity of finding
a minimum cut in a rigidity matroid.

\ifsodaversion
The minimum cut problem in binary matroids is NP-complete, as proved by Vardy \cite{vardy1997intractability}; Geelen, Gerards, and Whittle \cite{geelen2013highly} conjecture
that the problem is in P for any minor-closed proper subclass of binary matroids. Partial results in this direction have been achieved by Geelen and Kapadia \cite{geelen2015computing}.

\else
The minimum cut of a transversal matroid can also be found in polynomial time; however, the problem of finding a minimum \emph{circuit}
of a transversal matroid is NP-complete \cite{mccormick1983combinatorial}, which implies that the minimum cut problem is NP-complete for gammoids.
Another line of research considers the problem for binary matroids. NP-completeness was proved by Vardy \cite{vardy1997intractability}; Geelen, Gerards, and Whittle \cite{geelen2013highly} conjecture
that the problem is in P for any minor-closed proper subclass of binary matroids. Partial results in this direction have been achieved by Geelen and Kapadia \cite{geelen2015computing}.
\fi

If we consider \emph{minimum cost bases} (or \emph{optimal bases} for brevity) of a matroid $M$, then these form the bases of another matroid which can be obtained by taking the direct sum of certain minors of $M$. Thus we can find a minimum transversal of the family of optimal bases of $M$ by solving minimum cut problems in some minors of $M$. In particular, if the minimum cut problem is solvable in polynomial time in a minor-closed class
of matroids, then a minimum transversal of optimal bases can also be found in polynomial time in this class. For example, since the class of graphic matroids is minor-closed and the minimum cut problem can be solved efficiently, we can also efficiently find a minimum transversal of optimal spanning trees in a graph with edge costs.

Our paper belongs to a line of research that considers directed versions of this problem. Let $D=(V,A)$ be a digraph with node set $V$ and arc set $A$. A
\textbf{spanning arborescence} is an arc set $F\subseteq A$ that is a
spanning tree in the undirected sense and every node has in-degree at
most one. Thus there is exactly one node, the \textbf{root node}, with
in-degree zero. If the node set is clear from the context, spanning
arborescences will be called \textbf{arborescences} for
brevity. Arborescences can be considered as common bases of two
matroids, so the problem of finding a minimum transversal of the
family of arborescences is a special case of the minimum transversal
problem for common bases of two matroids. This problem  is  NP-hard
 in general (as mentioned above, it is NP-hard even when the two matroids coincide). However, the special case for arborescences  can be formulated as the minimization of the
sum of the in-degrees of two disjoint node sets of the digraph, which can be solved efficiently
using network flows. The problem of finding a minimum transversal of
the family of \emph{minimum cost arborescences} is considerably more
difficult. It can still be solved in polynomial time as shown in
\cite{mincostarb}, but the solution requires more sophisticated tools than network flows.

The arc-disjoint union of $k$ spanning arborescences is called a \textbf{\karb}. If $F\subseteq A$ is a \karb\ in a digraph $D=(V, A)$, then its \textbf{root vector} is the vector $q\in \Zset_+^V$ for which $q(v)$ counts the number of arborescences in $F$ that are rooted at $v\in V$. Note that the root vector is determined by the in-degrees, as $q(v)=k-\varrho_F(v)$ for every $v\in V$, so it does not depend on the way a \karb\ is decomposed into arborescences. If every
arborescence has the same root node $s$, then $F$ is called an \textbf{\krarb}.
Given $D=(V,A)$, $k$ and a cost function $c:A\to \Rset_+$, a
{minimum cost \karb} or a {minimum cost \krarb} can be found efficiently using the matroid intersection algorithm;
see \cite[Chapter 53.8]{Schrijver} for a reference, where several related problems are considered. The existence of an \krarb\ is characterized by Edmonds' disjoint arborescence theorem,
while the existence of a \karb\ is characterized by a theorem of Frank \cite{frank1978disjoint}. Frank also gave a linear programming description of the convex hull of \karb{s}, generalizing Edmonds' linear programming description of the convex hull of \krarb{s}.

In this paper we consider the following two problems.

\begin{problem}[\textbf{Blocking optimal \karb s}]\label{prob:1}
Given a digraph $D=(V,A)$, a positive integer $k$, and a cost
function $c:A\to \Rset_+$, find a minimum cardinality transversal of the family of
minimum cost \karb s.
\end{problem}

\begin{problem}[\textbf{Blocking optimal \krarb s}]\label{prob:2}
Given a digraph $D=(V,A)$, a node $s \in V$, a positive integer $k$, and a cost
function $c:A\to \Rset_+$, find a minimum cardinality transversal of the family of
minimum cost \krarb s.
\end{problem}

In Section \ref{sec:variants} we show that the two problems are
polynomial-time equivalent. For $k=1$, these problems have been solved in
\cite{mincostarb}. Moreover, Problem \ref{prob:1} is solved in
\cite{kmincostarb} in the special case when $c\equiv 1$ (note that Problem \ref{prob:2} is a minimum cut problem when $c\equiv 1$). The papers
\cite{mincostarb,kmincostarb} also consider more general weighted versions of these problems.

The main result of the present paper is an algorithm for Problems \ref{prob:1} and \ref{prob:2}  that has polynomial running time when $k$ is constant.
It remains open whether there is a polynomial-time algorithm when $k$ is not fixed, or indeed whether there is an FPT algorithm where $k$ is the parameter.
Along the way we obtain the following result of independent interest: the convex hull of root vectors of minimum cost \karb s is a base polyhedron.
This generalizes the result of Frank \cite{frank1978disjoint} stating that the root vectors of \karb s form a base polyhedron.

The paper is organized as follows. After a brief section on notation, the relationship between different versions of the problem is discussed in Section \ref{sec:variants}, including a dual characterization of optimal \karb{s}.
The next section describes the \textbf{matroid-restricted \karb } problem, a generalization of \karb{s} introduced by Frank \cite{frank2009} that is essential to the proof of the main result. In Section \ref{sec:Ltight}, we describe the connection between matroid-restricted \karb{s} and the dual characterization of optimal \karb{s}. A corollary of this connection is that the
convex hull of the root vectors of optimal \karb{s} is a
base polyhedron (Theorem \ref{thm:optimalroot}).

The structure of minimal transversals is analyzed in Section \ref{sec:blocking}. In the case when the size of the minimum transversal is at least $k$, we derive that there is a minimum transversal with a special structure (Theorem \ref{thm:main}). This leads to the main result of the paper, an algorithm that finds a minimum transversal of optimal \karb{s} in polynomial time if $k$ is constant.

\subsection{Notation}

Let us overview some of the notation and definitions used in the
paper.   Given a digraph $D=(V,A)$ and a node set $Z\subseteq V$,
let $D[Z]$ be the subdigraph induced by $Z$. If $E\subseteq A$ is a
subset of the arc set, then we will identify $E$ and the subgraph
$(V,E)$. Thus $E[Z]$ is obtained from $(V,E)$ by deleting the nodes of
$V-Z$. The arc set of the digraph $D$ will also be denoted
by $A(D)$. The set of arcs of $D$
entering a node set $Z$ is denoted $\delta_D^{in}(Z)$, and $\varrho_D(Z)=|\delta_D^{in}(Z)|$.  For an undirected or directed graph $G=(V,E)$ and a
subset $X\subseteq V$, $i_G(X)$ denotes the number of edges with
both endpoints in $X$.

A \textbf{subpartition} of a subset $X$ of $V$ is a collection of
pairwise disjoint non-empty subsets of $X$. Note that $\emptyset$
cannot be a member of a subpartition, but $\emptyset$ is a valid
subpartition, having no members at all. A set family $\cL\subseteq
2^V$ is said to be \textbf{laminar} if any two members of \cL\ are either disjoint, or one contains the other.
For a vector $x:A\to \Rset$ and subset $Z\subseteq A$ we use the
notation $x(Z)=\sum_{a\in Z}x_a$.

In the paper we will use the $-$ (minus) operator in many roles
beyond subtraction of numbers: for example we will use it for
set-theoretical difference instead of $\setminus$. Furthermore, for a
digraph $D=(V,A)$ and $E\subseteq A$ we will use the notation $D-E$ to mean the digraph $(V,
A-E)$. A one-element set
$\{e\}$ will be denoted without braces by $e$ in some contexts;
for example, $E-e$ means $E-\{e\}$, and this is used even if $e\notin
E$, in which case $E-e=E$. Similarly, for a subpartition $\cX$ and for a member
$X\in \cX$, we write $\cX-X$ instead of $\cX-\{X\}$.

For general background on \textbf{matroids} and \textbf{base polyhedra} we refer the reader to \cite{frank2011connections}. Given a
matroid $M=(S,r)$ (where $S$ is the ground set and $r$ is the rank function) and a positive integer $k$, the \textbf{$k$-shortening} of
$M$ is the matroid $(S, r')$ where $r'(E)=\min \{r(E), k\}$.

Given a function $p:2^S\to \Rset$, a subset $X\subseteq S$ is called
\textbf{separable} if there exists a partition $X_1, X_2, \dots, X_t$
of $X$ such that $p(X)\le \sum_i p(X_i)$. The function $p$ is called
\textbf{near supermodular} if $p(X)+p(Y)\le p(X\cap Y) + p(X\cup Y)$
holds for every intersecting pair $X,Y\subseteq V$ of non-separable
sets. The \textbf{(upper) truncation} of a set function $p:2^S\to \Rset$ (satisfying $p(\emptyset)=0$) is a
set function $p^\wedge:2^S\to \Rset$ defined by
\[p^\wedge(X)=\max\{\sum \{p(Z): Z\in \cZ\}: \cZ \mbox{ is a partition of }X\}.
\]

\begin{thm}\cite[Theorems 15.1.1 and 15.1.3]{frank2011connections}\label{thm:wedge}
The truncation of a near supermodular function is fully
supermodular. The truncation of a nonnegative function is monotone increasing.
If $p$ is near supermodular and the polyhedron $B(p)=\{x \in \Rset^S: x(S)=p(S),\ x(Z) \geq p(Z)\ \forall Z \subseteq S\}$
is non-empty, then $B(p)$ is a base polyhedron and $B(p)=B(p^{\wedge})$.
\end{thm}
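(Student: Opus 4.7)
The plan is to establish the three assertions in order, since each relies on the previous.

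For full supermodularity of $p^{\wedge}$ I would argue by uncrossing. Given $X, Y \subseteq S$, pick partitions $\cX$ of $X$ and $\cY$ of $Y$ that achieve $p^{\wedge}(X)$ and $p^{\wedge}(Y)$, and refine them so that every part is non-separable (splitting a separable part can only weakly increase the total $p$-weight, since separability means the whole is dominated by a partition of itself). Now look for a crossing pair $A \in \cX$, $B \in \cY$ with $A \cap B \ne \emptyset$ but neither containing the other. Since $A$ and $B$ are intersecting and non-separable, near supermodularity yields $p(A) + p(B) \le p(A \cap B) + p(A \cup B)$. Replace the pair by $A \cap B$ (sent towards the $X \cap Y$ side) and $A \cup B$ (towards $X \cup Y$), track how the remaining parts adjust so that the global collection still partitions $X$ and $Y$ respectively, and repeat. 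This process decreases a suitable monovariant (e.g., the number of crossing pairs in $\cX \times \cY$) and terminates in a configuration that furnishes a partition of $X \cup Y$ and a subpartition of $X \cap Y$ whose total $p$-weights sum to at least $p^{\wedge}(X) + p^{\wedge}(Y)$. Extending the subpartition to a partition of $X \cap Y$ by adding singletons (using $p^{\wedge}(\emptyset)=0$, or using monotonicity in the nonnegative case) gives $p^{\wedge}(X \cap Y) + p^{\wedge}(X \cup Y) \ge p^{\wedge}(X) + p^{\wedge}(Y)$.

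Monotonicity for nonnegative $p$ is immediate: if $X \subseteq Y$ and $\cZ$ is an optimal partition of $X$, then $\cZ$ together with the singletons of $Y - X$ is a partition of $Y$, and since $p \ge 0$ its $p$-weight is at least $p^{\wedge}(X)$.

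For the polyhedral claim, note $p^{\wedge} \ge p$ by definition, so $B(p^{\wedge}) \subseteq B(p)$. Conversely, pick $x \in B(p)$ and any $X \subseteq S$; taking an optimal partition $\cZ$ of $X$ gives $x(X) = \sum_{Z \in \cZ} x(Z) \ge \sum_{Z \in \cZ} p(Z) = p^{\wedge}(X)$. Specializing $X = S$ and using the partition $\{S\}$ yields $p^{\wedge}(S) \ge p(S)$, while $p(S) = x(S) \ge p^{\wedge}(S)$, so $p^{\wedge}(S) = p(S)$ and hence $x \in B(p^{\wedge})$. Thus $B(p) = B(p^{\wedge})$. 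Since $p^{\wedge}$ is fully supermodular by the first part, the classical Edmonds–Fujishige theorem (polyhedra defined by a fully supermodular set function with empty-set value $0$ are base polyhedra, provided they are non-empty) shows that $B(p^{\wedge})$, and therefore $B(p)$, is a base polyhedron.

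The main obstacle is the uncrossing step in the first part: one must manage the bookkeeping so that non-separability of all parts is preserved after each uncrossing move (otherwise near supermodularity might not apply to the next crossing pair), and so that the replacement truly produces admissible (sub)partitions of $X \cap Y$ and $X \cup Y$ rather than overlapping collections. The monotonicity and base polyhedron parts are then comparatively routine.
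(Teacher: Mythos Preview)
The paper does not prove this theorem: it is quoted verbatim from Frank's book \cite{frank2011connections} (Theorems 15.1.1 and 15.1.3) and used as a black box throughout, so there is no proof in the paper for your attempt to be compared against.

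Evaluated on its own, your arguments for monotonicity and for $B(p)=B(p^{\wedge})$ are correct and standard. For the supermodularity of $p^{\wedge}$, the uncrossing idea is the right one, but the step you describe as ``replace the pair by $A\cap B$ (sent towards the $X\cap Y$ side) and $A\cup B$ (towards $X\cup Y$)'' does not make sense as stated: after such a replacement you no longer have a partition of $X$ and a partition of $Y$, since $A\cup B$ need not lie in either. The usual way to organize this is to work with the multiset $\cX\cup\cY$, which covers $X\cap Y$ twice and the symmetric difference once; one uncrosses within this multiset, re-splitting any separable set that appears, until the family is laminar, at which point it decomposes into a partition of $X\cup Y$ and a partition of $X\cap Y$. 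Both termination (uncrossing and splitting push a potential in opposite directions) and this final laminar decomposition need an actual argument; these are precisely the ``bookkeeping'' issues you flag, and they constitute the substantive content of the proof rather than a detail to be waved past.
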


Given a digraph $D=(V,A)$ and a positive integer $\alpha$, we will often use an extended digraph
$D^+=(V+s,A^+)$, called the \textbf{$\alpha$-extension} of $D$, that has a new  node $s\notin V$ and $\alpha$ parallel
arcs from $s$ to every node in $V$. If a cost function $c:A\to \Rset$ is also given, then we
extend $c$ to a function $c^+:A^+\to \Rset$ so that $c^+(uv)=c(uv)$
for any $uv\in A$ and $c^+(sv)=\beta$ for any new arc $sv\in A^+-A$, where $\beta$ is some nonnegative real number.
The weighted digraph $(D^+, c^+)$ is then
called the \textbf{$(\alpha,\beta)$-extension of $(D,c)$}.

\section{Relationship between different versions of the problem}\label{sec:variants}

\begin{thm}\label{thm:versions}
Problem \ref{prob:1} (Blocking optimal \karb s) and Problem \ref{prob:2} (Blocking optimal \krarb s) are polynomial-time equivalent.
\end{thm}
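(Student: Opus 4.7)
I will prove polynomial-time equivalence by giving polynomial reductions in both directions; the first is cosmetic, while the second is the technical step.

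\emph{Problem~\ref{prob:2} $\to$ Problem~\ref{prob:1}.} Given $(D, s, k, c)$, set $D' := (V, A - \delta_D^{in}(s))$ with the inherited cost. Every \krarb\ in $D$ lies in $A(D')$, since it uses no arc entering its root, and every \karb\ in $D'$ is automatically $s$-rooted because $\varrho_{D'}(s) = 0$. Costs are preserved, so optimal \krarb{s} in $D$ coincide with optimal \karb{s} in $D'$. Arcs of $\delta_D^{in}(s)$ meet no \krarb, hence they can be removed from any transversal at no loss, giving a size-preserving bijection between minimum transversals for the two instances.

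\emph{Problem~\ref{prob:1} $\to$ Problem~\ref{prob:2}.} Given $(D, k, c)$, form the $(\alpha, \beta)$-extension $(D^+, c^+)$ with new root $s$, setting $\alpha := |A| + k$ parallel $sv$-arcs for each $v \in V$ and $\beta := 0$. Every \karb\ $F \subseteq A$ with root vector $q$ extends to a \krarb\ in $D^+$ by appending any choice of $q(v)$ of the $\alpha$ copies of $sv$ for each $v$; conversely $F^+ \cap A$ is a \karb\ for every \krarb\ $F^+$. Since every \krarb\ uses exactly $k$ arcs out of $s$ and $\beta=0$, we have $c^+(F^+) = c(F)$, so optimal \krarb{s} in $D^+$ correspond (many-to-one) to optimal \karb{s} in $D$. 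Hence any transversal $T \subseteq A$ for Problem~\ref{prob:1} on $(D, k, c)$ is automatically a transversal for Problem~\ref{prob:2} on $(D^+, s, k, c^+)$, giving one inequality between the two minimum transversal sizes.

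For the reverse inequality, let $T^+$ be an optimal transversal in $D^+$. Since $A$ is itself a trivial transversal, $|T^+| \leq |A|$. I claim $T := T^+ \cap A$ is a transversal for Problem~\ref{prob:1}. If not, some optimal \karb\ $F$ in $D$ with root vector $q$ satisfies $T^+ \cap F = \emptyset$, so $T^+$ must hit every extension $F^+$ of $F$ only through $s$-arcs. Writing $c_v := |T^+ \cap \{\text{parallel copies of } sv\}|$, if $c_v \leq \alpha - q(v)$ held for every $v$ with $q(v)>0$, I could choose, independently for each such $v$, a $q(v)$-subset of copies disjoint from $T^+$, producing an extension $F^+$ not hit by $T^+$, a contradiction. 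Therefore some $v$ with $q(v) > 0$ satisfies $c_v \geq \alpha - q(v) + 1 \geq \alpha - k + 1 = |A| + 1$, contradicting $c_v \leq |T^+| \leq |A|$. Thus $T$ is a transversal of size at most $|T^+|$, so the minimum transversal sizes agree, and both reductions clearly run in polynomial time.

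The principal obstacle is exactly this last counting step: the threshold $\alpha \geq |A| + k$ is precisely what forces an optimal $T^+$ in $D^+$ to be ``visible'' inside $A$. With a smaller $\alpha$ (for instance $\alpha = k$) a transversal could hide among the parallel $s$-arcs in a way that does not correspond to any $A$-transversal, and one would need a considerably more delicate argument; taking $\alpha$ slightly above $|A|$ makes the pigeonhole collapse immediate while keeping the construction polynomial.
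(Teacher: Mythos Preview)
Your reduction from Problem~\ref{prob:1} to Problem~\ref{prob:2} is broken, and the error is the sentence ``every \krarb\ uses exactly $k$ arcs out of $s$''. This is false: an $s$-rooted spanning arborescence of $D^+$ is simply a spanning tree in which $s$ has in-degree $0$, so $s$ may have arbitrary out-degree; consequently an \krarb\ may use far more than $k$ arcs leaving $s$. Both of your subsequent claims --- that $F^+\cap A$ is always a \karb\ in $D$, and that $A$ is a transversal of optimal \krarb{s} in $D^+$ --- rest on this mistake and fail along with it.

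Here is a concrete counterexample. Let $V=\{u,v\}$, $A=\{uv,vu\}$, $c(uv)=c(vu)=1$, $k=1$. The optimal \karb{s} in $D$ are $\{uv\}$ and $\{vu\}$, and the minimum transversal has size $2$. With your choice $\beta=0$ and $\alpha=|A|+k=3$, every arc set $\{su_i,sv_j\}$ (choosing one of the three parallel $su$-copies and one of the three $sv$-copies) is an \krarb\ of cost $0$, strictly cheaper than any extension of a \karb\ of $D$. None of these nine optimal \krarb{s} meets $A$, so $A$ is \emph{not} a transversal in $D^+$; indeed any transversal must contain all three copies of $su$ or all three copies of $sv$, so the minimum size in $D^+$ is $3\neq 2$. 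Thus the two problems do not have the same optimum under your reduction.

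The paper avoids this by taking $\beta=\sum_{a\in A}c(a)+1$ rather than $\beta=0$. With $\beta$ this large, any \krarb\ using more than $k$ arcs out of $s$ is strictly costlier than one obtained by extending a \karb\ of $D$ with exactly $k$ $s$-arcs, so optimal \krarb{s} in $D^+$ use exactly $k$ $s$-arcs and correspond to optimal \karb{s} in $D$. Once that correspondence is restored, your $\alpha$-counting argument (which is the same as the paper's) goes through. So the overall architecture of your proof is right; you just need to fix $\beta$.
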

\ifsodaversion
The proof of Theorem \ref{thm:versions} can be found in the Appendix.
\else
\begin{proof}
Problem \ref{prob:2} reduces to Probem \ref{prob:1} by deleting all arcs entering node $s$ from the input digraph. For the other direction, consider an instance $D,k,c$ of Problem \ref{prob:1}, and let $\alpha=|A|+k$, $\beta=\sum_{a\in A}c(a)+1$. Let $(D^+,c^+)$ be the $(\alpha,\beta)$-extension of $(D,c)$. In the instance of Problem \ref{prob:2}
given by $(D^+,k,c^+,s)$, the minimum cost \krarb s
naturally correspond to minimum $c$-cost \karb s in $D$
(since they contain exactly $k$ arcs leaving $s$ because of the value of $\beta$). Moreover, the minimum size of a transversal is at most $|A|$ as $A$ itself is a transversal.
This shows that every minimum transversal is a subset of $A$.
\end{proof}
\fi
To describe the structure of minimum cost \karb s, we introduce the notion of a \karb\ being tight for some laminar family of node subsets.
Given a digraph $D=(V,A)$ and a laminar family $\cL\subseteq 2^V$, a \karb\ $F\subseteq A$ is called \textbf{\cL-tight}
if $F[W]$ is a \karb\ in $D[W]$ for every $W\in \cL$.
Note that if $\cL\subseteq 2^{V-s}$, then an \krarb\ $F\subseteq A$ is  $\cL$-tight if and only if  $\varrho_F(W)= k$ for every $W\in \cL$.
The link between $\cL$-tight \krarb s and minimum cost  \krarb s is provided by the following theorem.

\begin{thm}\cite[Corollary 53.6a]{Schrijver}\label{thm:karbhull}
Given a digraph $D=(V,A)$ and a node $s\in V$, the system \eqref{eq:karblin1}--\eqref{eq:karblin2} below is TDI, and it describes the convex hull of subsets of $A$ containing an \krarb[s].
\begin{eqnarray}
0\le x(a)\le 1\mbox{ for every }a\in A \label{eq:karblin1} \\
\varrho_x(Z)\ge k\mbox{ for ever non-empty }Z\subseteq V-s.\label{eq:karblin2}
\end{eqnarray}
If a cost function $c:A\to \Rset$ is also given and we consider the problem of minimizing $cx$ under the conditions above, then there is an optimal dual solution where the dual variables corresponding to \eqref{eq:karblin2} have laminar support.
\end{thm}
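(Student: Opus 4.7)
The plan is to prove the three assertions—total dual integrality, the convex hull characterization, and the existence of a laminar-support optimal dual—together via the classical uncrossing technique, and then identify the integer vertices of the polyhedron using Edmonds' disjoint arborescence theorem. First I would write the LP dual. Introducing \(y_Z\ge 0\) for each non-empty \(Z\subseteq V-s\) and \(w_a\ge 0\) for each arc, the dual is
\[
\max\; k\sum_{Z} y_Z - \sum_{a} w_a \quad\text{s.t.}\quad \sum_{Z:\, a\in\delta_D^{in}(Z)} y_Z - w_a \le c(a)\;\;\forall a\in A,\;\; y,w\ge 0.
\]

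For the laminar-support property I would pick, among all optimal dual solutions, one that minimizes the spread functional \(\sum_Z y_Z\,|Z|(|V-s|-|Z|)\). Suppose two sets \(Z_1,Z_2\) in the support of \(y\) cross, and set \(\epsilon=\min(y_{Z_1},y_{Z_2})>0\). Decrease \(y_{Z_1},y_{Z_2}\) by \(\epsilon\) each and increase \(y_{Z_1\cap Z_2},y_{Z_1\cup Z_2}\) by \(\epsilon\) each. The arc-wise submodular inequality
\[
\chi^{\delta^{in}(Z_1)} + \chi^{\delta^{in}(Z_2)} \;\ge\; \chi^{\delta^{in}(Z_1\cap Z_2)} + \chi^{\delta^{in}(Z_1\cup Z_2)}
\]
ensures that all dual constraints remain satisfied (with the same \(w\)), while \(|Z_1|+|Z_2|=|Z_1\cap Z_2|+|Z_1\cup Z_2|\) keeps the dual objective unchanged. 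A direct computation using strict submodularity of the function \(Z\mapsto |Z|(|V-s|-|Z|)\) on crossing pairs shows that the spread strictly decreases, a contradiction. Iterating, every optimal dual can be replaced by one with laminar support.

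Next I would deduce integrality. Fix such a dual \((y^*,w^*)\) with laminar support \(\cL\) and restrict attention to the LP obtained by forcing \(y_Z=0\) off \(\cL\). For an arc \(e=uv\), the sets in \(\cL\) that \(e\) enters are those \(Z\in\cL\) with \(v\in Z\) and \(u\notin Z\); by laminarity these form a chain. Augmenting \(\cL\) with the root \(V-s\) and orienting the resulting Hasse forest outward, one sees that the \(\cL\)-indexed dual constraint matrix is a network matrix, hence totally unimodular. For integer \(c\) this yields an integer optimal dual, proving TDI. TDI implies the primal polyhedron is integer; a 0-1 vector \(\chi^F\) belongs to it exactly when \(\varrho_F(Z)\ge k\) for every non-empty \(Z\subseteq V-s\), which by Edmonds' disjoint arborescence theorem is equivalent to \(F\) containing an \krarb[s]. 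This gives the convex hull description.

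The main obstacle I foresee is the network-matrix identification: one must set up the auxiliary tree on \(\cL\cup\{V-s\}\), choose a consistent orientation, and check that for each arc \(e=uv\) the column of the dual matrix is the signed incidence vector of a directed tree path, which is what certifies total unimodularity. The uncrossing argument and the final appeal to Edmonds' theorem are then standard routine given this integrality input.
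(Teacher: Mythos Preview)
The paper does not give its own proof of this theorem; it is quoted from Schrijver's book (Corollary~53.6a) and used as a black box. So there is nothing in the paper to compare your argument against.

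That said, your proposal is the standard route to results of this type and is essentially the argument one finds in Schrijver. The uncrossing step is fine: the objective is preserved because $|Z_1|+|Z_2|=|Z_1\cap Z_2|+|Z_1\cup Z_2|$, and dual feasibility is preserved by submodularity of $\varrho$; the spread functional strictly decreases on crossing pairs, so a minimizer of the spread among optimal duals (which exists since the optimal dual face is a polyhedron and the spread is linear and bounded below by $0$) has laminar support. For the TU step, your identification is correct: once $y$ is restricted to a laminar family $\cL$, the sets in $\cL$ entered by a fixed arc $uv$ form an interval in the chain of members of $\cL$ containing $v$, so the $\cL$-restricted incidence matrix is a network matrix; appending the $-I$ block for the $w$-variables keeps it TU. One small point worth making explicit is that the laminar family $\cL$ depends on $c$, but this is harmless: for each integer $c$ you produce some laminar $\cL$ and then invoke TU for that $\cL$. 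The final identification of the $0$--$1$ points via Edmonds' disjoint arborescence theorem is correct.
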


Complementary slackness conditions imply the following.
\ifsodaversion
The proof of Corollary \ref{cor:slackness} can be found in the Appendix.
\fi

\begin{cor}\label{cor:slackness}
Given a digraph $D=(V,A)$, a cost function $c:A\to \Rset_+$, a node
$s\in V$ and a positive integer $k$, one can find a laminar family
$\cL\subseteq 2^{V-s}$ and two disjoint arc-sets $A_0, A_1\subseteq A$
with the property that  an \krarb\ $F\subseteq A$ has minimum cost if and only if $A_1\subseteq F\subseteq A-A_0$ and $F$ is \cL-tight.
\end{cor}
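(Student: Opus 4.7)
The plan is to read off $\cL$, $A_0$, $A_1$ from an optimal dual solution to the LP of Theorem \ref{thm:karbhull}. First I would solve the primal LP $\min\{cx : x \text{ satisfies } \eqref{eq:karblin1}\text{--}\eqref{eq:karblin2}\}$ together with its dual, invoking the TDI statement of Theorem \ref{thm:karbhull} to obtain an optimal dual solution whose multipliers $y_Z$ for the in-degree constraints \eqref{eq:karblin2} have laminar support $\cL\subseteq 2^{V-s}$. Writing $z^-_a$ and $z^+_a$ for the multipliers of the box constraints $x(a)\ge 0$ and $x(a)\le 1$, I would set $A_0:=\{a:z^-_a>0\}$ and $A_1:=\{a:z^+_a>0\}$. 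These two arc sets are automatically disjoint, since otherwise complementary slackness would force $x_a=0$ and $x_a=1$ on any integral primal optimum.

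Next I would verify both directions of the equivalence by complementary slackness against an arbitrary integral primal optimum $\chi_F$. In the forward direction, CS yields $\chi_F(a)=1$ whenever $z^+_a>0$, $\chi_F(a)=0$ whenever $z^-_a>0$, and $\varrho_F(Z)=k$ whenever $y_Z>0$; since $\cL\subseteq 2^{V-s}$, the last condition is precisely \cL-tightness, as noted immediately before Theorem \ref{thm:karbhull}. For the converse, any \krarb\ $F$ with $A_1\subseteq F\subseteq A-A_0$ that is \cL-tight has indicator vector $\chi_F$ satisfying every CS condition against $(y,z^-,z^+)$, so $\chi_F$ is primal optimal and $F$ is a minimum cost \krarb.

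To extract $\cL,A_0,A_1$ in polynomial time, one can solve the primal and dual LPs by the ellipsoid method (with submodular minimization as separation oracle for \eqref{eq:karblin2}) or by combinatorial matroid intersection, and can force laminarity of the support of $y$ via the standard uncrossing argument behind the proof of Theorem \ref{thm:karbhull}. I do not foresee any real obstacle: all of the work is already carried out by the TDI statement of Theorem \ref{thm:karbhull}, which delivers a \emph{single} optimal dual solution with laminar $y$-support and thereby guarantees that one triple $(\cL,A_0,A_1)$ characterizes \emph{all} minimum cost \krarb s simultaneously.
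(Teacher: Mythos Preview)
Your proposal is correct and follows essentially the same approach as the paper: take a laminar optimal dual for the LP of Theorem~\ref{thm:karbhull} and read off $\cL,A_0,A_1$ from complementary slackness. The only cosmetic difference is that you dualize the lower bounds $x(a)\ge 0$ explicitly via multipliers $z^-_a$ and set $A_0=\{a:z^-_a>0\}$, whereas the paper keeps a single dual variable $z_a$ for the upper bound and defines $A_0$ as the set of arcs where the dual inequality $\sum_{Z:a\in\delta^{in}(Z)}y_Z-z_a\le c_a$ is strict; since the slack of that inequality is precisely your $z^-_a$, the two definitions coincide.
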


\ifsodaversion

The arcs in $A_0$ are called \textbf{forbidden} arcs, while the arcs in $A_1$ are the \textbf{mandatory} arcs.

\else

\begin{proof}
Consider the LP $\min\{cx: x\in \Rset^A,\ 0\le x\le 1,\ \varrho_x(Z)\ge
k$ for ever non-empty $Z\subseteq V-s\}$. By Theorem
\ref{thm:karbhull}, this has an integer optimal solution, which is a
minimum cost \krarb. Let $y^*,z^*$ be an optimal solution of the
dual
\begin{align*}\label{lp:dual}
\max \sum_{\emptyset \neq Z \subseteq V-s} ky_Z-\sum_{a\in A}z_a\\
 y\in \Rset_+^{2^{V-s}-\{\emptyset\}},z\in \Rset_+^A  \\
 \sum_{Z:
  a\in \delta^{in}(Z)}y_Z-z_a \le c_a \text{ for every } a\in A.
\end{align*}
  We can assume that the support of $y^*$ is a laminar family
$\cL\subseteq 2^V$ by Theorem \ref{thm:karbhull}. The complementary slackness conditions show that a
feasible primal solution $x^*$ is optimal if and only if the following
three conditions hold.
\begin{enumerate}
\item $x^*_a=0$ for every $a\in A$ with $\sum_{Z: a\in \delta^{in}(Z)}y^*_Z-z^*_a< c_a$ (\textbf{forbidden} arcs),
\item $\varrho_{x^*}(W)=k$ for every $W\in \cL$, and
\item $x^*_a=1$ for every $a\in A$ with $z^*_a>0$ (\textbf{mandatory} arcs).
\end{enumerate}

By denoting the forbidden arcs by $A_0$ and the mandatory arcs by $A_1$ we obtain the required structure.
\end{proof}

\fi

\begin{thm}
Problem \ref{prob:2} can be reduced to the following Problem \ref{prob:lam} in polynomial time.
\begin{problem}\label{prob:lam}
Given a digraph $D=(V,A)$, a root $s$, and a
laminar family $\cL\subseteq 2^{V-s}$, find a minimum cardinality transversal of the family of \cL-tight \krarb s.
\end{problem}
\end{thm}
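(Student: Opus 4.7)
The plan is to exploit the structural characterization of optimal \krarb{s} provided by Corollary~\ref{cor:slackness}, and then handle the mandatory arc set $A_1$ by a case distinction.

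First, I would check in polynomial time (e.g.\ via matroid intersection) whether an \krarb\ exists in $D$ at all. If not, the family to be blocked is empty, so the empty set is a minimum transversal and we are done. Otherwise, I would apply Corollary~\ref{cor:slackness} in polynomial time to obtain a laminar family $\cL\subseteq 2^{V-s}$ together with disjoint arc sets $A_0, A_1\subseteq A$ such that an \krarb\ $F$ has minimum cost if and only if $A_1\subseteq F\subseteq A-A_0$ and $F$ is \cL-tight. This step is routine since the corresponding LP in the proof of the corollary can be solved in polynomial time, and its optimal dual solution can be made to have laminar support by Theorem~\ref{thm:karbhull}.

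Next I would split into two cases according to whether $A_1$ is empty. If $A_1\neq\emptyset$, then every minimum cost \krarb\ contains every arc of $A_1$, so $\{a\}$ is a transversal of size $1$ for any $a\in A_1$. Since the family of minimum cost \krarb{s} is non-empty by assumption, no transversal can be smaller, so we output $\{a\}$ and terminate. If instead $A_1=\emptyset$, then the minimum cost \krarb{s} in $D$ are exactly the subsets $F\subseteq A-A_0$ that are \cL-tight \krarb{s} in $D-A_0$: indeed, any such $F$ avoids $A_0$ and lies in $D-A_0$, and conversely any \cL-tight \krarb\ in $D-A_0$ is a subset of $A$ that avoids $A_0$ and satisfies the conditions of the corollary (with $A_1=\emptyset$). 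Consequently the family of minimum cost \krarb{s} in $D$ equals the family of \cL-tight \krarb{s} in $D-A_0$, so a minimum transversal of one family is a minimum transversal of the other, and we output the instance $(D-A_0,s,\cL)$ of Problem~\ref{prob:lam}.

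The main obstacle in this reduction is the presence of mandatory arcs, which would generally need to be ``forced'' into every arborescence of a Problem~\ref{prob:lam} instance (and it is not obvious how to do so while preserving the transversal problem). The key observation that sidesteps this difficulty is that the mere existence of a mandatory arc collapses the problem to the trivial case where the minimum transversal has size one, making the case split above sufficient.
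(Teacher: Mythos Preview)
Your proposal is correct and follows essentially the same approach as the paper: apply Corollary~\ref{cor:slackness} to obtain $\cL$, $A_0$, $A_1$, output a singleton transversal if $A_1\neq\emptyset$, and otherwise pass to the instance $(D-A_0,s,\cL)$ of Problem~\ref{prob:lam}. Your write-up is somewhat more detailed (the preliminary existence check and the explicit justification that a singleton is optimal), but the argument is the same.
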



\begin{proof}
Given a digraph
$D=(V,A)$, a cost function $c:A\to \Rset_+$, a node $s\in V$ and a positive integer $k$,
we consider $A_0,A_1$, and $\cL$ as in Corollary \ref{cor:slackness}.
If there exists a mandatory arc, then it is a singleton transversal of the family of optimal \krarb s. If $A_1=\emptyset$, then the
problem is equivalent to finding a minimum transversal of the family of $\cL$-tight \krarb s in $A - A_0$.
\end{proof}

Note that we can decide in polynomial time whether an \cL-tight \krarb\ exists by finding a minimum cost \krarb\ for
the cost function $c(e)=|\{W \in \cL: e \in \delta^{in}_D(W)\}|$.

\newcommand{\indep}{matroid-restricted}

\section{Matroid-restricted \karb{s}} \label{sec:matroidal}

In this section we introduce \indep\ \karb s, a notion that will be useful in describing the structure of \cL-tight \karb s.
Let $D=(V,A)$ be a digraph, and for every $v\in V$ let $M_v=(\delta^{in}_D(v), r_v)$ be a matroid. Let furthermore $\cM=\{M_v: v\in V\}$ be the family of these matroids.
A \karb\ $F\subseteq A$ is said to
be \textbf{\cM-\indep} (or \textbf{\indep} for short) if $F\cap \delta^{in}_D(v)$ is independent in $M_v$ for
every $v\in V$.
Similarly, an \krarb\ $F\subseteq A$ is said to be \cM-\indep\ if  $F\cap \delta^{in}_D(v)$ is independent for
every $v\in V-s$ (note that the matroid $M_s$ does not play a role here).
The notion of \indep\ \krarb\  was introduced by Frank \cite{frank2009} in a slightly more general setting,
where there is an additional matroid on the set of arcs leaving  $s$.
Our definition corresponds to the case where this is a free matroid. Some of the results of
this section could be derived from \cite[Theorem 4.5]{frank2009}; however,
since the context is different, it is easier to include self-contained proofs.

Let us define the matroid $M^\oplus=(A, r^\oplus)$ as the
direct sum of the matroids $M_v$ $ (v\in V)$. The following theorem is an easy consequence of the matroid intersection theorem.
\ifsodaversion
The proof of Theorem \ref{thm:indepkarb} can be found in the Appendix.
\fi

\begin{thm}\label{thm:indepkarb}
Given a digraph $D=(V,A)$ and matroids $M_v=(\delta^{in}_D(v), r_v)$
for every $v\in V$, there exists a \indep\ \karb\
in $D$ if and only if the following inequality holds for every subpartition \cX\ of $V$:
\begin{equation}\label{eq:subpart}
\sum \{r^\oplus(\delta^{in}_{D}(X)): X\in \cX\} \ge k(|\cX|-1).
\end{equation}
\end{thm}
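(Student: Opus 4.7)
The plan is to prove necessity directly and sufficiency via the matroid intersection theorem.

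For necessity, given a matroid-restricted $k$-arborescence $F$ and a subpartition $\cX$, since $F$ decomposes into $k$ arc-disjoint spanning arborescences each with a unique root, one has $\sum_{X\in\cX}\varrho_F(X)\ge k(|\cX|-1)$: each arborescence contributes at least $|\cX|-1$ arcs to the sum, missing only the member of $\cX$ containing its root. Moreover, $F\cap\delta^{in}(X)$ is a disjoint union over $v\in X$ of $M_v$-independent sets, so $|F\cap\delta^{in}(X)|\le r^\oplus(\delta^{in}(X))$. Summing over $X\in\cX$ yields the inequality.

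For sufficiency, I plan to apply the matroid intersection theorem. Let $\tilde M^\oplus$ be the direct sum of the $k$-shortenings $\tilde M_v$ of the matroids $M_v$, and let $N$ be the matroid union of $k$ copies of the graphic matroid of the underlying undirected multigraph of $D$. Using the Nash--Williams-based characterization that an arc set $F$ is a $k$-arborescence iff $|F|=k(|V|-1)$, $\varrho_F(v)\le k$ for all $v$, and the underlying undirected graph is a union of $k$ spanning trees, I would verify that matroid-restricted $k$-arborescences are exactly the common independent sets of $\tilde M^\oplus$ and $N$ of size $k(|V|-1)$. By matroid intersection, existence is then equivalent to $r_{\tilde M^\oplus}(E)+r_N(A\setminus E)\ge k(|V|-1)$ for every $E\subseteq A$.

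To derive this inequality from \eqref{eq:subpart}, I would fix $E$ and use the matroid union rank formula to find an optimizer $B'\subseteq A\setminus E$ (which we may take to be closed in the graphic matroid within $A\setminus E$), whose connected components $U_1,\dots,U_m$ partition $V$ and yield $r_N(A\setminus E)=\sum_j\varrho_{A\setminus E}(U_j)+k(|V|-m)$. Setting $s_j=\sum_{v\in U_j}\min(r_v(E\cap\delta^{in}(v)),k)$, $t_j=\varrho_{A\setminus E}(U_j)$, and $r_j=r^\oplus(\delta^{in}(U_j))$, the goal reduces to $\sum_j(s_j+t_j)\ge k(m-1)$. Split the indices as $I=\{j:r_j<k\}$ and $J=\{j:r_j\ge k\}$. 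For $j\in I$, each $r_v(\delta^{in}(U_j)\cap\delta^{in}(v))<k$, so submodularity combined with the definition of the $k$-shortening yields $r_j\le s_j+t_j$; applying \eqref{eq:subpart} to the subpartition $\{U_j:j\in I\}$ (trivially if $I=\emptyset$) gives $\sum_{j\in I}r_j\ge k(|I|-1)$. Combined with the key lemma $s_j+t_j\ge k$ for $j\in J$, this gives $\sum_j(s_j+t_j)\ge k(|I|-1)+k|J|=k(m-1)$, as required.

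The main obstacle is the lemma that $s_j+t_j\ge k$ whenever $j\in J$, since the straightforward bound $r_j\le s_j+t_j$ can fail once some $r_v(\delta^{in}(U_j)\cap\delta^{in}(v))$ exceeds $k$. The plan is a vertex-by-vertex case analysis: if some $v\in U_j$ has $r_v(E\cap\delta^{in}(v))\ge k$ then $\min(r_v(E\cap\delta^{in}(v)),k)=k$ already contributes $k$ to $s_j$; if some $v$ has $r_v(\delta^{in}(U_j)\cap\delta^{in}(v))\ge k$ with $r_v(E\cap\delta^{in}(v))<k$, then the chain $r_v(\delta^{in}(U_j)\cap\delta^{in}(v))\le r_v(E\cap\delta^{in}(v))+|\delta^{in}(U_j)\cap\delta^{in}(v)\cap(A\setminus E)|$ forces enough arcs of $A\setminus E$ entering $v$ to bring that vertex's contribution to $s_j+t_j$ up to $k$; and if every $v\in U_j$ satisfies $r_v(\delta^{in}(U_j)\cap\delta^{in}(v))<k$, the submodular bound gives $r_j\le s_j+t_j$, whence $s_j+t_j\ge r_j\ge k$.
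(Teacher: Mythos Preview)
Your proposal is correct. Both you and the paper prove necessity the same way, and both reduce sufficiency to matroid intersection between the $k$-fold graphic matroid $N$ and the direct sum $\tilde M^\oplus$ of the $k$-shortenings of the $M_v$, asserting (as the paper also does without proof) that the common independent sets of size $k(|V|-1)$ are exactly the matroid-restricted $k$-arborescences.

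The only divergence is in how the matroid-intersection inequality $r_{\tilde M^\oplus}(E)+r_N(A\setminus E)\ge k(|V|-1)$ is verified. The paper argues by contradiction: it takes a violating $E$, normalizes it to be closed in $N$ and bridge-free, and proves a structural claim that such an $E$ equals $\bigcup_{Y\in\cY} A(D[Y])$ for a partition $\cY$ of $V$ into ``tight'' sets; a violating subpartition then falls out as $\cX=\{Y\in\cY: r_2(\delta^{in}_D(Y))<k\}$. You instead argue directly, applying the matroid-union rank formula on the side of $A\setminus E$ to obtain a partition $\{U_j\}$ from an optimizer, and then running a vertex-level case analysis to establish your key lemma $s_j+t_j\ge k$ for $j\in J$. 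Both routes ultimately hinge on the same observation that when $r^\oplus(\delta^{in}(X))<k$ the $k$-shortening is inactive, so that $r^\oplus$ and the truncated rank coincide there. Your version avoids the paper's structural claim about closed bridgeless sets in $kM_G$ at the price of the three-case split; the paper's version is a bit cleaner once that claim is in hand but needs the extra lemma.
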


\ifsodaversion

\else

\begin{proof}[Proof of Theorem \ref{thm:indepkarb}]
The necessity of \eqref{eq:subpart} is clear: if $F\subseteq A$ is a
\indep\ \karb\ and \cX\ is a subpartition of $V$, then $k(|\cX|-1)\le
\sum_{X\in \cX} \varrho_F(X)\le \sum_{X\in
  \cX}r^\oplus(\delta^{in}_{D}(X))$.  In order to prove sufficiency,
let $M_1=(A,r_1)$ be  $k$ times the circuit matroid of the underlying undirected graph of $D$. Note that condition \eqref{eq:subpart}
implies that $D$ contains $k$ edge-disjoint spanning trees, thus
$r_1(A)=k(|V|-1)$. For every $v\in V$, let $M_v'=(\delta^{in}_D(v),
r'_v)$ be the $k$-shortening of $M_v$, that is $r'_v(E)=\min\{r_v(E),
k\}$ for every $E\subseteq \delta^{in}_D(v)$. Let furthermore $M_2=(A,
r_2)$ be the direct sum of the matroids
$M'_v$. Observe that $F\subseteq A$ is a \indep\ \karb\ in $D$ if and
only if $F$ is a common independent set of $M_1$ and $M_2$ and has
size $k(|V|-1)$. By Edmonds' matroid intersection theorem \cite{edmonds},
such an $F$ exists if and only if
\begin{equation}\label{eq:matroidint}
r_1(E)+ r_2(A-E)\ge k(|V|-1)\mbox{ for every } E \subseteq A.
\end{equation}
We show that condition \eqref{eq:subpart} implies
\eqref{eq:matroidint}. Suppose that \eqref{eq:matroidint} fails for
some $E$. Clearly, we can assume that $E$ is closed in $M_1$ and
$M_1|E$ does not contain bridges (a \textbf{bridge} in a matroid is an element
that is contained in every base).
\begin{claim}
If $E\subseteq A$ is closed in $M_1$ and $M_1|E$ does not contain
bridges, then there exists a partition $\cY$ of $V$
such that $r_1(D[Y])=k(|Y|-1)$ for every $Y\in \cY$ and
$E=\cup_{Y\in \cY}D[Y]$.
\end{claim}
\begin{proof}
We say that a non-empty $Y\subseteq V$ is \textbf{tight} (with respect
to $E$) if $r_1(E[Y])= k(|Y|-1)$. In other words, $Y$ is tight if
$E[Y]$ contains $k$ edge-disjoint  trees, each spanning $Y$. For example, sets of
size 1 are tight. If $Y_1, Y_2$ are both tight and $Y_1\cap Y_2\ne
\emptyset$ then $Y_1\cup Y_2$ is tight, too. To prove this, let
$T_1\subseteq E$ be a tree spanning $Y_1$ and $T_2\subseteq E$ be a
tree spanning $Y_2$, and observe that $T_1$ can be extended to a tree
spanning $Y_1\cup Y_2$ using the edges of $T_2-E[Y_1]$. Therefore let
$\cY$ be the partition of $V$ consisting of the maximal tight
sets. Since $E$ is closed in $M_1$, it contains every arc of $D$ that
is induced in some $Y\in \cY$. Let $G'=(V', E')$ be the graph
obtained from $(V,E)$ after contracting every $Y\in \cY$ into a node
$y$. We claim
that $i_{G'}(Z)<k(|Z|-1)$ for every $Z\subseteq V'$ with $|Z|\ge
2$. Assume not and take an inclusionwise minimal set
$Z$ with $i_{G'}(Z) \geq k(|Z|-1)$. Then $G'[Z]$ contains $k$ edge-disjoint spanning trees by the theorem of Tutte and Nash-Williams \cite{tutte1961problem},
which contradicts the maximality of the tight sets in
$\cY$. This implies that the bases of $M_1|E$ contain every arc of $E$
going between different members of the partition $\cY$. But since
$M_1|E$ does not contain bridges, $E=\cup_{Y\in \cY}D[Y]$, as claimed.
\end{proof}
Consider the partition \cY\ in the above claim and observe that
$r_1(\cup_{Y\in \cY}D[Y]) + r_2(\cup_{Y\in \cY}\delta^{in}_D(Y))
=k(|V|-|\cY |) + \sum_{Y\in \cY} r_2(\delta^{in}_D(Y)) < k(|V|-1) $, thus $\sum_{Y\in \cY} r_2(\delta^{in}_D(Y)) < k(|\cY |-1)$. Let
$\cX=\{Y\in \cY: r_2(Y)<k \}$ and note that $\sum_{X\in \cX}
r_2(\delta^{in}_D(X)) < k(|\cX|-1)$ holds as well.  But
$r_2(\delta^{in}_{D}(X))=r^\oplus(\delta^{in}_{D}(X))$ for every $X\in \cX$, thus we get a contradiction
with \eqref{eq:subpart}.
\end{proof}

\fi

Let us fix some $s\in V$. From now on we are interested in
\indep\ \krarb s, and we assume $r_v(\delta^{in}_D(v))=k$ for every
$v\in V-s$.  Let
\begin{multline}\cB^s=\{I\subseteq \delta^{out}_D(s): |I|= k \text{ and} \\
\exists \text{ \indep\ \krarb[s] $F\subseteq A$ s.t.\ $I=F\cap
\delta^{out}_D(s)$}\}.\label{eq:Bs}
\end{multline}
 Our aim below is to show that $\cB^s$ is the
family of bases of a matroid on ground set $\delta_D^{out}(s)$.
For an arc set $I\subseteq \delta^{out}_D(s)$, we use the notation $I\cup D[V-s]$
for the digraph obtained
from $D$ by deleting the edges of $ \delta^{out}_D(s) - I$.
\ifsodaversion
The proof of Lemma  \ref{lem:rankin} can be found in the Appendix.
\fi

\begin{lemma}\label{lem:rankin}
Let $D=(V,A)$ be a digraph, let $s\in V$, and let
$M_v=(\delta^{in}_D(v), r_v)$ be matroids of rank $k$ for every $v\in V-s$. The following properties are
equivalent for $I\subseteq
\delta^{out}(s)$.
\begin{enumerate}[(i)]
\item $I\in \cB^s$, \label{it:1}
\item $|I|=k$ and $I$ satisfies
$r^\oplus(\delta^{in}_{I\cup D[V-s]}(X))\ge k$ for every non-empty $X\subseteq V-s$,
\label{it:2}
\item $|I|=k$ and $I$ satisfies
$|I\cap E| + r^\oplus(\delta^{in}_{D-E}(X))\ge k$ for every $E\subseteq \delta^{out}_D(s)$ and
non-empty $X\subseteq V-s$.
\label{it:3}
\end{enumerate}
\end{lemma}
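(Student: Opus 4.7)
The plan is to establish (i) $\Leftrightarrow$ (ii) by applying Theorem \ref{thm:indepkarb} to the auxiliary digraph $D':=I\cup D[V-s]$ (with each matroid $M_v$ restricted to $\delta^{in}_{D'}(v)$), and then to derive (ii) $\Leftrightarrow$ (iii) directly from monotonicity of the rank function $r^\oplus$. For (i) $\Rightarrow$ (ii), I would take a matroid-restricted $s$-rooted $k$-arborescence $F$ with $F\cap\delta^{out}(s)=I$. Since $\varrho_F(s)=0$, we have $F\subseteq I\cup D[V-s]$, and every non-empty $X\subseteq V-s$ satisfies $\varrho_F(X)\ge k$ because each of the $k$ spanning arborescences rooted at $s$ must enter $X$. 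The set $F\cap\delta^{in}(X)$ is independent in $M^\oplus$ (direct-sum independence reduces to $M_v$-independence for each $v\in X$) and is contained in $\delta^{in}_{D'}(X)$, so $r^\oplus(\delta^{in}_{D'}(X))\ge k$.

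For (ii) $\Rightarrow$ (i), I would verify the subpartition inequality \eqref{eq:subpart} for $D'$: given a subpartition $\cX$ of $V$, at most one member can contain $s$, and every other member $X$ lies in $V-s$ and contributes $r^\oplus(\delta^{in}_{D'}(X))\ge k$ by (ii), which already yields the required bound $k(|\cX|-1)$. Theorem \ref{thm:indepkarb} then produces a matroid-restricted $k$-arborescence $F'$ in $D'$. Because $D'$ has no arcs entering $s$, the root vector $q$ of $F'$ satisfies $q(s)=k$ and thus $q(v)=0$ for $v\ne s$, so $F'$ is $s$-rooted; furthermore every spanning arborescence rooted at $s$ uses at least one arc of $\delta^{out}(s)$, so $|F'\cap\delta^{out}(s)|\ge k=|I|$, forcing $F'\cap\delta^{out}(s)=I$ and hence $I\in\cB^s$.

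The equivalence (ii) $\Leftrightarrow$ (iii) reduces to a direct calculation. For (iii) $\Rightarrow$ (ii), take $E:=\delta^{out}(s)-I$, so that $|I\cap E|=0$ and $\delta^{in}_{D-E}(X)=\delta^{in}_{D'}(X)$ by unfolding definitions. For (ii) $\Rightarrow$ (iii), note the inclusion $\delta^{in}_{D'}(X)-E\subseteq\delta^{in}_{D-E}(X)$; combining this with the elementary matroid estimate $r^\oplus(S-T)\ge r^\oplus(S)-|S\cap T|$ and the observation that $E\subseteq\delta^{out}(s)$ forces $E\cap\delta^{in}_{D'}(X)\subseteq I\cap E$, one gets $r^\oplus(\delta^{in}_{D-E}(X))\ge k-|I\cap E|$ directly from (ii). The main obstacle is the (ii) $\Rightarrow$ (i) direction, where Theorem \ref{thm:indepkarb} a priori yields only a matroid-restricted $k$-arborescence in $D'$, not necessarily an $s$-rooted one using all arcs of $I$; both refinements hinge on the interplay between $\varrho_{D'}(s)=0$, the tight arc count $|F'|=k(|V|-1)$, and the equality $|I|=k$.
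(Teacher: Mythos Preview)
Your proposal is correct and follows essentially the same approach as the paper: both prove (i) $\Leftrightarrow$ (ii) by applying Theorem \ref{thm:indepkarb} to $D'=I\cup D[V-s]$, and both handle (iii) $\Rightarrow$ (ii) by specializing $E$. The only minor difference is that the paper closes the cycle via (i) $\Rightarrow$ (iii) (counting $\varrho_{F\cap E}(X)+\varrho_{F-E}(X)$ for a witnessing arborescence), whereas you prove (ii) $\Rightarrow$ (iii) directly from rank monotonicity; both arguments are straightforward. Your write-up also makes explicit why the $k$-arborescence produced by Theorem \ref{thm:indepkarb} in $D'$ is $s$-rooted and uses all of $I$, a detail the paper leaves implicit.
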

\ifsodaversion

\else

\begin{proof}
It is clear that \eqref{it:1} implies \eqref{it:2}.
Let us prove that \eqref{it:2} implies \eqref{it:1}.  Let $D'=I\cup
D[V-s]$.
We will prove that there exists a \indep\  \karb\ in $D'$ by applying Theorem
\ref{thm:indepkarb}. Suppose that $\sum
\{r^\oplus(\delta^{in}_{D'}(X)): X\in \cX\} < k(|\cX|-1)$ for some
subpartition \cX. Note that we can assume
$r^\oplus(\delta^{in}_{D'}(X)) < k$ for every member $X$ of \cX, and
clearly $|\cX|>1$ has to hold. Therefore there must exist a member
$X\in\cX$ with $s\notin X$ and $r^\oplus(\delta^{in}_{D'}(X))<k$,
contradicting \eqref{it:2}.

Next we show that \eqref{it:1} implies \eqref{it:3}.  If $F\subseteq
A$ is a \indep\  \krarb[s] with $I=F\cap \delta^{out}_D(s)$,
$E\subseteq \delta^{out}_D(s)$, and $X\subseteq V-s$, then $k\le
\varrho_F(X) = \varrho_{F\cap E}(X)+ \varrho_{F- E}(X) \le |F\cap E| +
r^\oplus(\delta^{in}_{D-E}(X)) = |I\cap E| +
r^\oplus(\delta^{in}_{D-E}(X))$.  Finally, we show that \eqref{it:3}
implies \eqref{it:2}. Take some non-empty $X\subseteq V-s$, let
$E=(\delta_D^{out}(s)\cap \delta_D^{in}(X))-I$ and apply
the property in \eqref{it:3} for $X$ and $E$ to obtain \eqref{it:2}.
\end{proof}
\fi




Consider the following polyhedron.
\begin{align}
P=\{x\in \Rset^{\delta^{out}(s)}:\ & x\ge 0,\\
&x(E)\ge k-r^\oplus(\delta^{in}_{D-E}(X)) \mbox{ for every }\label{eq:xZ}
E\subseteq \delta^{out}_D(s) \mbox{ and }  \emptyset \neq X\subseteq V-s\}.
\end{align}
Clearly, $P$ is non-empty if and only if
$r^\oplus(\delta^{in}_{D}(X))\ge k$ for every non-empty $X\subseteq
V-s$ (the condition is necessary because
otherwise \eqref{eq:xZ} does not hold for $E=\emptyset$; on the other hand, if
this condition holds, then $k{\mathbf 1}\in
P$).
Furthermore, it is enough to require \eqref{eq:xZ} for non-empty
subsets $X$ that contain the head of every arc of $E$.
We can also observe that non-negativity of $x$  is implied by \eqref{eq:xZ} in the definition of $P$.
Indeed, let $st\in A$ be arbitrary and apply \eqref{eq:xZ} for $E=\{st\}$ and
$X=\{t\}$ to get $x(st)\ge k-r_t(\delta^{in}(t)-st)\ge 0$.

From now on we assume that $P$ is non-empty. Define the set function $p:2^{\delta^{out}_D(s)}\to \Rset$ as
\begin{equation}\label{eq:pZ}
p(E) = \max \{k-r^\oplus(\delta^{in}_{D-E}(X)): \emptyset \ne X\subseteq V-s\}.
\end{equation}
Note that $p\le k$ and
$p(\delta_D^{out}(s))=k-r^\oplus(\delta^{in}_{D-\delta_D^{out}(s)}(V-s))=k$. Furthermore,
$p(\emptyset)=0$ ($p(\emptyset)\le 0$ by the non-emptiness of $P$, and take any $v\in V-s$ and use $r_v(\delta_D^{in}(v))=k$ to obtain $p(\emptyset)\ge k-r^\oplus(\delta_D^{in}(v))=0$), and $p$ is monotone increasing.  With this definition, $P$ is described as

\[P=\{x\in \Rset^{\delta^{out}_D(s)}: x(E)\ge p(E)\mbox{ for every }E\subseteq \delta^{out}_D(s)\}.\]

Recall that a function $p:2^S\to \Rset$ is \textbf{near
  supermodular} if $p(X)+p(Y)\le p(X\cap Y) + p(X\cup Y)$ holds for
every intersecting pair $X,Y\subseteq V$ of non-separable sets, where
a set $X$ is separable if there exists a partition $X_1, X_2, \dots,
X_t$ of $X$ such that $p(X)\le \sum_i p(X_i)$.
\ifsodaversion
The proof of Theorem  \ref{thm:supermod} can be found in the Appendix.
\fi
\begin{thm}\label{thm:supermod}
The function $p$ defined in \eqref{eq:pZ} is near supermodular.
\end{thm}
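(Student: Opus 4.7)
The plan is to check the defining inequality $p(E_1) + p(E_2) \le p(E_1 \cap E_2) + p(E_1 \cup E_2)$ for every intersecting pair $E_1, E_2 \subseteq \delta_D^{out}(s)$ of non-separable sets. I pick maximizers $X_i \subseteq V-s$ with $p(E_i) = k - r^\oplus(\delta_{D-E_i}^{in}(X_i))$ and set $X = X_1 \cap X_2$, $Y = X_1 \cup X_2$, $E = E_1 \cap E_2$, $F = E_1 \cup E_2$. The argument will split on whether $X$ is empty.

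In the main case $X \ne \emptyset$, both $X$ and $Y$ are valid witnesses for $p(E)$ and $p(F)$, so the claim reduces to the rank-sum inequality
\[
r^\oplus(\delta_{D-E}^{in}(X)) + r^\oplus(\delta_{D-F}^{in}(Y)) \le r^\oplus(\delta_{D-E_1}^{in}(X_1)) + r^\oplus(\delta_{D-E_2}^{in}(X_2)).
\]
Using $r^\oplus = \sum_v r_v$, I will verify this vertex by vertex. For $v \in V-s$ set $\alpha_v(Z) = \{uv \in \delta_D^{in}(v) : u \notin Z\}$ and $A_i = \alpha_v(X_i) - E_i$. For $v \in X$, standard set algebra gives $A_1 \cap A_2 = \alpha_v(Y) - F$, and the crucial identity is $A_1 \cup A_2 = \alpha_v(X) - E$; granted both, submodularity of $r_v$ delivers the vertex-wise bound. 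For $v \in Y - X$ only one term appears on each side, and the inequality reduces to the set inclusion $\alpha_v(Y) - F \subseteq \alpha_v(X_i) - E_i$ for the unique $i$ with $v \in X_i$.

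The hard part will be justifying the identity $A_1 \cup A_2 = \alpha_v(X) - E$ for $v \in X$. The inclusion $\subseteq$ is immediate from $X \subseteq X_i$ and $E \subseteq E_i$. For $\supseteq$, given $uv \in \alpha_v(X) - E$ I distinguish $u \ne s$ and $u = s$. If $u \ne s$, then $uv \notin \delta_D^{out}(s) \supseteq E_1 \cup E_2$, and $u \notin X$ gives $u \notin X_i$ for some $i$, so $uv \in A_i$. If $u = s$, then $s \notin X_1, X_2$ automatically, while $sv \notin E_1 \cap E_2$ puts $sv$ outside some $E_i$, so again $sv \in A_i$.

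For the degenerate case $X = \emptyset$ I distinguish two subcases. If $E_i \cap \delta_D^{in}(X_i) = \emptyset$ for some $i$, then $X_i$ is also a witness for $p(\emptyset)$, giving $p(E_i) \le p(\emptyset) = 0$, whence the required inequality follows from monotonicity of $p$. Otherwise I invoke non-separability: using $X_1$ as witness shows $p(E_1 \cap \delta_D^{in}(X_1)) \ge p(E_1)$, so combined with $p \ge 0$, if $E_1 - \delta_D^{in}(X_1)$ were non-empty the partition $\{E_1 \cap \delta_D^{in}(X_1),\ E_1 - \delta_D^{in}(X_1)\}$ would separate $E_1$. Hence $E_1 \subseteq \delta_D^{in}(X_1)$, and symmetrically $E_2 \subseteq \delta_D^{in}(X_2)$. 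But then $E_1 \cap E_2 \subseteq \{sv : v \in X_1 \cap X_2\} = \emptyset$, contradicting $E_1 \cap E_2 \ne \emptyset$.
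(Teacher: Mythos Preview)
Your proof is correct and follows essentially the same approach as the paper: the paper isolates your rank-sum inequality as a separate Claim (proved vertex-by-vertex via the same identities $A_1\cap A_2=\alpha_v(Y)-F$ and $A_1\cup A_2=\alpha_v(X)-E$) and isolates the observation that non-separability forces the heads of $E_i$ into $X_i$ as another Claim, then combines them to get $X_1\cap X_2\ne\emptyset$. Your organization differs only in that you branch on whether $X_1\cap X_2=\emptyset$ up front and treat the edge case $E_i\cap\delta_D^{in}(X_i)=\emptyset$ explicitly (the paper's wording glosses over this boundary case), but the substance is the same.
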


\ifsodaversion

\else

For the proof of Theorem \ref{thm:supermod} we need the following claims.

\begin{claim}\label{cl:rplus}
Let $E_1, E_2\subseteq \delta_D^{out}(s)$ and  $X_1, X_2\in
V-s$ be arbitrary, then
\begin{equation}
r^\oplus(\delta^{in}_{D-E_1}(X_1))+ r^\oplus(\delta^{in}_{D-E_2}(X_2)) \ge r^\oplus(\delta^{in}_{D-(E_1\cup E_2)}(X_1\cup X_2)) + r^\oplus(\delta^{in}_{D-(E_1\cap E_2)}(X_1\cap X_2)).
\end{equation}
\end{claim}
\begin{proof}
By the properties of the direct sum, it is enough to show the following for an arbitrary $v\in V$, where $\Delta$ denotes $\delta^{in}_D(v)$.
\begin{equation}\label{eq:rvsub}
r_v(\delta^{in}_{\Delta-E_1}(X_1))+ r_v(\delta^{in}_{\Delta-E_2}(X_2)) \ge r_v(\delta^{in}_{\Delta-(E_1\cup E_2)}(X_1\cup X_2)) + r_v(\delta^{in}_{\Delta-(E_1\cap E_2)}(X_1\cap X_2)).
\end{equation}
If $v\notin X_1\cup X_2$, then there is nothing to prove, every term is
zero on both sides of \eqref{eq:rvsub}. If $v\in X_1-X_2$, then the
second term is zero on both sides of \eqref{eq:rvsub}, and the inequality
$r_v(\delta^{in}_{\Delta-E_1}(X_1))\ge
r_v(\delta^{in}_{\Delta-(E_1\cup E_2)}(X_1\cup X_2))$ is
implied by the mononicity of $r_v$. Clearly, the case $v\in X_2-X_1$
is analogous, therefore assume $v\in X_1\cap X_2$. Observe that \eqref{eq:rv1} and \eqref{eq:rv2} holds. For an illustration, see Figure \ref{fig:rv}.
\begin{eqnarray}\label{eq:rv1}
\delta^{in}_{\Delta-E_1}(X_1)\cap \delta^{in}_{\Delta-E_2}(X_2) = \delta^{in}_{\Delta-(E_1\cup E_2)}(X_1\cup X_2)\\
\label{eq:rv2} \delta^{in}_{\Delta-E_1}(X_1)\cup \delta^{in}_{\Delta-E_2}(X_2) = \delta^{in}_{\Delta-(E_1\cap E_2)}(X_1\cap X_2).
\end{eqnarray}
\begin{figure}
\begin{center}
\input{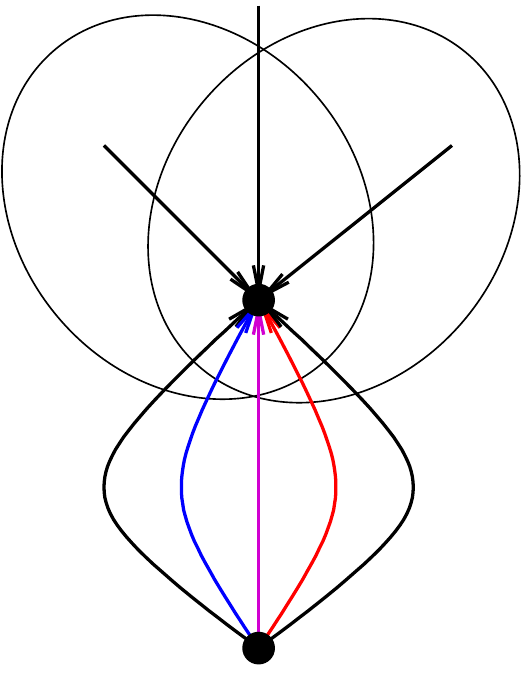_t}
\caption{An illustration for proving \eqref{eq:rv1} and \eqref{eq:rv2}. The arcs of $(E_1-E_2)\cap \delta_D^{in}(v)$ are coloured blue, those in $(E_2-E_1)\cap \delta_D^{in}(v)$ are red, and those in $(E_1\cap E_2)\cap \delta_D^{in}(v)$ are magenta. That is, $\Delta-E_1$ is the set of arcs in the figure that are neither blue, nor magenta, etc.}
\label{fig:rv}
\end{center}
\end{figure}
This, together with the submodularity of $r_v$, finishes the proof.
\end{proof}


\newcommand{\dels}{\ensuremath{\delta_D^{out}(s)}}

Let us introduce the following notation. For a set $E\subseteq
\delta_D^{out}(s)$, let $X_E\subseteq V-s$ be an arbitrary subset that
attains the maximum in the definition \eqref{eq:pZ} of $p(E)$ (that
is, $X_E\ne \emptyset$ and $p(E)=k-r^\oplus(\delta^{in}_{D-E}(X_E))$).

\begin{claim}\label{cl:headin}
If $E\subseteq \dels$ is non-separable, then $X_E$ contains the head of every arc of $E$.
\end{claim}
\begin{proof}
Suppose not and let $E_1\subsetneq E$ be the subset of those arcs which
have their head in $X_E$. Then
$p(E)=k-r^\oplus(\delta^{in}_{D-E}(X_E))=k-r^\oplus(\delta^{in}_{D-E_1}(X_E))\le
p(E_1)$. But then
$p(E)\le p(E_1)+p(E-E_1)$ by the non-negativity of $p$, contradicting
the non-separability of $E$.
\end{proof}

\begin{proof}[Proof of Theorem \ref{thm:supermod}]
Let $E_1, E_2\subseteq\delta^{out}_D(s) $ be non-separable sets so
that $E_1\cap E_2\ne \emptyset$. By Claim \ref{cl:headin}, $X_i=X_{E_i}$ contains the head of each arc of $E_i$ for both $i=1,2$.
This implies that $X_1\cap X_2\ne \emptyset$, and  Claim \ref{cl:rplus} gives
\begin{eqnarray*}
p(E_1)+p(E_2)= \sum_{i=1,2}k-r^\oplus(\delta^{in}_{D-E_i}(X_i))\le \\
2k-\left(r^\oplus(\delta^{in}_{D-(E_1\cup E_2)}(X_1\cup X_2)) + r^\oplus(\delta^{in}_{D-(E_1\cap E_2)}(X_1\cap X_2))\right)\le \\
p(E_1\cap E_2)+p(E_1\cup E_2).
\end{eqnarray*}
\end{proof}

\fi

Theorems \ref{thm:wedge} and \ref{thm:supermod} imply that $P$ is an integer polyhedron. It is also easy to see the following.

\begin{cor}
The polyhedron $B=\{x\in P: x( \delta^{out}_D(s)) = k\}$ (if not empty) is a base
polyhedron of a matroid. It is the convex hull of incidence vectors of members of
$\cB^s$.
\end{cor}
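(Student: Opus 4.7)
The plan is to combine Theorems \ref{thm:wedge} and \ref{thm:supermod} with Lemma \ref{lem:rankin}. Since $p(\delta_D^{out}(s)) = k$, the defining equation $x(\delta_D^{out}(s)) = k$ of $B$ forces the constraint $x(\delta_D^{out}(s)) \ge p(\delta_D^{out}(s))$ to hold with equality, so $B = B(p)$. By Theorems \ref{thm:supermod} and \ref{thm:wedge}, this equals $B(p^\wedge)$, which is a (polymatroid) base polyhedron for the fully supermodular, monotone increasing, integer-valued function $p^\wedge$ satisfying $p^\wedge(\emptyset)=0$ and $p^\wedge(\delta_D^{out}(s))=k$.

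To upgrade $B$ from a polymatroid to a matroid base polyhedron, I would define $b(F) = k - p^\wedge(\delta_D^{out}(s) - F)$ for $F \subseteq \delta_D^{out}(s)$ and verify that $b$ is a matroid rank function. Submodularity, monotonicity, $b(\emptyset) = 0$, and integrality all transfer directly from the corresponding properties of $p^\wedge$; the only property requiring genuine work is the unit-increment bound $b(F+a) - b(F) \le 1$, equivalently $p^\wedge(E+a) \le p^\wedge(E) + 1$ for every $E \subseteq \delta_D^{out}(s)$ and $a \notin E$. Directly from \eqref{eq:pZ}, removing one extra arc can decrease $r^\oplus(\delta^{in}_{D-Z}(X))$ by at most one, hence $p(Z+a) \le p(Z) + 1$ for every $Z$, and in particular $p(\{a\}) \le 1$. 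To lift this to $p^\wedge$, I would take an optimal partition witnessing $p^\wedge(E+a)$ and remove $a$ from the part containing it (discarding that part entirely if it was the singleton $\{a\}$): the resulting partition of $E$ has total $p$-weight at least $p^\wedge(E+a) - 1$.

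With $b$ established as a matroid rank function, the standard inequality description $\{x \ge 0 : x(\delta_D^{out}(s)) = k,\ x(F)\le b(F)\ \forall F\}$ of the matroid base polyhedron is converted into $B(p^\wedge) = B$ via the substitution $F = \delta_D^{out}(s) - E$, proving the first assertion. For the second, the vertices of $B$ are the incidence vectors $1_I$ of the bases of this matroid, and $1_I \in B$ is equivalent to $|I|=k$ together with $|I \cap E| \ge p(E)$ for every $E$ (the sharper $p^\wedge$ bound follows by partitioning $E$ and summing); expanding $p(E)$ via \eqref{eq:pZ} yields exactly the characterization of $\cB^s$ given in Lemma \ref{lem:rankin}(iii), so the vertex set of $B$ is $\{1_I : I \in \cB^s\}$. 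The main obstacle I anticipate is the lifting of the unit-increment property from $p$ to $p^\wedge$; everything else is a routine translation between equivalent supermodular/submodular descriptions of the same base polyhedron.
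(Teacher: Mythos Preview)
Your proof is correct and uses the same tools as the paper (Theorems~\ref{thm:wedge}, \ref{thm:supermod}, and Lemma~\ref{lem:rankin}), but you take a more laborious route to the one step that requires work. You establish the unit-increment property $p^\wedge(E+a)\le p^\wedge(E)+1$ by first proving it for $p$ from the rank-drop bound on $r^\oplus$, and then lifting it to the truncation via a partition-manipulation argument. This is fine, and your lifting argument is valid.

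The paper bypasses all of this by proving the equivalent primal statement $x\le 1$ directly: for $x\in B$ and an arc $st$, plug $E=\delta^{out}_D(s)-st$ and $X=V-s$ into the defining inequality \eqref{eq:xZ} to get $k-x(st)=x(E)\ge k-r^\oplus(\delta^{in}_{st}(V-s))=k-r_t(\{st\})\ge k-1$, hence $x(st)\le 1$. Once $0\le x\le 1$ is known for the integer base polyhedron $B=B(p^\wedge)$, it is automatically a matroid base polyhedron, and Lemma~\ref{lem:rankin}(iii) identifies its vertices with $\cB^s$ exactly as you argue. So the paper replaces your rank-function verification and partition lifting with a single well-chosen constraint; your approach has the virtue of making the rank function $b$ explicit, which the paper does anyway later in Corollary~\ref{cor:indepmatroid}.
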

\begin{proof}
We show that $x\in B$ implies $x\le 1$. This, together with Theorems
\ref{thm:wedge} and \ref{thm:supermod} and Lemma \ref{lem:rankin}, proves the
corollary. Take $x\in B$ and $st\in A$. Let $E=\delta^{out}(s)-st$ and
$X= V-s$. By \eqref{eq:xZ}, we have $k-x(st)=x(E)\ge k-r^\oplus(\delta^{in}_{st}(V-s)) = k-r_t(\{st\}) \ge k-1$.
\end{proof}

The following claim describes the (fully supermodular) truncation of $p$.
\ifsodaversion
The proof can be found in the Appendix.
\fi

\begin{claim}\label{cl:wedge}
For any $E\subseteq \delta^{out}_D(s)$,
\begin{equation}\label{eq:pwedge2}
p^\wedge(E)=\max\left\{\sum_{X \in \cX} (k-r^\oplus(\delta^{in}_{D-E}(X))): \cX \mbox{ is a subpartition of }V-s\right\}.
\end{equation}
\end{claim}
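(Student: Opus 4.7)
My plan is to prove equality by checking both inequalities between $p^\wedge(E)$ and the right-hand side, which I will denote $q(E)$. The common thread is the identity
\[\delta^{in}_{D-Z_X}(X)=\delta^{in}_D(X)-Z_X=\delta^{in}_{D-E}(X),\]
valid whenever $Z_X:=E\cap\delta^{in}_D(X)$, because an arc of $E$ enters $X$ iff its head lies in $X$ (its tail is $s\notin X$).

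For $p^\wedge(E)\ge q(E)$, I would take a subpartition $\cX$ of $V-s$ achieving $q(E)$ and discard any $X$ with $k-r^\oplus(\delta^{in}_{D-E}(X))\le 0$, which does not decrease the sum. For each remaining $X$ the set $Z_X$ must be non-empty: otherwise $\delta^{in}_{D-E}(X)=\delta^{in}_D(X)$, and the standing non-emptiness of $P$ forces $r^\oplus(\delta^{in}_D(X))\ge k$, contradicting the positivity of the contribution we kept. Thus $\{Z_X:X\in\cX\}$ is a family of non-empty pairwise disjoint subsets of $E$, which together with the leftover class $E-\bigcup_X Z_X$ (if non-empty) forms a partition of $E$. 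Using the identity above and $p\ge 0$, each $p(Z_X)\ge k-r^\oplus(\delta^{in}_{D-E}(X))$, and summing gives $p^\wedge(E)\ge q(E)$.

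For $p^\wedge(E)\le q(E)$, I would start from an optimal partition $\cZ$ of $E$ together with maximizers $X_Z\subseteq V-s$, so $p(Z)=k-r^\oplus(\delta^{in}_{D-Z}(X_Z))$. The obstacle is that the $X_Z$'s need not be pairwise disjoint, so they do not immediately form a subpartition. To overcome this I plan to uncross iteratively: whenever $X_{Z_1}\cap X_{Z_2}\ne\emptyset$ for distinct $Z_1,Z_2\in\cZ$, I apply Claim \ref{cl:rplus} with $E_i:=Z_i$ (noting $Z_1\cap Z_2=\emptyset$) to obtain
\[p(Z_1)+p(Z_2)\le \bigl(k-r^\oplus(\delta^{in}_{D-(Z_1\cup Z_2)}(X_{Z_1}\cup X_{Z_2}))\bigr)+\bigl(k-r^\oplus(\delta^{in}_D(X_{Z_1}\cap X_{Z_2}))\bigr).\]
Non-emptiness of $P$ bounds the second parenthesis by $0$, and the first is at most $p(Z_1\cup Z_2)$, yielding $p(Z_1)+p(Z_2)\le p(Z_1\cup Z_2)$. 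Optimality of $\cZ$ forces equality throughout, so merging $Z_1,Z_2$ into $Z_1\cup Z_2$ with maximizer $X_{Z_1}\cup X_{Z_2}$ keeps $\cZ$ optimal while strictly reducing $|\cZ|$. The process terminates with a partition whose maximizers $\{X_Z\}$ are pairwise disjoint, hence form a subpartition $\cX$ of $V-s$. Finally, since $Z\subseteq E$ implies $\delta^{in}_{D-Z}(X_Z)\supseteq\delta^{in}_{D-E}(X_Z)$, we get
\[p^\wedge(E)=\sum_Z\bigl(k-r^\oplus(\delta^{in}_{D-Z}(X_Z))\bigr)\le\sum_{X\in\cX}\bigl(k-r^\oplus(\delta^{in}_{D-E}(X))\bigr)\le q(E).\]

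The hard part is the uncrossing in the $\le$ direction. The essential ingredients are Claim \ref{cl:rplus} (cross-submodularity of the composite rank) and the standing assumption that $P\ne\emptyset$, which together supply the extra $\ge k$ term needed to offset $p(Z_1\cup Z_2)$. This is morally the same computation that powers Theorem \ref{thm:supermod}, specialized here to disjoint $E_1,E_2$ coming from a partition of $E$.
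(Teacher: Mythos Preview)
Your proof is correct and follows essentially the same approach as the paper. The paper's proof only spells out the $\le$ direction: it takes an optimal partition $\cH$ of $E$ of minimum cardinality, observes each part is non-separable, and then applies Claim \ref{cl:rplus} exactly as you do (bounding the intersection term by $p(\emptyset)=0$, which is your use of ``$P\ne\emptyset$'') to derive a contradiction if two maximizers $X_{H_1},X_{H_2}$ intersect. Your iterative merging is the same uncrossing argument, just phrased procedurally rather than extremally. Your explicit treatment of the $\ge$ direction via the sets $Z_X=E\cap\delta^{in}_D(X)$ is a detail the paper leaves implicit.
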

\ifsodaversion
\else
\begin{proof}
Let $E\subseteq \delta^{out}_D(s)$ and let \cH\ be a partition of
$E$ that gives $p^\wedge(E)=\sum \{p(H): H\in \cH\}$ and, subject to
this, $|\cH|$ is minimal. Clearly, every $H\in \cH$ is non-separable.
We claim that $\{X_H: H\in \cH\}$ is a subpartition of $V-s$. If there
exist $H_1, H_2\in \cH$ so that $X_{H_1}\cap X_{H_2}\ne \emptyset$,
then (by Claim \ref{cl:rplus}) $p(H_1)+p(H_2)\le 2k-(r^\oplus(\delta^{in}_{D-(H_1\cap
  H_2)}(X_{H_1}\cap X_{H_2})) + r^\oplus(\delta^{in}_{D-(H_1\cup
  H_2)}(X_{H_1}\cup X_{H_2})))\le p(H_1\cup H_2) + p(\emptyset) =
p(H_1\cup H_2)$, therefore $\cH'=\cH-\{H_1, H_2\} + \{H_1\cup H_2\}$
also gives $p^\wedge(E)=\sum \{p(H): H\in \cH'\}$, contradicting our
choice of \cH.
\end{proof}

\fi

\begin{cor}\label{cor:indepmatroid}
Let $D=(V,A)$ be a digraph, let $s\in V$, and let $M_v=(\delta^{in}_D(v), r_v)$
($v\in V-s$) be matroids of rank $k$.  The family
$\cB^s$ defined in \eqref{eq:Bs}, if non-empty, defines the family of bases of a
matroid $M^s$ on ground set $\delta^{out}_D(s)$. The family is not empty if
and only if
\begin{enumerate}[(a)]
\item \label{it:Pnempty} $r^\oplus(\delta^{in}_{D}(X)\ge k$ for every non-empty $X\subseteq
V-s$, and
\item \label{it:Bnempty} $\sum \{k-r^\oplus(\delta^{in}_{D[V-s]}(X)): X\in \cX\}\le k $ for every  subpartition \cX\ of $V-s$.
\end{enumerate}
The rank function of $M^s$ is given by the following formula for any $E\subseteq \delta_D^{out}(s)$:
\[r^s(E)=\min\left\{\sum_{X\in \cX} r^\oplus(\delta^{in}_{E\cup D[V-s]}(X)) - k(|\cX|-1): \cX \mbox{ is a subpartition of }V-s\right\}.\]
\end{cor}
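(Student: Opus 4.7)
The plan is to build directly on the preceding corollary, which already establishes that $B = \{x \in P : x(\delta^{out}_D(s)) = k\}$, when non-empty, is a base polyhedron and is the convex hull of incidence vectors of the members of $\cB^s$. Consequently $\cB^s$ is automatically the family of bases of a matroid on $\delta^{out}_D(s)$, and the remaining work is to characterize non-emptiness and to compute the rank function.

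For the non-emptiness direction, I would first use the remark immediately following the definition of $P$: $P \ne \emptyset$ is equivalent to condition (a). Assuming (a), Theorem \ref{thm:supermod} together with Theorem \ref{thm:wedge} let me identify $B = B(p)$ with the base polyhedron $B(p^\wedge)$ of the supermodular truncation, which is non-empty precisely when $p^\wedge(\delta^{out}_D(s)) = p(\delta^{out}_D(s)) = k$. Applying Claim \ref{cl:wedge} for $E = \delta^{out}_D(s)$ and using the identity $\delta^{in}_{D - \delta^{out}_D(s)}(X) = \delta^{in}_{D[V-s]}(X)$ for $X \subseteq V-s$ then rewrites the inequality $p^\wedge(\delta^{out}_D(s)) \le k$ exactly as condition (b). For the converse, given any \indep\ \krarb $F$ with $I = F \cap \delta^{out}_D(s) \in \cB^s$, condition (a) follows from $r^\oplus(\delta^{in}_D(X)) \ge \varrho_F(X) \ge k$ (the first inequality by matroid-restriction, the second because each of the $k$ arborescences enters $X$); summing $\varrho_F(X) \ge k$ over a subpartition $\cX$ of $V-s$, decomposing $\varrho_F(X) = |I \cap \delta^{in}_D(X)| + \varrho_{F[V-s]}(X)$, and using $\sum_{X \in \cX} |I \cap \delta^{in}_D(X)| \le |I| = k$ together with matroid-restriction for $F[V-s]$ yields condition (b).

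For the rank function, the preceding corollary supplies $B \subseteq [0,1]^{\delta^{out}_D(s)}$, so the integer points of $B$ are exactly the indicator vectors of members of $\cB^s$ and hence $r^s(E) = \max\{x(E) : x \in B\}$. Since $B = B(p^\wedge)$ with $p^\wedge$ supermodular and $p^\wedge(\emptyset) = 0$, the classical duality for base polyhedra of supermodular functions gives $\min\{x(\delta^{out}_D(s) - E) : x \in B\} = p^\wedge(\delta^{out}_D(s) - E)$, so $r^s(E) = k - p^\wedge(\delta^{out}_D(s) - E)$. Substituting Claim \ref{cl:wedge} and the identity $\delta^{in}_{D - (\delta^{out}_D(s) - E)}(X) = \delta^{in}_{E \cup D[V-s]}(X)$ for $X \subseteq V-s$ (the arcs not removed that enter $X$ are exactly those of $E$ together with those of $D[V-s]$) produces the claimed expression.

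The main obstacle I anticipate is the bookkeeping around the transition $p \leadsto p^\wedge$: one needs to verify that the automatic non-negativity $B \subseteq \Rset_+^{\delta^{out}_D(s)}$ holds (via $x(\{e\}) \ge p(\{e\}) \ge 0$), that the base polyhedra $B(p)$ and $B(p^\wedge)$ genuinely agree in the relevant regime, and that Claim \ref{cl:wedge} can be freely pushed through both the non-emptiness condition and the rank formula. Once these identifications are in place, all remaining steps are routine.
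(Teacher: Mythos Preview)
Your proposal is correct and follows essentially the same route as the paper: both arguments invoke the preceding corollary for the matroid structure of $\cB^s$, use condition (a) as the characterization of $P\ne\emptyset$, identify $B=B(p)=B(p^\wedge)$ via Theorems~\ref{thm:wedge} and~\ref{thm:supermod}, read off condition (b) as $p^\wedge(\delta^{out}_D(s))\le k$ through Claim~\ref{cl:wedge}, and obtain the rank as $r^s(E)=k-p^\wedge(\delta^{out}_D(s)-E)$ (the paper phrases this as ``$p^\wedge$ is the co-rank function''). Your extra direct combinatorial check that $\cB^s\ne\emptyset$ implies (a) and (b) is a harmless addition; the paper relies on the chain of equivalences instead.
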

\ifsodaversion
The proof of Corollary \ref{cor:indepmatroid} is in the Appendix.
\else
\begin{proof}
Consider the function $p^\wedge$ defined by \eqref{eq:pwedge2}.
By Theorem \ref{thm:wedge}, $p^\wedge$ is monotone increasing and
supermodular, and $P=\{x\in \Rset^{\delta^{out}(s)}:
x(E)\ge p^\wedge(E)\mbox{ for every }E\subseteq
\delta^{out}_D(s)\}$ if $P$ is non-empty. Thus $B=\{x\in P: x( \delta^{out}_D(s)) = k\}$
is not empty if and only if $P\ne \emptyset$ and
$p^\wedge(\delta^{out}_D(s))=k$, that is, if and
only if both \eqref{it:Pnempty} and \eqref{it:Bnempty} hold.
Since the fully supermodular function describing the base polyhedron $B$ is $p^\wedge$, it is the co-rank function of the matroid $M^s$, and its rank function is given by the formula
\begin{multline*}
 r^s(E)=p^\wedge(\delta^{out}_D(s))- p^\wedge(\delta^{out}_D(s)-E)=k-p^\wedge(\delta^{out}_D(s)-E) \\= \min\{\sum_{X\in \cX} r^\oplus(\delta^{in}_{E\cup D[V-s]}(X)) - k(|\cX|-1): \cX \mbox{ is a subpartition of }V-s\}.
 \end{multline*}

\end{proof}
\fi

\section{Matroidal description of \cL-tight \karb s} \label{sec:Ltight}

Let $D=(V,A)$ be a digraph, let $\cL\subseteq 2^V$ be a laminar family, and
assume that there exists an \cL-tight \karb\ in
$D$. Without loss of generality, we also assume that $V$ and all singletons
are in \cL. Let furthermore $D^+$ denote the $(|A|+k)$-extension of $D$.
The \cL-tight \karb s in $D^+$ are all rooted at $s$ and, since $V\in \cL$, there is a natural
(though not one-to-one) correspondence between
\cL-tight \karb s in $D$ and those in
$D^+$.
For $W\in \cL$, let $D_W$ denote the digraph obtained from $D^+$ by contracting
$V+s-W$ to a single node $s_W$ and removing the loops that arise.
Note that there is a natural bijection between $\delta^{out}_{D_W}(s_W)$ and $ \delta^{in}_{D^+}(W)$; we will basically identify these two arc-sets in the discussion below.
The main theorem of this section is the following.
\ifsodaversion
The proof of Theorem   \ref{thm:MF} can be found in the Appendix.
\fi

\begin{thm}\label{thm:MF}
The family $\cB_W=\{I\subseteq \delta^{out}_{D_W}(s_W): |I|= k$ and $I$
can be extended to an $\cL[W]$-tight \krarb[s_W] in
$D_W\}$ forms the family of bases of a matroid
$M_W=(\delta^{out}_{D_W}(s_W), r_W)$.
\end{thm}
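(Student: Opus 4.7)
The plan is to proceed by induction on the laminar family $\cL[W] := \{L \in \cL : L \subseteq W\}$, realizing $M_W$ as the matroid produced by Corollary~\ref{cor:indepmatroid} applied to a suitable contraction of $D_W$. Let $W_1, \ldots, W_t$ be the maximal proper members of $\cL$ contained in $W$; since every singleton belongs to $\cL$, laminarity forces them to partition $W$. The base case $W = \{v\}$ is trivial: $\cL[W]$-tightness is vacuous and any $k$-subset of $\delta^{out}_{D_W}(s_W)$ is in $\cB_W$, giving the uniform rank-$k$ matroid. For the inductive step, the induction hypothesis produces rank-$k$ matroids $M_{W_i}$ on $\delta^{out}_{D_{W_i}}(s_{W_i})$ with base family $\cB_{W_i}$. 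Form $\tilde D_W$ by contracting each $W_i$ in $D_W$ to a single node $\tilde w_i$, identify $\delta^{in}_{\tilde D_W}(\tilde w_i)$ with $\delta^{in}_{D^+}(W_i) = \delta^{out}_{D_{W_i}}(s_{W_i})$, and place $M_{W_i}$ on it. Corollary~\ref{cor:indepmatroid} applied to $\tilde D_W$ with root $s_W$ yields a matroid on $\delta^{out}_{\tilde D_W}(s_W) = \delta^{out}_{D_W}(s_W)$ whose bases are exactly the $k$-subsets that extend to a matroid-restricted \krarb[s_W] of $\tilde D_W$. The theorem reduces to identifying this base family with $\cB_W$.

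For the forward direction, let $F$ be an $\cL[W]$-tight \krarb[s_W] in $D_W$ extending some $I$, and set $F' := F \setminus \bigcup_i A(D[W_i])$. A double-count using $\varrho_F(v) = k$ for $v \in W$ and $|F[W_i]| = k(|W_i|-1)$ (from tightness) yields $\varrho_F(W_i) = k$, hence $\varrho_{F'}(\tilde w_i) = k$; combined with the bound $\varrho_F(Z) \geq k$ for non-empty $Z \subseteq W$ coming from Edmonds' theorem, this shows that $F'$ is a \krarb[s_W] of $\tilde D_W$. Moreover, viewed as an arc set in $D_{W_i}$, the set $F_i := F[W_i] \cup (F \cap \delta^{in}_{D_W}(W_i))$ is an $\cL[W_i]$-tight \krarb[s_{W_i}] (the in-degree and Edmonds conditions transfer directly, and tightness at $W' \subseteq W_i$ is inherited from $F$). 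Therefore $F' \cap \delta^{in}_{\tilde D_W}(\tilde w_i) = F_i \cap \delta^{out}_{D_{W_i}}(s_{W_i})$ lies in $\cB_{W_i}$, and so is a base of $M_{W_i}$ by induction. Hence $F'$ is matroid-restricted, and $I$ is a base of the matroid delivered by Corollary~\ref{cor:indepmatroid}.

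The reverse direction contains the main obstacle. Given a matroid-restricted \krarb[s_W] $F'$ of $\tilde D_W$ extending $I$, each $F' \cap \delta^{in}_{\tilde D_W}(\tilde w_i)$ is a base of $M_{W_i}$ and hence, by induction, extends to an $\cL[W_i]$-tight \krarb[s_{W_i}] $F_i$ of $D_{W_i}$. Set $F := F' \cup \bigcup_i F_i[W_i]$. Tightness at any $W' \in \cL$ strictly contained in $W$ is immediate, since $W' \subseteq W_i$ for some $i$ makes $F[W'] = F_i[W']$; tightness at $W$ itself, if required, is a routine consequence of the \krarb[s_W] property of $F$ in $D_W$. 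The subtle point is proving that $F$ is itself a \krarb[s_W] in $D_W$, and this will be done by synchronizing arborescence decompositions. Fix decompositions $F' = T'_1 \sqcup \cdots \sqcup T'_k$ into spanning arborescences of $\tilde D_W$ rooted at $s_W$ and $F_i = S_{i,1} \sqcup \cdots \sqcup S_{i,k}$ into spanning arborescences of $D_{W_i}$ rooted at $s_{W_i}$. Since $|F_i \cap \delta^{out}_{D_{W_i}}(s_{W_i})| = k$ and each $S_{i,\ell}$ must contain at least one arc leaving $s_{W_i}$, each contains exactly one; reindex so that the unique arc of $S_{i,\ell}$ leaving $s_{W_i}$ equals, as an arc of $D^+$, the unique arc of $T'_\ell$ entering $\tilde w_i$. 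Then $T_\ell := T'_\ell \cup \bigcup_i S_{i,\ell}[W_i]$ is a spanning arborescence of $D_W$ rooted at $s_W$: every non-root vertex has in-degree exactly $1$, and any alleged cycle would either live within some $W_i$ (impossible since $S_{i,\ell}$ is acyclic) or contract to a cycle in $T'_\ell$. The $T_\ell$ are arc-disjoint and their union is $F$, so $F$ is an $\cL[W]$-tight \krarb[s_W] in $D_W$ extending $I$, placing $I$ in $\cB_W$. This completes the correspondence and the induction.
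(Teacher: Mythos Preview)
Your proof is correct and follows essentially the same approach as the paper's: induction on $\cL[W]$, contracting the maximal proper members $W_i$ to single nodes, endowing them with the inductively-defined matroids $M_{W_i}$, and invoking Corollary~\ref{cor:indepmatroid}. The only difference is that in the reverse direction you explicitly synchronize arborescence decompositions to verify that the glued arc set $F$ is an $s_W$-rooted $k$-arborescence, whereas the paper asserts this without detail; your extra care here is justified and the argument is sound.
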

\newcommand{\Fc}{\hat{F}}
\newcommand{\Dc}{\hat{D}}
\newcommand{\Wc}{\hat{W}}
\newcommand{\Bc}{\hat{B}}
\newcommand{\Ec}{\hat{E}}

\ifsodaversion
\else

\begin{proof}
We recursively show that the family $\cB_W$ indeed defines a matroid $M_W$ for every $W\in \cL$. For
the singletons $\{v\}\in \cL$ it is clear that $M_{\{v\}}$ is the
uniform matroid of rank $k$ on ground set $\delta_{D^+}^{in}(v)$. Let $W\in
\cL$ be a non-singleton, and assume that $M_{W'}$ has already been
defined for every $W'\in \cL$ that is a proper subset of $W$. Let $W_1, W_2,
\dots , W_l$ be the maximal members of $\cL[W]-W$, and let us contract each
$W_i$ into a single node $w_i$ ($i=1,2,\dots,l$).  Let
 $\Wc = W/\{W_1, W_2, \dots , W_l\}$ be the set obtained
from $W$ by these contractions, and
similarly, for a subgraph $(W+s_W, E)$ of $D_W$ we use the
notation $\Ec = E/\{W_1, W_2, \dots , W_l\}$ to mean the graph
obtained from $(W+s_W, E)$ by the contractions (and deletion of the loops that arise).
In particular, let $\Dc=D_{W}/\{W_1, W_2,
\dots , W_l\}$. The matroids $M_{W_i}$ naturally give rise to matroids
$M_{w_i}=(\delta^{in}_{\Dc}(w_i), r_{w_i})$ for
every $i$; let $\cM=\{M_{w_1}, \dots, M_{w_l}\}$.
\begin{claim}
If $F\subseteq A(D_{W})$ is an $\cL[W]$-tight
\krarb[s_W], then $\Fc$ is
\cM-\indep. Conversely, if
$F'\subseteq \Dc$ is an \cM-\indep\  \krarb[s_W] in $\Dc$ and
$|\delta_{F'}^{out}(s_W)|=k$, then there exists an $\cL[W]$-tight
\krarb[s_W] $F\subseteq A(D_{W})$ such that $\Fc=F'$.
\end{claim}
\begin{proof}
The first statement is clear from the definition of the matroids
$M_{w_i}$. For the other direction, let $F'\subseteq \Dc$ be an
\cM-\indep\ \krarb[s_W] in $\Dc$, such that $|\delta_{F'}^{out}(s_W)|=k$.
Consider $F'$ as a
subgraph of $D_W$, and note that $\delta^{in}_{F'}(W_i)$ is a base of
$M_{W_i}$ for every $i$. By the definition of $M_{W_i}$,
$\delta^{in}_{F'}(W_i)$ can be extended to an $\cL[W_i]$-tight
arborescence $F_i$ in $D_{W_i}$ for every $i$. The \krarb[s_W] $F=F'\bigcup \cup_iF_i$ is $\cL[W]$-tight
and $\Fc= F'$, as required.
\end{proof}

The claim implies that $\cB_W$ consists of the arc sets of size $k$ that can be obtained as the arcs incident to $s_W$
of an \cM-\indep\ \krarb[s_W], so the statement of the theorem
follows from Corollary \ref{cor:indepmatroid}.
\end{proof}
\fi

\begin{cor}\label{cor:matr}
The matroids defined in Theorem \ref{thm:MF} have the property that
a $k$-arborescence $F\subseteq A(D^+)$ is \cL-tight if and only if $F\cap \delta^{in}_{D^+}(W)$ is a base of $M_W$ for every $W\in \cL$.
\end{cor}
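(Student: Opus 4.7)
The plan is to prove both directions by first extracting a structural consequence from the assumption $V\in \cL$: every $\cL$-tight $k$-arborescence $F$ in $D^+$ must be rooted at $s$. Indeed, $F[V]$ is a $k$-arborescence of $D=D^+[V]$ with $k(|V|-1)$ arcs, so the remaining $k$ arcs of $F$ must enter $V$ from $s$, forcing all $k$ component arborescences to have root $s$. Consequently $\varrho_F(v)=k$ for every $v\in V$, and for any $W\in \cL$ we get $|F\cap \delta^{in}_{D^+}(W)|=k$ by in-degree counting. Throughout I will identify $\delta^{in}_{D^+}(W)$ with $\delta^{out}_{D_W}(s_W)$ via the natural bijection.

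For the forward direction, assuming $F$ is $\cL$-tight, I will exhibit an explicit extension showing $F\cap \delta^{in}_{D^+}(W)\in \cB_W$. The candidate is $\tilde F:=(F\cap \delta^{in}_{D^+}(W))\cup F[W]\subseteq A(D_W)$, which has $k|W|$ arcs, in-degree $k$ at each $v\in W$, and in-degree $0$ at $s_W$. To see $\tilde F$ decomposes into $k$ spanning arborescences rooted at $s_W$, I will decompose $F[W]$ (a $k$-arborescence of $D[W]$ by $\cL$-tightness) into spanning arborescences $S_1,\dots,S_k$ with roots $v_1,\dots,v_k\in W$, and pair each arc of $F\cap \delta^{in}_{D^+}(W)$ having head $v$ with some $S_i$ rooted at $v$; this pairing exists because the multiplicity of $v$ in the root multiset equals $k-\varrho_{F[W]}(v)=\varrho_{F\cap \delta^{in}(W)}(v)$. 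The $\cL[W]$-tightness of $\tilde F$ then follows from that of $F$, since $\tilde F[W']=F[W']$ for every $W'\in \cL[W]$.

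For the reverse direction, assuming $F$ is a $k$-arborescence in $D^+$ with $F\cap \delta^{in}_{D^+}(W)\in \cB_W$ for each $W\in \cL$, I will only use the size-$k$ consequence of being a base. I will show directly that $F[W]$ is a $k$-arborescence of $D[W]$ for every $W\in \cL$. Counting gives $|F[W]|=k(|W|-1)$. To verify the structural condition, I plan to adjoin a super-root $s^\ast$ to $D[W]$ with $k-\varrho_{F[W]}(v)$ parallel arcs to each $v\in W$; then $F[W]$ is a $k$-arborescence of $D[W]$ iff $F[W]$ together with the $s^\ast$-arcs is a $k$-arborescence of the augmented digraph rooted at $s^\ast$. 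Since the augmented arc count is $k|W|$, Edmonds' disjoint arborescence theorem reduces this to showing $|F[Z]|\le k(|Z|-1)$ for every non-empty $Z\subseteq W$, which follows from $\varrho_F(Z)\ge k$ (since $F$ is a $k$-arborescence in $D^+$ rooted at $s\notin Z$) and $\sum_{v\in Z}\varrho_F(v)=k|Z|$ via $|F[Z]|=k|Z|-\varrho_F(Z)$.

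The main obstacle is organising the reverse direction cleanly: rather than building the spanning-arborescence decomposition of $F[W]$ by hand, it is much simpler to adjoin a super-root and invoke Edmonds' theorem, which reduces the entire structural requirement to a single cut inequality that is automatic for $k$-arborescences in $D^+$.
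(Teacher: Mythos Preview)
Your proof is correct. The paper gives no explicit argument for this corollary; it is meant to be immediate from the definition of $\cB_W$ in Theorem~\ref{thm:MF} together with the observation, already noted in Section~\ref{sec:variants}, that for an $s$-rooted $k$-arborescence $F$ with $\cL\subseteq 2^{V-s}$, being $\cL$-tight is equivalent to $\varrho_F(W)=k$ for every $W\in\cL$. With that equivalence in hand, the reverse direction becomes a one-liner: the base hypothesis gives $\varrho_F(W)=|F\cap\delta^{in}_{D^+}(W)|=k$ for every $W\in\cL$, and hence $F$ is $\cL$-tight. Your super-root construction together with Edmonds' theorem is a self-contained re-derivation of exactly that equivalence, which is perfectly valid but heavier than needed here. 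The forward direction you give (exhibiting $\tilde F=(F\cap\delta^{in}_{D^+}(W))\cup F[W]$ and matching root multiplicities) is precisely the unwinding of the definition of $\cB_W$, and is the natural argument.

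One small omission: in the reverse direction you write ``since $F$ is a $k$-arborescence in $D^+$ rooted at $s\notin Z$'' without having established $s$-rootedness for a general $F$ satisfying only the base hypothesis; your preamble derives $s$-rootedness only under the $\cL$-tightness assumption. The fix is immediate and implicit in what you already use: since all singletons belong to $\cL$, the base hypothesis for $W=\{v\}$ gives $\varrho_F(v)=k$ for every $v\in V$, so the root vector of $F$ is concentrated at $s$.
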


A recursive formula for the rank function $r_W$ of the matroid $M_W$ defined in Theorem \ref{thm:MF}
can be deduced from Corollary \ref{cor:indepmatroid}. We state this recursive formula expicitly below because it
will be used extensively. Let $W_1, \dots, W_l$ denote
the maximal members of $\cL[W]-W$. For an arc set $E\subseteq
\bigcup_{i=1}^l\delta^{in}_{D^+}(W_i)$, we use the notation
$r^\oplus_{W}(E)=\sum_{i=1}^l r_{W_i}(E\cap \delta^{in}_{D^+}(W_i))$.
A subset $X$ of $W$ is called $\cL[W]$-\textbf{compatible} if it is the union of some maximal members of $\cL[W]-W$.
A subpartition \cP\ of $W$ is $\cL[W]$-\textbf{compatible} if every member of \cP\ is $\cL[W]$-compatible.

\begin{cor} \label{cor:recursive}
Let $W\in \cL$ and $E\subseteq \delta^{in}_{D^+}(W)$. If $|W|=1$, then $r_W(E)=\min\{k, |E|\}$; otherwise
  \[r_W(E)= \min\{\sum_{X\in \cX} r_{W}^\oplus(\delta^{in}_{E\cup D[W]}(X)) - k(|\cX|-1): \cX \mbox{ is an $\cL[W]$-compatible subpartition of }W\}.\]
\end{cor}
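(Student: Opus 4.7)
The plan is to proceed by induction on $|W|$. The base case $|W|=1$ is immediate: as noted at the start of the proof of Theorem~\ref{thm:MF}, $M_{\{v\}}$ is the uniform matroid of rank $k$ on $\delta^{in}_{D^+}(v)$, whose rank function is $\min\{k,|E|\}$. For the inductive step with $|W|>1$, I will exploit the fact, established inside the proof of Theorem~\ref{thm:MF}, that $M_W$ is precisely the matroid produced by Corollary~\ref{cor:indepmatroid} applied to the contracted digraph $\Dc=D_W/\{W_1,\dots,W_l\}$ (where $W_1,\dots,W_l$ are the maximal members of $\cL[W]-W$) equipped with the matroids $M_{w_i}=(\delta^{in}_{\Dc}(w_i),r_{w_i})$ that come from the inductive hypothesis under the natural identification of $\delta^{in}_{\Dc}(w_i)$ with $\delta^{in}_{D^+}(W_i)$. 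Plugging into the rank formula of Corollary~\ref{cor:indepmatroid} immediately gives a minimum over subpartitions of $\Wc$, with the direct sum of the $r_{w_i}$ appearing in the summand.

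The second step is to translate subpartitions of $\Wc$ into $\cL[W]$-compatible subpartitions of $W$. Because every singleton of $V$ lies in $\cL$, every element of $W$ is contained in some maximal member of $\cL[W]-W$, so $\Wc$ consists exactly of the contracted nodes $w_1,\dots,w_l$. Expanding each $w_i$ back to $W_i$ therefore gives a bijection between subpartitions $\cY$ of $\Wc$ and subpartitions $\cX$ of $W$ whose blocks are unions of $W_i$'s, which are precisely the $\cL[W]$-compatible subpartitions of $W$.

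The final step is to check that under this bijection the arcs and the direct-sum rank functions match up. For $Y\in\cY$ with corresponding $X=\bigcup_{w_i\in Y}W_i\in\cX$, the contraction identifies $\delta^{in}_{E\cup\Dc[\Wc]}(Y)$ with $\delta^{in}_{E\cup D[W]}(X)$ as arc sets (arcs of $D[W]$ lying inside a single $W_i$ become loops in $\Dc$ and contribute to neither side), and the direct sum of the $r_{w_i}$ is by definition $r_W^\oplus$. Substituting these identifications yields exactly the stated formula. The main obstacle is really only this careful bookkeeping of arcs under contraction and of sets under the bijection; there is no substantive combinatorial difficulty once the setup of Theorem~\ref{thm:MF} is in place and Corollary~\ref{cor:indepmatroid} is invoked.
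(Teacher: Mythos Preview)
Your proposal is correct and is exactly the argument the paper has in mind: the text preceding Corollary~\ref{cor:recursive} simply states that the formula ``can be deduced from Corollary~\ref{cor:indepmatroid}'' via the construction in the proof of Theorem~\ref{thm:MF}, and your write-up carries out precisely that deduction, including the bijection between subpartitions of $\Wc$ and $\cL[W]$-compatible subpartitions of $W$ (which relies on the standing assumption that all singletons lie in $\cL$) and the identification of $r^\oplus$ for $\Dc$ with $r_W^\oplus$. One small remark: framing the argument as an induction on $|W|$ is harmless but not strictly necessary for the rank formula itself, since Theorem~\ref{thm:MF} already guarantees that every $M_{W_i}$ is a matroid; the only place induction is genuinely used is inside the proof of Theorem~\ref{thm:MF}.
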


Theorem \ref{thm:MF} for $W=V$ gives the following corollary.
\begin{cor}
The convex hull of root vectors of \cL-tight \karb{s} in $D$ is a base polyhedron.
\end{cor}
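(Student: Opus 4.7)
The plan is to derive this corollary from Theorem~\ref{thm:MF} applied with $W=V$ by aggregating coordinates along a natural partition. Since $V+s-V=\{s\}$, we have $D_V=D^+$ and $s_V=s$, so Theorem~\ref{thm:MF} supplies a matroid $M_V=(\delta^{out}_{D^+}(s),r_V)$ whose bases $\cB_V$ are precisely the $k$-element subsets extending to an $\cL$-tight \krarb\ in $D^+$; its base polyhedron $B(M_V)$ is the convex hull of the incidence vectors of $\cB_V$.

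Next, I partition $\delta^{out}_{D^+}(s)=\bigsqcup_{v\in V}E_v$, where $E_v$ is the set of $\alpha$ parallel arcs from $s$ to $v$, and consider the linear aggregation $\phi:\Rset^{\delta^{out}_{D^+}(s)}\to\Rset^V$ defined by $\phi(x)_v=x(E_v)$. Every \karb\ $F$ in $D$ with root vector $q\in\Zset_+^V$ extends to an $s$-rooted \karb\ $F^+$ in $D^+$ by picking any $q(v)$ arcs from $E_v$ for each $v$; and since $V\in\cL$ and $F^+[W]=F[W]$ for every $W\in\cL$, such an $F^+$ is $\cL$-tight in $D^+$ if and only if $F$ is $\cL$-tight in $D$. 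Consequently the set of root vectors of $\cL$-tight \karb s in $D$ is exactly $\{\phi(\chi_I):I\in\cB_V\}$, and hence its convex hull equals $\phi(B(M_V))$.

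Define $h:2^V\to\Zset_+$ by $h(Z)=r_V(\bigcup_{v\in Z}E_v)$. Disjointness of the $E_v$'s, together with the submodularity of $r_V$, makes $h$ submodular; it is also monotone with $h(\emptyset)=0$ and $h(V)=k$, so $h$ is the rank function of a polymatroid on $V$, with an associated base polyhedron $B(h)$. It remains to show $\phi(B(M_V))=B(h)$. The inclusion $\phi(B(M_V))\subseteq B(h)$ is immediate from $\phi(x)(Z)=x(\bigcup_{v\in Z}E_v)$ together with the defining inequalities of $B(M_V)$. The reverse inclusion is the main obstacle: I would take a vertex $y$ of $B(h)$, realize it via the greedy algorithm for some linear ordering $v_1,\dots,v_n$ of $V$, and then lift this to a greedy ordering of $\delta^{out}_{D^+}(s)$ (enumerating $E_{v_1},E_{v_2},\dots$ with arbitrary internal order), so that the resulting greedy $x\in B(M_V)$ satisfies $x(E_{v_i})=r_V(\bigcup_{j\le i}E_{v_j})-r_V(\bigcup_{j<i}E_{v_j})=y_{v_i}$. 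This is the well-known fact that aggregating a (poly)matroid base polyhedron along a partition yields a base polyhedron, from which the corollary follows.
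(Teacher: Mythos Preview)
Your proof is correct and follows the approach the paper has in mind: apply Theorem~\ref{thm:MF} with $W=V$ to obtain the matroid $M_V$ on $\delta^{out}_{D^+}(s)$, then pass to root vectors by aggregating along the parallel classes $E_v$. The paper states the corollary without proof, treating this aggregation step as routine; you have spelled it out carefully, and the greedy lifting argument for $B(h)\subseteq\phi(B(M_V))$ is the standard way to verify that the linear image of a matroid base polyhedron under a partition-sum map is again a base polyhedron.
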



Theorem \ref{thm:MF} in itself does not imply that the root vectors of minimum-cost $k$-arborescences
also determine a base polyhedron, because we have to deal with mandatory arcs,
i.e.\ the arcs of $A_1$ in Corollary \ref{cor:slackness}.
\ifsodaversion
However, this problem can be handled and we can prove the following theorem (see the details in the Appendix).
\else
The following transformation solves this issue.

\newcommand{\mandconstr}{{\sc mandatory arc transformation}}
\begin{figure}
\begin{center}
\input{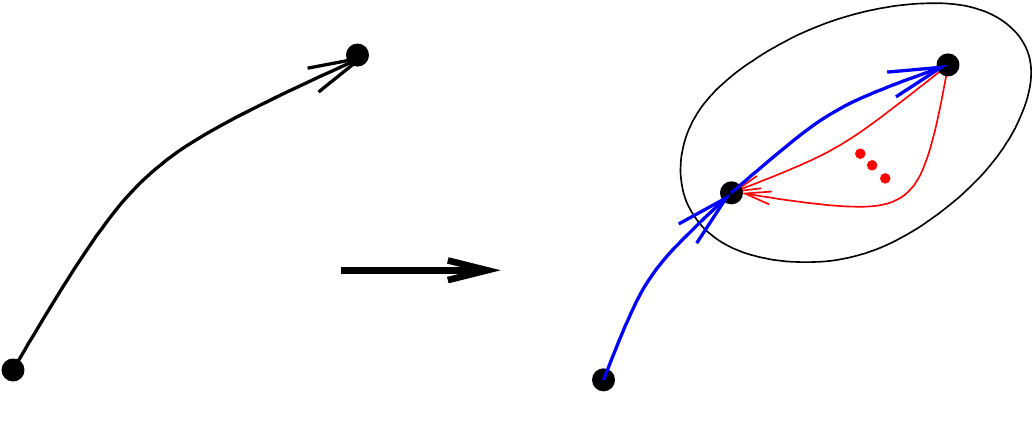_t}
\caption{An illustration for the \mandconstr.}
\label{fig:mand}
\end{center}
\end{figure}

\paragraph{\mandconstr} Given a digraph $D=(V,A)$, a node $s\in V$, an
arc $a=uv$ (where $u,v\in V-s$), and a laminar family $\cL\subseteq
2^{V-s}$, we construct a digraph $D'=(V+x_a, A-a+B_a)$, where
$B_a=\{ux_a, x_av\}\cup\{k-1$ parallel copies of $vx_a\}$.  Let
furthermore $\cL'\subseteq 2^{V+x_a}$ be defined as $\cL'=\{W\in \cL:
v\not\in W\}\cup\{W+x_a: v\in W\in \cL\}$ (note that $\{v\}\in \cL$ implies that $\{x_a,v\}\in \cL'$).
See Figure \ref{fig:mand} for an illustration. It is
easy to check that $\cL'$ is laminar.

\begin{claim}\label{cl:phi}
For an \cL-tight \krarb\ $F\subseteq A$ containing $a=uv$, let
$\phi(F) = F-a+B_a$. Then $\phi$ is a bijection between \cL-tight
\krarb{s} containing $a$ in $D$ and $\cL'$-tight \krarb{s} in $D'$.
\end{claim}
\begin{proof}
First we show that if $F\subseteq A$ is an \cL-tight \krarb\  containing $a$, then $\phi(F)$
is an $\cL'$-tight \krarb\ in $D'$. Let $F_1, F_2,
\dots, F_k$ be a decomposition of $F$ into $k$ $s$-rooted
arborescences and assume that $a\in F_1$. Let
$F_1'=F_1-a+\{ux_a,x_av\}$, and let $F_i'=F_i$ plus a copy of the arc
$vx_a$ for every $i=2,\dots, k$. Then $F_1', F_2', \dots, F_k'$ is a
decomposition of $\phi(F)$ into $k$ $s$-rooted arborescences in $D'$,
so $\phi(F)$ is indeed an \krarb\ in $D'$. Furthermore, $\phi(F)$
is $\cL'$-tight, as $\phi(F)[\{x_a,v\}]$ is a \karb, and the indegree
of any other set $W\in \cL'$ in the subgraph $\phi(F)$ is $k$.

For the other direction, let $F'\subseteq A'$ be an arbitrary
$\cL'$-tight \krarb\ in $D'$. Since $F'[\{x_a,v\}]$ is a \karb\ and
$\varrho_{F'}(x_a)=k$, $B_a\subseteq F'$ must hold. Let
$F=F'-B_a+a$; we show that $F$ is a \cL-tight \krarb\ in $D$ --
since $a\in F$ and $F'=\phi(F)$, this completes the proof. Let $F_1',
F_2', \dots, F_k'$ be a decomposition of $F'$ into $k$ $s$-rooted
arborescences in $D'$, and assume that $ux_a\in F_1'$. Then clearly
$x_av$ is in $F_1'$ too, so $F_1'-\{ux_a,x_av\}+a, F_2'-vx_a, F_3'-vx_a,
\dots, F_k'-vx_a$ is a decomposition of $F$ into $k$ $s$-rooted
arborescences in $D$. The \cL-tightness of $F$ can be shown similarly.
\end{proof}

Using this transformation we can now prove the following.
\fi

\begin{thm}\label{thm:optimalroot}
The convex hull of the root vectors of optimal $k$-arborescences is a
base polyhedron.
\end{thm}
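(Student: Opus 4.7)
The plan is to reduce the statement to the $\cL$-tight case (handled by the corollary immediately preceding this theorem) via the mandatory arc transformation.

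First I would apply Theorem \ref{thm:versions} to reformulate the problem in the $(\alpha,\beta)$-extension $D^+$: an optimal $k$-arborescence $F$ of $D$ corresponds to an optimal $s$-rooted $k$-arborescence $F^+$ of $D^+$, and the root vector $q\in\Zset^V_+$ of $F$ is recovered as the vector whose $v$-coordinate equals $|F^+\cap\delta^{out}_{D^+}(s)\cap\delta^{in}_{D^+}(v)|$. Next, Corollary \ref{cor:slackness} supplies disjoint sets $A_0,A_1\subseteq A(D^+)$ and a laminar family $\cL\subseteq 2^V$ such that the optimal $s$-rooted $k$-arborescences in $D^+$ are precisely the \cL-tight ones contained in $A(D^+)-A_0$ that contain $A_1$. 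I would then apply the mandatory arc transformation once for each arc of $A_1$, producing a digraph $D'$ on the enlarged node set $V(D')=V\cup\{s\}\cup\{x_a:a\in A_1\}$ together with a laminar family $\cL'\subseteq 2^{V(D')-s}$ which, without loss of generality, contains every singleton. Iterating Claim \ref{cl:phi} yields a bijection $\Phi$ between the optimal $s$-rooted $k$-arborescences of $D^+$ and the $\cL'$-tight $s$-rooted $k$-arborescences of $D'-A_0$; since the mandatory arc transformation leaves every arc leaving $s$ untouched, $\Phi$ preserves $F^+\cap\delta^{out}(s)$, and therefore preserves the root vector on $V$ obtained by head-aggregation.

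Applying the corollary preceding this theorem to $D'-A_0$ together with $\cL'$ (adjoining $V(D')-s$ to $\cL'$ if needed), the convex hull $B'$ of root vectors of $\cL'$-tight $k$-arborescences of $D'-A_0$ is a base polyhedron in $\Rset^{V(D')-s}$. Because $\{x_a\}\in\cL'$ and $\varrho_{D'}(x_a)=k$, every such $k$-arborescence must use all $k$ arcs entering $x_a$, forcing $q'(x_a)=0$. Hence $B'$ lies inside the subspace $\{q'\in\Rset^{V(D')-s}:q'(x_a)=0$ for every $a\in A_1\}$, and the convex hull of root vectors of optimal $k$-arborescences of $D$ is the image of $B'$ under dropping the $x_a$-coordinates.

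The remaining step, which I regard as the main technical obstacle, is to deduce that this image is itself a base polyhedron in $\Rset^V$. Let $p^\wedge$ be the fully supermodular function supplied by Theorem \ref{thm:wedge} with $B'=B(p^\wedge)$. Using the base polyhedron identity $p^\wedge(T)=\min\{q'(T):q'\in B(p^\wedge)\}$ together with the forced equalities $q'(x_a)=0$, one obtains $p^\wedge(T\cup\{x_a\})=p^\wedge(T)$ for every $T\subseteq V(D')-s$ and every $a\in A_1$. Hence $p^\wedge$ depends only on the $V$-part of its argument; in particular $p^\wedge|_{2^V}$ is fully supermodular, and a direct check shows that the projection of $B(p^\wedge)$ onto $\Rset^V$ coincides with $B(p^\wedge|_{2^V})$, which is a base polyhedron. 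This completes the proof.
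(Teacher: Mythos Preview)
Your overall plan---extension to $D^+$, Corollary~\ref{cor:slackness}, the mandatory arc transformation, and Claim~\ref{cl:phi}---is exactly the paper's. The divergence is in the final step, and there the argument has a genuine gap.

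You invoke the root-vector corollary to obtain a base polyhedron $B'\subseteq\Rset^{V'}$ with $V'=V\cup\{x_a:a\in A_1\}$, and then claim that every point of $B'$ satisfies $q'(x_a)=0$. The justification ``$\{x_a\}\in\cL'$ and $\varrho_{D'}(x_a)=k$'' does not force this: singleton tightness is vacuous, and nothing prevents $x_a$ from being the root of one of the $k$ arborescences. Concretely, take $k=2$, $V=\{u,v\}$, $A=\{uv,\ vu,\ vu\}$, $\cL$ the full laminar family, and $A_1=\{uv\}$. After the transformation, the digraph on $V'=\{u,v,x_a\}$ admits the $\cL'$-tight $2$-arborescence $\{x_av,\ vx_a,\ vu,\ vu\}$, whose root vector is $(q'(u),q'(v),q'(x_a))=(0,1,1)$. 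This $2$-arborescence does \emph{not} contain $ux_a$, hence is not in the image of $\Phi$ and corresponds to no optimal $2$-arborescence of $D$. Thus $B'$ is strictly larger than the lift of the target polyhedron, and the projection step does not recover the right object.

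The paper avoids this by not passing through the root-vector corollary on $V'$. Instead it applies Corollary~\ref{cor:matr} in $D'$ (keeping $s$): the family $\{I\subseteq\delta^{out}_{D'}(s):|I|=k$ and $I$ extends to an $\cL'$-tight $s$-rooted $k$-arborescence$\}$ is the base family of a matroid on $\delta^{out}_{D'}(s)$. The key point is that the mandatory arc transformation adds no arcs out of $s$, so every arc of $\delta^{out}_{D'}(s)$ still has its head in $V$; aggregating the matroid base polytope by head therefore lands directly in $\Rset^V$, and via $\Phi$ this is precisely the convex hull of root vectors of optimal $k$-arborescences of $D$. No projection is needed, and the spurious $x_a$-rooted arborescences never enter. (A secondary omission: you also need the one-line argument that $A_1$ contains no arc leaving $s$---the paper uses $\alpha=k+1$ and symmetry of parallel $sv$-arcs---since the mandatory arc transformation is only defined for arcs with both endpoints in $V$.)
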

\ifsodaversion
\else
\begin{proof}
Given a digraph $D=(V, A)$ and a cost function $c:A\to \Rset$, let $\alpha=k+1$,
$\beta=\sum_{a\in A}c(a)+1$, and let $(D^+,c^+)$ be the $(\alpha,\beta)$-extension of $(D,c)$.
By previous remarks, optimal \karb{s} in $D$ and optimal \karb{s} in $D^+$
correspond to each other in a natural way (and \karb{s} in $D^+$ are
rooted at $s$).  By Corollary \ref{cor:slackness}, there exists a laminar family
$\cL\subseteq 2^V$ and two disjoint sets $A_0, A_1\subseteq A^+$, such
that a \karb\ $F\subseteq A^+$ is
optimal if and only if $A_1\subseteq F\subseteq A^+-A_0$ and $F$ is
$\cL$-tight. Due to symmetry, $A_1$ contains either all or none of
the parallel arcs between $s$ and a given node $v\in V$. Since there are $k+1$
parallel arcs, the former is impossible, so $A_1\subseteq A$.

Starting with $D^+-A_0$, repeat the \mandconstr\  above for
every $a\in A_1$, to obtain $D'=(V+s+\{x_a:a\in A_1\}, A^+-(A_0\cup
A_1)+\cup_{a\in A_1}B_a))$ and the laminar family $\cL'\subseteq
2^{V+\{x_a:a\in A_1\}}$. For any $\cL$-tight \krarb\ $F\subseteq A^+$
with $A_1\subseteq F\subseteq A^+-A_0$, let $\phi(F)=F-A_1+\cup_{a\in
  A_1}B_a$. By Claim \ref{cl:phi}, $\phi$ defines a bijection between
$\cL$-tight \krarb{s} in $D^+-A_0$ containing $A_1$ and $\cL'$-tight
\krarb{s} in $D'$. By Corollary \ref{cor:matr}, the family
$\{I\subseteq \delta^{out}_{D'}(s): |I|=k$ and $I$ is contained in a   $\cL'$-tight
\krarb\ of $D'\}$ is the family of bases of a matroid. This implies that the convex hull of root vectors of
optimal $k$-arborescences in $D$ is a base polyhedron.
\end{proof}
\fi

\section{Blocking \cL-tight \karb{s}}\label{sec:blocking}

In this section we show that if $k$ is fixed, then there is a polynomial-time algorithm that finds a minimum transversal
of the family of $\cL$-tight \karb s.
Let $D=(V,A)$ be a digraph and let $\cL\subseteq 2^V$ be a laminar family.
We assume that $\cL$ contains $V$ and all the singletons, and that $D$ contains an \cL-tight \karb . Let
$D^+$ be the $\alpha$-extension of $D$, where $\alpha=|A|+k$. The minimum transversals
for $D$ and $D^+$ are the same because the arcs $sv$ have $|A|+k$
copies each, so these arcs never appear in a minimum transversal.
Recall that for $W\in \cL$, the digraph $D_W$ is obtained by contracting $V+s-W$ in $D^+$ to a single root node $s_W$.

In what follows, we will often use the matroids $M_W=(\delta^{in}_{D^+}(W),
r_W)$ for $W\in \cL$, as defined in Theorem \ref{thm:MF}. Furthermore,
we will often remove some subset of arcs $H\subseteq A$ from $D^+$
and we will usually denote $D^+-H$ by $D'$. Thus $D'_W$ for some $W\in \cL$ will denote the digraph obtained
from $D^+-H$ by contracting $V+s-W$ into a single node $s_W$. If $D'_W$ contains an
$\cL[W]$-tight \krarb[s_W] for some $W\in \cL$, then we can consider the modified matroid obtained by using $D'_W$ in place of $D_W$ in
Theorem \ref{thm:MF}. To emphasize the dependence of this matroid on $D'$, we denote it
by $M_{D',W}$, and its rank function by $r_{D', W}$. Likewise, we use the notation $r_{D', W}^\oplus(E)$ in place of
$r_{W}^\oplus(E)$ if we refer to the direct sum defined using $D'$.

For a non-singleton $W\in \cL$ and an arc set $E \subseteq \delta^{in}_{D^+}(W)$, we say that
an $\cL[W]$-compatible subpartition \cX\ of $W$ \textbf{determines} $r_W(E)$ if
$r_W(E)= \sum_{X\in \cX} r_{W}^\oplus(\delta^{in}_{E\cup D[W]}(X)) - k(|\cX|-1)$. By
Corollary \ref{cor:recursive}, such a subpartition exists.
Notice that if $\cX$ determines $r_W(E)$, then $r_{W}^\oplus(\delta^{in}_{E\cup D[W]}(X))\le k$ for every $X\in \cX$.
Moreover, if $r_{W}^\oplus(\delta^{in}_{E\cup D[W]}(X))=k$ for some $X\in \cX$, then $\cX-X$ also determines $r_W(E)$.
In particular, if $r_W(E)=k$, then $r_W(E)$ is determined by the empty subpartition.

Our first lemma shows that  the rank of an arc set cannot decrease by more than one if we remove only one arc from $D$.
\ifsodaversion
The proof of Lemma \ref{lem:rank1} can be found in the Appendix.
\fi

\begin{lemma}\label{lem:rank1}
Let $E \subseteq \delta^{in}_{D^+}(W)$, and let $D'=D^+-e$ for an arbitrary arc $e \in D_W$ (not necessarily in $E$).
If $D'_W$ contains an $\cL[W]$-tight \krarb[s_W], then
\[r_W(E)-1 \leq r_{D',W}(E-e)\leq r_W(E).\]
\end{lemma}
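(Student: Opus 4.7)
I will prove the two inequalities separately. The upper bound is direct: every base of $M_{D',W}$ has the form $F \cap \delta^{in}_{D^+}(W)$ for some $\cL[W]$-tight \krarb[s_W] $F$ in $D'_W$, and since $D'_W$ is a subdigraph of $D_W$, such an $F$ is also an $\cL[W]$-tight \krarb[s_W] in $D_W$, hence a base of $M_W$ as well. Therefore any $M_{D',W}$-independent subset of $E-e$ is also $M_W$-independent, which gives $r_{D',W}(E-e) \le r_W(E-e) \le r_W(E)$.

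For the lower bound $r_{D',W}(E-e) \ge r_W(E) - 1$, I argue by induction on $|W|$ via the recursive rank formula in Corollary \ref{cor:recursive}. If $|W|=1$, then $r_W(E) = \min\{k,|E|\}$ and $r_{D',W}(E-e) = \min\{k,|E-e|\}$, and the bound follows from $|E-e| \ge |E|-1$.

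For the inductive step, let $W_1,\dots,W_l$ be the maximal members of $\cL[W]-W$, which partition $W$ because $\cL$ contains every singleton. If the head of $e$ lies outside $W$, the removal of $e$ affects neither $D[W]$ nor $\delta^{in}_{D^+}(W)$ and there is nothing to prove, so let $W_{i_0}$ be the unique $W_j$ containing the head of $e$. The decisive observation is that for $j \ne i_0$ the arc $e$ is neither in $\delta^{in}_{D^+}(W_j)$ nor in $D[W_j]$, so $M_{D',W_j} = M_{W_j}$ and the arc sets appearing in the recursive formula at $W_j$ are untouched; only the recursion at $W_{i_0}$ is affected. The inductive hypothesis is applicable at $W_{i_0}$, since $D'_{W_{i_0}}$ inherits an $\cL[W_{i_0}]$-tight \krarb[s_{W_{i_0}}] by restricting the given $\cL[W]$-tight \krarb in $D'_W$ to $D'[W_{i_0}]$ and attaching $k$ appropriate arcs at $s_{W_{i_0}}$ (which are abundant thanks to the $\alpha$-extension).

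Now pick an $\cL[W]$-compatible subpartition $\cX^*$ attaining the minimum in Corollary \ref{cor:recursive} for $r_{D',W}(E-e)$. Expanding $r_{D',W}^\oplus$ and $r_W^\oplus$ as direct sums over the maximal $W_i \subseteq X$ and applying the inductive hypothesis at $W_{i_0}$ to $E' := \delta^{in}_{E\cup D[W]}(X) \cap \delta^{in}_{D^+}(W_{i_0})$ gives, for every $X \in \cX^*$,
\[r_{D',W}^\oplus(\delta^{in}_{(E-e)\cup D'[W]}(X)) \ge r_W^\oplus(\delta^{in}_{E\cup D[W]}(X)) - \mathbf{1}[W_{i_0}\subseteq X].\]
Since $W_{i_0}$ lies in at most one member of the disjoint subpartition $\cX^*$, summing loses at most $1$ in total; using $\cX^*$ as a feasible choice in the recursive formula for $r_W(E)$ then yields $r_{D',W}(E-e) \ge r_W(E) - 1$. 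The main obstacle is the careful local analysis that isolates the single ``affected'' index $i_0$ and verifies that $M_{D',W_j}=M_{W_j}$ for all other $j$, together with the check that the inductive hypothesis is actually available at $W_{i_0}$.
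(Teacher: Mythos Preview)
Your proof is correct and follows essentially the same approach as the paper's: both argue the upper bound from the fact that $D'_W$ is a subdigraph of $D_W$, and both prove the lower bound by induction, taking an $\cL[W]$-compatible subpartition $\cX$ that determines $r_{D',W}(E-e)$, observing that the removal of $e$ affects at most one summand $r_{W_i}$ (namely the one with $W_i$ containing the head of $e$), and then using $\cX$ as a feasible choice in the recursive formula for $r_W(E)$. Your write-up is more explicit than the paper's in verifying that the inductive hypothesis is available at $W_{i_0}$ and in handling the case where the head of $e$ lies outside $W$, but the underlying argument is the same.
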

\ifsodaversion
\else
\begin{proof}
Let $E'=E-e$. The inequalities $r_{D',W}(E')\leq r_W(E')\leq r_W(E)$ follow from the definition of the rank. We prove the remaining
inequality by induction on the size of $\cL[W]$; it is clearly true if $W$ is a singleton.
Otherwise, by Corollary \ref{cor:recursive}, there is an $\cL[W]$-compatible subpartition $\cX$ of $W$ that determines $r_{D',W}(E')$, i.e.\
$r_{D',W}(E')=\sum_{X \in \cX} r_{D', W}^\oplus (\delta^{in}_{E' \cup D'[W]}(X))- k(|\cX|-1)$.
We know by induction that
$r_{D', W}^\oplus (\delta^{in}_{E' \cup D'[W]}(X))\geq r_W^\oplus (\delta^{in}_{E \cup D[W]}(X))-1$ for every $X \in \cX$,
and the ranks are different for at most one member of $\cX$,
since $e\in D_{W_i}$ for at most one $W_i$. This proves the inequality because
$r_W(E) \leq \sum_{X \in \cX} r_W^\oplus (\delta^{in}_{E \cup D[W]}(X))- k(|\cX|-1)$.
\end{proof}
\fi

The next result is a characterization of inclusionwise minimal transversals lying inside $A$.
\ifsodaversion
The proof of Theorem \ref{thm:minarcset} can be found in the Appendix.
\fi

\begin{thm}\label{thm:minarcset}
 Let $H \subseteq A$ be an inclusionwise minimal transversal of the family of $\cL$-tight \karb s in $D^+$.
 Let $D'=D^+-H$ and  let $W \in \cL$ be an inclusionwise minimal member of $\cL$ for which $D'_{W}$
 does not contain an $\cL[W]$-tight \krarb[s_{W}].
Then $H\subseteq D[W]$, and there is an $\cL[W]$-compatible subpartition $\cX$ of $W$ such that
$\sum_{X \in \cX} r_{D', W}^\oplus (\delta^{in}_{D'[W]}(X))= k(|\cX|-1)-1$.
\end{thm}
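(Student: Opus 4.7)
The plan is to pinpoint which of the two conditions in Corollary \ref{cor:indepmatroid} must fail at the minimal bad set $W$, use that failure to obtain a candidate subpartition, and then apply the minimality of $H$ twice---first to locate the arcs of $H$ inside $D[W]$, then to nail down the exact value of the sum.

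First I would verify that condition (a) of Corollary \ref{cor:indepmatroid} automatically holds for $D'_W$: since $H \subseteq A$ misses the $|A|+k$ parallel extension arcs from $s$ to each $v \in V$, for every maximal $W_i \subseteq X \subseteq W$ in $\cL$ the set $\delta^{in}_{D'_W}(X)\cap \delta^{in}_{D^+}(W_i)$ still contains all extension arcs into each $v \in W_i$. A short induction on $|W_i|$ via the formula of Corollary \ref{cor:recursive} then forces $r_{D',W_i}$ of this set to equal $k$, because every $\cL[W_i]$-compatible subpartition $\cY$ yields a value of at least $k$: the inductive hypothesis replaces each $r^\oplus$-summand by $k$, giving $k|J|-k(|\cY|-1)=k(|J|-|\cY|+1) \ge k$, where $J$ counts the maximal sub-components inside $\bigcup\cY$. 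Summing over the components of $X$ gives $r^\oplus_{D',W}(\delta^{in}_{D'_W}(X))\ge k$. As $D'_W$ contains no $\cL[W]$-tight \krarb[s_W], condition (b) must then fail, yielding an $\cL[W]$-compatible subpartition $\cX^*$ of $W$ with
\[\sum_{X \in \cX^*} r^\oplus_{D',W}(\delta^{in}_{D'[W]}(X)) \leq k(|\cX^*|-1)-1.\]

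To show $H \subseteq D[W]$, I would suppose some $h \in H$ has an endpoint outside $W$. Then $h \notin D[W']$ for every $W' \subseteq W$ in $\cL$, so $(D'+h)[W']=D'[W']$, and an induction along the recursive formula (base case: singleton matroids depend only on $\min(k,|E|)$) shows $r_{D'+h,W'}(E)=r_{D',W'}(E)$ for every $E \subseteq \delta^{in}_{D'}(W')$. Consequently $r^\oplus_{D'+h,W}(\delta^{in}_{(D'+h)[W]}(X))=r^\oplus_{D',W}(\delta^{in}_{D'[W]}(X))$ for every $X \in \cX^*$, so $(D'+h)_W$ still violates condition (b) at $\cX^*$ and therefore has no $\cL[W]$-tight \krarb[s_W]. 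Since any $\cL$-tight \karb\ in $D^+-(H-h)$ would, by restriction, produce an $\cL[W]$-tight \krarb[s_W] in $(D'+h)_W$, this forces $H-h$ to be a transversal and contradicts minimality of $H$. This inductive ``insensitivity'' of the matroids $M_{D',W'}$ to arcs outside $D[W]$, built on Corollary \ref{cor:recursive} and Lemma \ref{lem:rank1}, is the main technical step.

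Finally, for the equality, I would invoke minimality of $H$ in the opposite direction: for any $h \in H$, $D'+h$ contains an $\cL$-tight \karb, so $(D'+h)_W$ contains an $\cL[W]$-tight \krarb[s_W] and condition (b) at $\cX^*$ gives $\sum_{X \in \cX^*}r^\oplus_{D'+h,W}(\delta^{in}_{(D'+h)[W]}(X)) \geq k(|\cX^*|-1)$. Since $h \in D[W]$, it belongs to $D[W_l]$ for at most one maximal $W_l \in \cL[W]-W$ and is an in-arc of $W_l$ for at most one $l$, so Lemma \ref{lem:rank1} permits only one rank $r_{D',W_l}(\cdot)$ to change, and by at most $1$. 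As $\cX^*$ is a subpartition, this $W_l$ lies in at most one $X \in \cX^*$, so passing from $D'$ to $D'+h$ raises the total sum by at most $1$. Combined with the earlier upper bound this forces $\sum_{X \in \cX^*}r^\oplus_{D',W}(\delta^{in}_{D'[W]}(X))=k(|\cX^*|-1)-1$.
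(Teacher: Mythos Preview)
Your proof is correct and follows essentially the same route as the paper's: both identify via Corollary \ref{cor:indepmatroid} that condition (b) must fail at the minimal bad $W$, both derive $H\subseteq D[W]$ from the observation that the relevant rank sums depend only on arcs inside $D[W]$, and both pin down the exact value $k(|\cX|-1)-1$ by combining minimality of $H$ with the one-arc rank change bound of Lemma \ref{lem:rank1}. The only differences are cosmetic: the paper dispatches condition (a) by the one-line remark that the undeleted extension arcs from $s$ already span rank $k$ in each $M_{D',W_i}$, whereas you unfold this into an induction through the recursive formula; and the paper first notes $H\subseteq D_W$ and only refines to $H\subseteq D[W]$ after obtaining the subpartition, while you go straight to $D[W]$.
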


\ifsodaversion
\else

\begin{proof}
First note that $|W|>1$, since $H\subseteq A$. As $H \cap D_W$ is a transversal of $\cL[W]$-tight \krarb[s_{W}]s
(and hence of $\cL$-tight \karb s), minimality of $H$ implies that $H\subseteq D_W$.
Let $W_1, \dots, W_l$ be the maximal members of $\cL[W]-W$. By the choice of $W$, $D'_{W_i}$ contains an
$\cL[W_i]$-tight \krarb[s_{W_i}] for every $i$, thus $r_{D', W}^\oplus$ is well-defined.

Since $D'_{W}$ does not contain an $\cL[W]$-tight \krarb[s_{W}],
\eqref{it:Pnempty} or \eqref{it:Bnempty} fails to hold in Corollary \ref{cor:indepmatroid} for $r_{D', W}^\oplus$.
Suppose that $r_{D', W}^\oplus(\delta^{in}_{D'_W}(X))<k$ for some $\cL[W]$-compatible subset
$X \subseteq W$. Then there is a set $W_i$ such that
$r_{D',W_i}(\delta^{in}_{D'}(W)\cap \delta^{in}_{D'}(W_i))<k$. However, since we did not delete any arc leaving $s$, and already the arcs going from $s$ to $W_i$ have rank $k$ in $M_{W_i}$, we get (by monotonicity of $r_{W_i}$) that $r_{W_i}(\delta^{in}_{D'}(W)\cap \delta^{in}_{D'}(W_i))=k$, a contradiction.


Thus \eqref{it:Bnempty} fails to hold in Corollary \ref{cor:indepmatroid}, that is,
$\sum_{X \in \cX} r_{D', W}^\oplus (\delta^{in}_{D'[W]}(X))< k(|\cX|-1)$ for some
$\cL[W]$-compatible subpartition $\cX$ of $W$. As $H$ is inclusionwise minimal and the removal of an arc can
decrease a rank by at most one according to Lemma \ref{lem:rank1}, the left hand side must be equal to
$k(|\cX|-1)-1$. Since the formula involves only arcs in $D'[W]$, minimality also implies that $H \subseteq D[W]$.
\end{proof}
\fi


The characterization in the theorem does not lead automatically to an efficient algorithm for finding a transversal
of minimum size. In fact, for a given $X$ with $r_{W}^\oplus (\delta^{in}_{D[W]}(X))=k$, it is not clear how to compute
the minimmum number of arcs that have to be removed in order to decrease the rank by one.
However, the following lemma implies that if the rank is strictly smaller than $k$, then we can decrease it
by removing only one arc.
\ifsodaversion
The proof of Lemma \ref{lem:decrease_rank} can be found in the Appendix.
\fi

\begin{lemma}\label{lem:decrease_rank}
 Let $W \in \cL$ and $E\subseteq \delta^{in}_{D^+}(W)$ such that $0<r_W(E)<k$. Then there exists an arc
 $e\in E\cup D[W]$  such that either $D_W-e$ does not contain an $\cL[W]$-tight \krarb[s_{W}],
 or $r_{D',W}(E-e)=r_W(E)-1$, where $D' = D^+-e$.
\end{lemma}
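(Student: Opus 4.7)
The plan is to prove this by induction on the depth of $\cL[W]$, i.e.\ on the size of $|\cL[W]|$. The key observation is that the determining subpartition structure from Corollary \ref{cor:recursive} lets us localize the rank deficit to one of the maximal children of $W$, and then invoke induction.

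For the base case $|W|=1$, the matroid $M_W$ is uniform of rank $k$, so $0<r_W(E)<k$ means $0<|E|<k$, and any $e\in E$ decreases $r_W(E)$ by one (and doesn't destroy the trivial $\cL[\{v\}]$-tight \krarb[s_W] structure). For the inductive step, with $|W|>1$, I would first pick an $\cL[W]$-compatible subpartition $\cX$ determining $r_W(E)$, and use the observation recorded in the excerpt to drop from $\cX$ any $X$ with $r_W^\oplus(\delta^{in}_{E\cup D[W]}(X))=k$; so WLOG every $X\in \cX$ satisfies $r_W^\oplus(\delta^{in}_{E\cup D[W]}(X))<k$. Then since $r_W(E)>0$, the empty subpartition cannot be determining ($\cX=\emptyset$ forces $r_W(E)=k$), so $\cX\neq \emptyset$, and $\sum_{X\in \cX} r_W^\oplus(\delta^{in}_{E\cup D[W]}(X))=r_W(E)+k(|\cX|-1)>0$ forces some $X\in \cX$ to have positive contribution.

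For such an $X$, write $r_W^\oplus(\delta^{in}_{E\cup D[W]}(X))=\sum_j r_{W_j}(E_j)$ where the $W_j$'s are the maximal members of $\cL[W]-W$ contained in $X$ and $E_j=\delta^{in}_{E\cup D[W]}(X)\cap \delta^{in}_{D^+}(W_j)$. Since the sum lies strictly between $0$ and $k$ and each summand lies in $[0,k]$, there must be a $W_j$ with $0<r_{W_j}(E_j)<k$. Apply the induction hypothesis to $W_j,E_j$ to obtain an arc $e\in E_j\cup D[W_j]\subseteq E\cup D[W]$ satisfying one of the two conclusions for $W_j$.

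In the first inductive case, $D_{W_j}-e$ has no $\cL[W_j]$-tight \krarb[s_{W_j}]; any $\cL[W]$-tight \krarb[s_W] in $D_W-e$ would restrict (by contracting $V+s-W_j$) to such an object, so $D_W-e$ contains none either, giving the first conclusion of the lemma. In the second inductive case, $r_{D',W_j}(E_j-e)=r_{W_j}(E_j)-1$; if $D'_W$ fails to contain an $\cL[W]$-tight \krarb[s_W] we are in the first conclusion, otherwise I plug the original subpartition $\cX$ into the formula of Corollary \ref{cor:recursive} for $r_{D',W}(E-e)$. Since $e\in D_{W_j}$, only the term indexed by $X$ changes, and by the inductive equality it drops by exactly $1$, yielding $r_{D',W}(E-e)\le r_W(E)-1$. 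Combined with Lemma \ref{lem:rank1} this gives equality, completing the inductive step. The main delicate point I expect is verifying that exactly one summand changes (the $W_j$'s are disjoint so $e$ enters at most one of them), and carefully separating the sub-cases where the rank may cease to be defined.
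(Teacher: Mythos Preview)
Your proposal is correct and follows essentially the same inductive approach as the paper's proof: both locate, via the determining subpartition from Corollary~\ref{cor:recursive}, a maximal child $W_j$ with $0<r_{W_j}(E_j)<k$, apply induction there, and then use Lemma~\ref{lem:rank1} to conclude. Your write-up is in fact more explicit than the paper's in justifying the existence of such $X$ and $W_j$ (by first pruning members with $r_W^\oplus=k$ and arguing $\cX\neq\emptyset$), and in separating out the case where $D'_W$ itself might fail to admit a tight $k$-arborescence; the paper compresses all of this into a single sentence.
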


\ifsodaversion
\else

\begin{proof}
The proof is by induction on $|W|$; the claim is clearly true if $W$ is a singleton.
Let $W_1, \dots, W_l$ be the maximal members of $\cL[W]-W$.
By Corollary \ref{cor:recursive}, there exists an $\cL[W]$-compatible subpartition \cX\ of $W$ that determines $r_W(E)$.
We can choose a set $X\in \cX$ and an index $i$  for which $W_i\subseteq X$ and
\[0<r_{W_i}(\delta^{in}_{E \cup D[W]}(X)\cap \delta^{in}_{E \cup D[W]}(W_i))<k.\]
Let $\Delta$ denote $\delta^{in}_{E \cup D[W]}(X)\cap \delta^{in}_{E \cup D[W]}(W_i)$. By induction, there is an arc
$e\in \Delta \cup D[W_i]$ such that either $D'_{W_i}$ does not contain an $\cL[W_i]$-tight \krarb[s_{W_i}]
(where $D'$ is the digraph obtained by removing $e$),
or $r_{D',W_i}(\Delta-e)=r_{W_i}(\Delta)-1$.
The latter possibility means that $r_{D',W}(E-e)< r_W(E)$; on the other hand, the rank can decrease by at
most one by Lemma \ref{lem:rank1}.
\end{proof}
\fi

We can formulate a similar statement for an $\cL[W]$-compatible subset
of $W$, which easily follows from the previous lemma.

\begin{lemma}\label{lem:decrease_rankX}
 Let $W \in \cL$, let $X\subseteq W$ be an $\cL[W]$-compatible set, and let $E\subseteq \delta^{in}_{D^+}(X)$ such that $0<r^\oplus_W(E)<k$.
 Then there exists an arc
 $e\in E\cup D[X]$  such that either $D_W-e$ does not contain an $\cL[W]$-tight \krarb[s_{W}],
 or $r^\oplus_{D',W}(E-e)=r^\oplus_W(E)-1$, where $D' = D^+-e$.\qed
\end{lemma}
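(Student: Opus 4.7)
The plan is to reduce directly to Lemma \ref{lem:decrease_rankX}'s predecessor, Lemma \ref{lem:decrease_rank}, by locating a single maximal member of $\cL[W]-W$ inside $X$ on which the hypotheses of that lemma are satisfied.

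First I would unpack the definition of $r^\oplus_W$. Since $X$ is $\cL[W]$-compatible, $X$ is the disjoint union of some subfamily $W_{i_1},\dots,W_{i_t}$ of the maximal members of $\cL[W]-W$, and by definition
\[
r^\oplus_W(E)=\sum_{j=1}^t r_{W_{i_j}}\bigl(E\cap \delta^{in}_{D^+}(W_{i_j})\bigr).
\]
Each summand lies in $[0,k]$ because $W_{i_j}$ has rank $k$. The hypothesis $r^\oplus_W(E)<k$ therefore forces every individual term to be strictly less than $k$, and the hypothesis $r^\oplus_W(E)>0$ forces at least one term to be strictly positive. Pick an index $j$ with $0<r_{W_{i_j}}(E_j)<k$, where $E_j:=E\cap \delta^{in}_{D^+}(W_{i_j})$.

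Now apply Lemma \ref{lem:decrease_rank} to $W_{i_j}$ and $E_j$ to obtain an arc $e\in E_j\cup D[W_{i_j}]$ (which is contained in $E\cup D[X]$, since $W_{i_j}\subseteq X$) with the stated dichotomy. In the first case, $D_{W_{i_j}}-e$ contains no $\cL[W_{i_j}]$-tight \krarb[s_{W_{i_j}}]; by Corollary \ref{cor:matr}, any $\cL[W]$-tight \krarb[s_W] in $D_W-e$ would, restricted to $W_{i_j}$, yield such a subarborescence, so $D_W-e$ contains no $\cL[W]$-tight \krarb[s_W] either. In the second case, $r_{D',W_{i_j}}(E_j-e)=r_{W_{i_j}}(E_j)-1$; since the arc $e$ lies in at most one of the sets $\delta^{in}_{D^+}(W_{i_h})$ (namely the one with $h=j$, if any), the remaining summands in the definition of $r^\oplus_{D',W}(E-e)$ are unchanged, and hence $r^\oplus_{D',W}(E-e)=r^\oplus_W(E)-1$.

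There is no real obstacle: the work is all in Lemma \ref{lem:decrease_rank}; the present statement is just its ``direct sum'' analogue, and the only things to check are that the conditions $0<(\cdot)<k$ transfer to one of the summands and that the deleted arc $e$ affects only one summand. Both are immediate from the definition of $r^\oplus_W$ and from $X$ being a disjoint union of the $W_{i_j}$'s.
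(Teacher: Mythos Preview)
Your argument is correct and is exactly the reduction the paper has in mind: the paper gives no proof at all (it appends \qed\ to the statement and remarks only that it ``easily follows from the previous lemma''), and your reduction---pick a summand $W_{i_j}$ with $0<r_{W_{i_j}}(E_j)<k$, apply Lemma~\ref{lem:decrease_rank} there, and observe that the chosen arc affects only that summand---is the natural way to fill in the details.
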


Let $\gamma$ be the minimum size of a transversal of the family of $\cL$-tight \karb s. Using the above lemma, we will show that
if $\gamma \geq k$, then there exists a minimum transversal having a special structure. This will lead to a polynomial
algorithm for fixed $k$ the following way: first we check every arc subset of size at most $k-1$; if none of these is a
transversal, then we look for a minimum transversal among those having the special structure. As we will see, this can be done in polynomial time
using the results in \cite{mincostarb}.

We start with an easy corollary of Lemma \ref{lem:decrease_rankX} that describes a case that cannot happen when $\gamma \geq k$; the proof is
left to the reader.

\begin{cor}\label{cor:gamma<k}
If there exists $W\in \cL$ and two nonempty disjoint $\cL[W]$-compatible sets $X_1, X_2\subseteq W$
with $r^\oplus_W(\delta^{in}_{D[W]}(X_j))<k$ for both $j=1,2$, then $\gamma < k$. \qed
\end{cor}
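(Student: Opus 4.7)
The plan is to build a transversal of size at most $k-1$ by iteratively applying Lemma~\ref{lem:decrease_rankX} to the two disjoint sides $X_1$ and $X_2$, removing one arc at a time and exploiting only that $X_1\cap X_2=\emptyset$.

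First I would set $a_j:=r^\oplus_W(\delta^{in}_{D[W]}(X_j))$ and establish the two-sided bound $k\le a_1+a_2\le 2k-2$. The upper bound is immediate from the hypothesis $a_j<k$. For the lower bound I may assume that $D$ contains some $\cL$-tight \karb\ (otherwise $\gamma=0<k$ already); then by Corollary~\ref{cor:matr} this \karb\ restricts to an $\cL[W]$-tight \krarb[s_W] in $D_W$, so condition~\eqref{it:Bnempty} of Corollary~\ref{cor:indepmatroid} applied to the $\cL[W]$-compatible subpartition $\{X_1,X_2\}$ yields $(k-a_1)+(k-a_2)\le k$.

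The main loop starts with $D':=D^+$ and $H:=\emptyset$. While $a_1+a_2\ge k$, both $a_j$ are strictly positive (because $a_{3-j}<k$), so Lemma~\ref{lem:decrease_rankX} applies to the pair $\bigl(X_j,\delta^{in}_{D'[W]}(X_j)\bigr)$ for some $j\in\{1,2\}$ and returns an arc $e\in\delta^{in}_{D'[W]}(X_j)\cup D'[X_j]$. If removing $e$ already destroys every $\cL[W]$-tight \krarb[s_W] in $D'_W$, I add $e$ to $H$ and stop; otherwise I replace $D'$ by $D'-e$, add $e$ to $H$, and $a_j$ drops by exactly one. The main obstacle---really the only nontrivial point---is to verify that the decrement is fully local to side $j$: the head of $e$ lies in $X_j$, and $e$ is not contained in $D'[W_i]$ for any $W_i\subseteq X_{3-j}$, so by disjointness of $X_1$ and $X_2$ neither the arc set $\delta^{in}_{D'[W]}(X_{3-j})$ nor any of the constituent matroids $M_{W_i}$ with $W_i\subseteq X_{3-j}$ entering the direct sum that computes $a_{3-j}$ is affected, hence $a_{3-j}$ is preserved.

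With the two ranks decoupled, the rest is bookkeeping: $a_1+a_2$ drops by one per iteration from an initial value of at most $2k-2$, so after at most $(2k-2)-(k-1)=k-1$ iterations either the procedure has already terminated, or $a_1+a_2\le k-1$. In the latter case the subpartition $\{X_1,X_2\}$ violates~\eqref{it:Bnempty} for $D'_W$, so $D'_W$ admits no $\cL[W]$-tight \krarb[s_W] either. In both outcomes Corollary~\ref{cor:matr} implies that $D^+-H$ contains no $\cL$-tight \karb, so $H$ is a transversal of the family of $\cL$-tight \karb s of size at most $k-1$, and therefore $\gamma<k$.
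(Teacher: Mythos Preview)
Your argument is correct and matches the approach the paper has in mind: the corollary is stated as an easy consequence of Lemma~\ref{lem:decrease_rankX} with the proof left to the reader, and your iterative application of that lemma---together with the observation that an arc produced by the lemma for side $X_j$ has its head in $X_j$ and hence cannot touch $\delta^{in}_{D'[W]}(X_{3-j})$ or any $D'[W_i]$ with $W_i\subseteq X_{3-j}$---is exactly the intended mechanism. One cosmetic simplification: you do not actually need to alternate between the two sides. Since $a_1>0$ (from $a_1+a_2\ge k$ and $a_2<k$), you can apply Lemma~\ref{lem:decrease_rankX} only to $X_1$ until $a_1=0$; this uses at most $a_1\le k-1$ arcs, leaves $a_2$ unchanged, and already forces $a_1+a_2=a_2<k$, violating~\eqref{it:Bnempty}. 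But your two-sided bookkeeping is equally valid.
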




To describe the special structure of the minimum transversal that we are looking for, we use a set function that also played a
crucial role in the $k=1$ case that was solved in \cite{mincostarb}.
For  $W\in \cL$ and $Z \subseteq W$, we define
\[f_W(Z) := |\{e \in D[W]: e \in \delta^{in}(Z),\ e \notin \delta^{out}(W') \text{ if $W' \in \cL[W]$ and $W' \cap Z \neq \emptyset$ }\}|.\label{eq:f_F}\]
If $D'$ is a digraph different from $D$, then we use $f_{D', W}(Z)$ to denote the analogous set function for $D'$.
The following claim was proved for $k=1$ in \cite[Lemma 3]{mincostarb}.

\begin{claim}\label{cl:Z_1Z_2}
Let $D=(V, A)$ be a digraph and $\cL\subseteq 2^V$ a laminar family. If
there exists an \cL-tight \karb\ in $D$, then $f_W(Z_1)+f_W(Z_2)\ge k$
for any $W\in \cL$ and nonempty disjoint sets $Z_1, Z_2\subseteq W$.
\end{claim}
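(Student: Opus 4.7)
The plan is to exhibit $k$ distinct arcs of $D[W]$ contributing to $f_W(Z_1)+f_W(Z_2)$, one per arborescence in a decomposition of $F[W]$. By $\cL$-tightness, $F[W]$ is a $k$-arborescence in $D[W]$; fix a decomposition $F[W]=T_1\sqcup\cdots\sqcup T_k$ into arc-disjoint spanning arborescences with roots $r_1,\ldots,r_k\in W$. Since $Z_1\cap Z_2=\emptyset$, for every $j$ at least one of $Z_1,Z_2$ (call it $Z^{(j)}$) avoids $r_j$, so $T_j$ contains at least one arc entering $Z^{(j)}$; moreover no single arc enters both $Z_1$ and $Z_2$, because its head cannot lie in both disjoint sets. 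Consequently, it suffices to show that for each $j$ some arc of $T_j$ entering $Z^{(j)}$ is counted by $f_W(Z^{(j)})$: the resulting $k$ arcs are pairwise distinct (the $T_j$'s are arc-disjoint) and each contributes $1$ to $f_W(Z_1)+f_W(Z_2)$.

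I would prove the existence of such an ``$f$-arc'' in each $T_j$ by induction on $|\cL[W]|$. In the base case $\cL[W]=\{W\}\cup\{\{v\}:v\in W\}$ no non-trivial member of $\cL[W]$ is available to exclude an arc entering $Z^{(j)}$, so every such arc of $T_j$ is already counted; equivalently $f_W(Z)=\varrho_{D[W]}(Z)$. For the inductive step, I would choose an arc $e=uv\in T_j$ entering $Z^{(j)}$ whose LCA $\overline{uv}$ in the laminar tree is as small as possible. If $e$ is an $f$-arc we are done; otherwise the maximal $\cL[W]$-set $W^{*}:=W_u^{-v}$ (the largest member containing $u$ but not $v$) meets $Z^{(j)}$. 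Since $F[W^{*}]$ is a $k$-arborescence in $D[W^{*}]$ by tightness of $W^{*}$, the component of the forest $T_j[W^{*}]$ containing some witness $z^{*}\in W^{*}\cap Z^{(j)}$ provides, via the internal $T_j$-path from its root to $z^{*}$, an entering-$Z^{(j)}$ arc whose LCA lies strictly inside $W^{*}\subsetneq\overline{uv}$, contradicting the minimality of $\overline{uv}$. A completely parallel outer-induction route is also available: if some $W^{*}\in\cL[W]$ meets both $Z_1$ and $Z_2$ nontrivially, apply the claim inductively to $(W^{*},\cL[W^{*}],Z_1\cap W^{*},Z_2\cap W^{*})$ and note that arcs counted by $f_{W^{*}}$ are automatically counted by $f_W$; otherwise no maximal proper member of $\cL[W]$ straddles both $Z_j$, and one can round up to $\cL[W]$-compatible $\tilde Z_1,\tilde Z_2$ and invoke the base identity.

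The main obstacle I anticipate is the corner case where the $T_j[W^{*}]$-component containing the chosen $z^{*}$ is rooted at a vertex already in $Z^{(j)}$, so the internal path does not immediately furnish a new entering-$Z^{(j)}$ arc. Resolving this requires either a deliberate reselection of $z^{*}$—for instance, inside a component of $T_j[W^{*}]$ whose root lies outside $Z^{(j)}$, whose existence should follow from a counting argument on the root multiplicities of $F[W^{*}]$ (they sum to $k$)—or an iterative descent through successive tight members of $\cL[W]$, which terminates because $\cL[W]$ is finite. Subtler still is the alternative outer-induction path, which will run into a symmetric difficulty whenever every maximal proper member of $\cL[W]$ properly splits $Z_1$ (or $Z_2$) without intersecting the other, requiring a separate unbalanced-case treatment before the $\cL[W]$-compatible reduction can be applied.
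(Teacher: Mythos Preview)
Your approach is quite different from the paper's, and the gaps you flag are real rather than cosmetic. The paper avoids per-arborescence bookkeeping entirely via a \emph{tail-relocation} trick: for every arc of $D[W]$ entering $Z_j$ that fails to be an $f$-arc because it leaves some $W'\in\cL[W]$ with $W'\cap Z_j\neq\emptyset$, move its tail to a node of $W'\cap Z_j$. In the resulting digraph $D'$, the $\cL[W]$-tight $k$-arborescence survives, and now $f_W(Z_j)=\varrho_{D'[W]}(Z_j)$; the claim reduces to the elementary subpartition bound $\varrho_{D'[W]}(Z_1)+\varrho_{D'[W]}(Z_2)\ge k$ for $k$-arborescences. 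No decomposition into $T_j$'s, no induction on $\cL$.

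Your per-$T_j$ strategy is not established, and both routes you sketch break down. In the LCA induction, the fix you propose---a counting argument on root multiplicities of $F[W^{*}]$---gives information about $F[W^*]$ as a whole, not about the single forest $T_j[W^*]$; nothing forces the component of $T_j[W^*]$ through $z^*$ to have its root outside $Z^{(j)}$. In the outer induction, the rounding step is wrong: replacing $Z_j$ by its $\cL[W]$-compatible hull $\tilde Z_j$ can \emph{increase} $f_W$, so $f_W(\tilde Z_1)+f_W(\tilde Z_2)\ge k$ does not imply the inequality for $Z_1,Z_2$. Concretely, with $W=\{1,2,3\}$, $\cL[W]=\{W,\{1,2\}\}$ plus singletons, and $D[W]$ having arcs $12,21,13$ and two copies of $32$, the set $F=\{12,21,13,32\}$ is an $\cL$-tight $2$-arborescence, yet $f_W(\{1\})=1<2=f_W(\{1,2\})$. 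The deeper issue is that $\cL$-tightness constrains $F$, not any fixed decomposition: without tightness the per-$T_j$ statement is simply false (e.g.\ the arborescence $T=\{13,32\}$ rooted at $1$ with $Z=\{2,3\}$ and $\{1,2\}\in\cL$ has no $f$-arc entering $Z$), and your argument never uses tightness in a way that controls an individual $T_j$.
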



\begin{proof}
Suppose for contradiction that there exists an \cL-tight \karb\ in $D$
and there exist
 $W\in \cL$ and nonempty disjoint sets  $Z_1, Z_2\subseteq W$ such that  $f_W(Z_1)+f_W(Z_2)\le k-1$.
Consider the digraph $D'$ obtained from $D$ the following way: for every arc $e \in \delta^{in}_{D[W]}(Z_j)$ for which
there exists $W' \in \cL[W]$ such that $W' \cap Z_j \neq \emptyset$ and $e \in \delta^{out}_{D[W]}(W')$, we change the
tail of $e$ to an arbitrary node in $W' \cap Z_j$ ($j=1,2$). This is the \textbf{tail-relocation operation} introduced in \cite{mincostarb}.
The following can be seen easily:
\begin{itemize}
\item If $F$ is an $\cL[W]$-tight \karb\ in $D$, then the corresponding arc set in $D'$ is also an $\cL[W]$-tight \karb;
\item $f_W(Z_j)=f_{D',W}(Z_j)=\varrho_{D'[W]}(Z_j)$ ($j=1,2$).
\end{itemize}
This contradicts $f_W(Z_1)+f_W(Z_2)\le k-1$, because the
existence of an $\cL[W]$-tight \karb\ implies $\varrho_{D'[W]}(Z_1)+\varrho_{D'[W]}(Z_2) \geq k$.
\end{proof}

Note that in the case $k=1$, \cite[Lemmas 3, 4]{mincostarb} state that
there exists an \cL-tight arborescence in $D$ if and only if
$f_W(Z_1)+f_W(Z_2)\ge 1$ for any $W\in \cL$ and nonempty disjoint sets
$Z_1, Z_2\subseteq W$.  Unfortunately, the analogous statement is not true
for $k>1$, as illustrated in Figure \ref{fig:Tamaspelda}.

\begin{figure}
\begin{center}
\scalebox{0.8}{\input{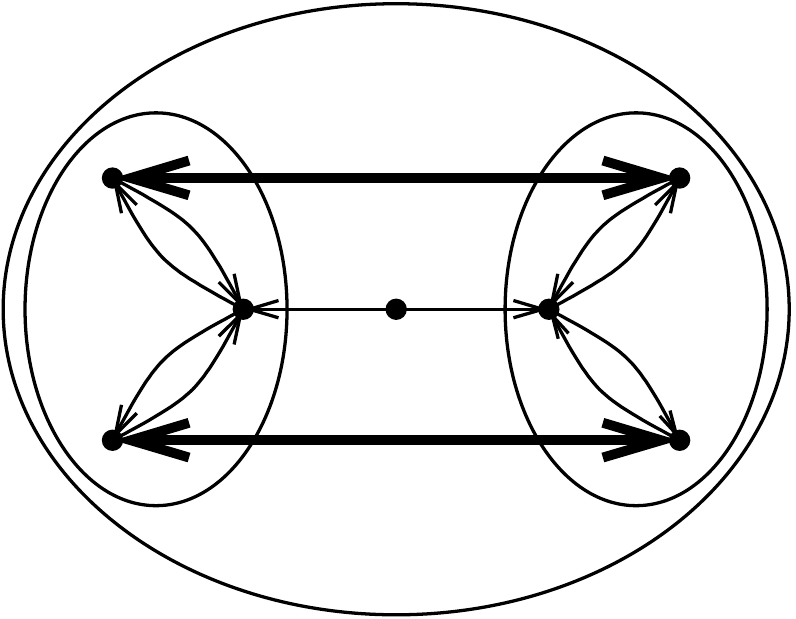_t}}
\caption{A digraph that does not admit an \cL-tight $2$-arborescence.
Bold arcs are bidirected and have multiplicity 2, and \cL\ has 3 members, indicated by ellipses.
There is no \cL-tight $2$-arborescence, although  $\sum_{X\in \cX} f_{W}(X) \ge k(|\cX|-1)$ holds for every $W\in \cL$
and every $\cL[W]$-compatible subpartition \cX\ of $W$. Note that the arc $sv$ is a loop in the matroid $M_W$, and $r_W(\{sv\})$ is determined by the subpartition $\{\{x_1\}, \{x_2\}\}$.}
\label{fig:Tamaspelda}
\end{center}
\end{figure}

The following upper bound on the rank can be proved similarly to Claim \ref{cl:Z_1Z_2}.
\ifsodaversion
The proof can be found in the Appendix.
\else
\fi

\begin{lemma}\label{lem:ranklefW}
If $W\in \cL$ and $E\subseteq \delta^{in}_{D^+}(W)$, then $r_W(E)\le f_W(Z)+\varrho_E(Z)$ for every non-empty $Z\subseteq W$.
\end{lemma}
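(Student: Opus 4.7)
The plan is to adapt the tail-relocation technique that appears in the proof of Claim \ref{cl:Z_1Z_2}. Fix $W\in\cL$, $E\subseteq\delta^{in}_{D^+}(W)$ and a nonempty $Z\subseteq W$, and set $t=r_W(E)$. By definition there is an $\cL[W]$-tight \krarb[s_W] $F$ in $D_W$ whose $k$ arcs $F\cap\delta^{out}_{D_W}(s_W)$ include some $I\subseteq E$ with $|I|=t$; in particular, at most $k-t$ of these arcs lie outside $E$.

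I would then perform the tail-relocation operation with respect to $Z$: for every arc $e\in\delta^{in}_{D[W]}(Z)$ whose tail lies in some $W'\in\cL[W]$ with $W'\cap Z\ne\emptyset$, change the tail of $e$ to an arbitrary node of $W'\cap Z$. Exactly as in Claim \ref{cl:Z_1Z_2}, the image $F'$ of $F$ in the modified digraph $D'$ is still an $\cL[W]$-tight \krarb[s_W] (one uses that $F[W']$ is a $k$-arborescence spanning $W'$, which allows any such tail to be moved inside $W'$ without disturbing the arborescence structure). Two features are essential: arcs from $s_W$ are not touched by the relocation, so $F\cap\delta^{out}_{D_W}(s_W)=F'\cap\delta^{out}_{D'_W}(s_W)$; and every relocated arc becomes internal to $Z$, so $\varrho_{D'[W]}(Z)=f_W(Z)$.

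The remainder is a one-line counting argument. Since $F'$ is a $k$-arborescence rooted at $s_W$, one has $\varrho_{F'}(Z)\ge k$. Splitting this in-degree into the contribution of arcs from $s_W$ and arcs inside $W$ gives
\[k\ \le\ \varrho_{F'}(Z)\ =\ \bigl|\{e\in F\cap\delta^{out}_{D_W}(s_W):\text{head of }e\in Z\}\bigr|\ +\ \varrho_{F'\cap D'[W]}(Z).\]
The second summand is at most $\varrho_{D'[W]}(Z)=f_W(Z)$. For the first, split the $k$ arcs of $F$ from $s_W$ into those in $E$ (contributing at most $\varrho_E(Z)$) and the at most $k-t$ arcs outside $E$ (contributing at most $k-t$), for a total of at most $\varrho_E(Z)+(k-t)$. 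Combining the two bounds yields $k\le\varrho_E(Z)+(k-t)+f_W(Z)$, which rearranges to $t\le f_W(Z)+\varrho_E(Z)$, as required.

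The only genuinely delicate point is to verify that tail relocation preserves both the $\cL[W]$-tightness and the $k$-arborescence structure of $F$; since this verification is essentially the same as the one already carried out in Claim \ref{cl:Z_1Z_2}, I do not anticipate any serious obstacle.
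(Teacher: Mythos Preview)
Your proof is correct and follows essentially the same approach as the paper: both choose an $\cL[W]$-tight $s_W$-rooted $k$-arborescence $F$ with $|F\cap E|=r_W(E)$, apply the tail-relocation operation for $Z$ to obtain $D'$ and $F'$, and then bound $\varrho_{F'}(Z)\ge k$ by $f_W(Z)+\varrho_E(Z)+(k-r_W(E))$. The paper's writeup is slightly more compact (it records the chain of inequalities in a single line rather than splitting the arcs from $s_W$ into two cases), but the argument is identical.
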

\ifsodaversion
\else
\begin{proof}
By the definition of the rank, there is an $\cL[W]$-tight \krarb[s_W] $F$ such that $|F\cap E|= r_W(E)$.
We apply the tail-relocation operation
described in the proof of Claim \ref{cl:Z_1Z_2}; let $D'$ be the modified digraph, and let $F'$ be the $\cL[W]$-tight \krarb[s_W] obtained from $F$.
On one hand, $f_W(Z)=f_{D',W}(Z)=\varrho_{D'[W]}(Z)$. On the other hand,
\[k \leq \varrho_{F'}(Z) \leq \varrho_{D'[W]}(Z)+\varrho_{E\cap F}(Z)+ \varrho_{F-E}(W)\leq \varrho_{D'[W]}(Z)+\varrho_{E}(Z)+(k-r_W(E)),\]
so $r_W(E) \leq \varrho_{D'[W]}(Z)+ \varrho_{E}(Z)=f_W(Z)+\varrho_E(Z)$, as required.
\end{proof}
\fi

\newcommand{\elem}{elementary}

Our next observation is that for some special arc sets the above formula is tight.
To describe these special arc sets, we use a recursive definition.
For $W\in \cL$ and $E\subseteq \delta^{in}_{D^+}(W)$, we say that $E$ is \textbf{$W$-\elem} if $r_W(E)<k$ and
\begin{itemize}
\item either $|W|=1$
\item or there exists an $\cL[W]$-compatible set $X \subseteq W$ such that the subpartition $\{X\}$ determines $r_W(E)$,
and $\delta^{in}_{E\cup D[W]}(X)\cap
  \delta^{in}_{E\cup D[W]}(W')$ is $W'$-\elem\ for every maximal member $W'$
  of $\cL[W]-W$.
\end{itemize}
Intuitively, an arc set is \elem\ if only subpartitions of cardinality 1 occur in its recursive rank formula.
Note that $E=\emptyset$ is $W$-elementary for every $W$, since $\{W\}$ determines $r_W(E)$.
\ifsodaversion
The proof of Lemma \ref{lem:elem} can be found in the Appendix.
\fi

\begin{lemma}\label{lem:elem}
Let $W\in \cL$ and $E\subseteq \delta^{in}_{D^+}(W)$. If $E$ is $W$-\elem, then $r_W(E)=\min\{f_W(Z)+\varrho_E(Z):\emptyset\ne Z\subseteq W\}$.
\end{lemma}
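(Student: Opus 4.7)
The plan is to prove this by induction on $|W|$ (using the natural order on $\cL[W]$ given by the laminar structure). One direction, namely $r_W(E)\le f_W(Z)+\varrho_E(Z)$ for every non-empty $Z\subseteq W$, is already given by Lemma \ref{lem:ranklefW}, so the content is to exhibit some $Z$ attaining equality.

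In the base case $|W|=1$, since $E$ is $W$-elementary we have $r_W(E)<k$, so $r_W(E)=|E|$ by Corollary \ref{cor:recursive}. Taking $Z=W$, we get $\varrho_E(Z)=|E|$ and $f_W(Z)=0$ (there are no arcs in $D[W]$), so equality holds. For the inductive step, use the definition of $W$-elementary: pick an $\cL[W]$-compatible set $X$ such that $\{X\}$ determines $r_W(E)$, and let $W_1,\dots,W_m$ be the maximal members of $\cL[W]-W$ that compose $X$. By hypothesis, the arc set $E_i:=\delta^{in}_{E\cup D[W]}(X)\cap \delta^{in}_{E\cup D[W]}(W_i)$ is $W_i$-elementary for every $i\le m$, so by induction there exists a non-empty $Z_i\subseteq W_i$ with $r_{W_i}(E_i)=f_{W_i}(Z_i)+\varrho_{E_i}(Z_i)$. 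I would then set $Z:=\bigcup_{i=1}^m Z_i$, which is non-empty and contained in $W$.

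The remaining task is a bookkeeping argument comparing both sides. Note that $E_i$ decomposes as the arcs of $E$ entering $W_i$ (all such arcs have tails outside $W$, hence outside $X$) together with the arcs of $D[W]$ from $W-X$ into $W_i$; call the latter set $B_i$. Since the $Z_i$ lie in pairwise disjoint $W_i$, we obtain
\[\sum_{i=1}^m \varrho_{E_i}(Z_i)=\varrho_E(Z)+\bigl|\{e\in D[W]:\ \text{tail in }W-X,\ \text{head in }Z\}\bigr|.\]
For the set function $f_W(Z)$, I would classify each arc of $D[W]$ entering $Z$ according to whether its tail lies in $W-X$ or in some $W_j$: arcs from $W-X$ are never in $\delta^{out}(W')$ for any proper $W'\in\cL[W]$ meeting $Z$ (since any such $W'$ is contained in some $W_l\subseteq X$), so they are all counted; an arc from $W_j$ to $W_i$ with $j\ne i$ and $j\le m$ is excluded because $W_j\cap Z=Z_j\ne\emptyset$; arcs from $W_j$ with $j>m$ are counted (and they already appear in the previous count since $W_j\subseteq W-X$); and arcs inside a single $W_i$ (with $i\le m$) contribute to $f_W(Z)$ under exactly the condition that defines $f_{W_i}(Z_i)$, using laminarity to argue that any $W'\in\cL[W]$ containing their tail is either inside $W_i$ or comparable with it. This yields
\[f_W(Z)=\sum_{i=1}^m f_{W_i}(Z_i)+\bigl|\{e\in D[W]:\ \text{tail in }W-X,\ \text{head in }Z\}\bigr|.\]

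Adding the two displays and using $r_W(E)=\sum_{i=1}^m r_{W_i}(E_i)$ (which holds because $\{X\}$ determines $r_W(E)$), the correction term $|\{e\in D[W]:\ \text{tail in }W-X,\ \text{head in }Z\}|$ cancels, and I obtain $r_W(E)=f_W(Z)+\varrho_E(Z)$, completing the induction. The main obstacle I anticipate is the careful case analysis needed to compute $f_W(Z)$ correctly, in particular handling arcs whose tail lies in some $W_j$ with $j>m$ (i.e.\ outside $X$): one must verify that such arcs are already accounted for among the arcs from $W-X$, so that the identity above is a true equality and not an inequality.
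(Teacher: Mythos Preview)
Your proof is correct and follows essentially the same inductive strategy as the paper: invoke Lemma~\ref{lem:ranklefW} for one inequality, handle the singleton base case, pick the $\cL[W]$-compatible set $X$ witnessing elementarity, apply the inductive hypothesis to each maximal $W_i\subseteq X$ to obtain $Z_i$, and take $Z=\bigcup_i Z_i$. The only difference is in presentation: the paper compresses the entire bookkeeping into the single observation ``an arc entering $W_i$ but not entering $X$ does not contribute to $f_W(Z)+\varrho_E(Z)$'' and concludes $f_W(Z)+\varrho_E(Z)=\sum_i(f_{W_i}(Z_i)+\varrho_{E_i}(Z_i))$ directly, whereas you carry out the case analysis explicitly by tracking the correction term $C=|\{e\in D[W]:\text{tail in }W-X,\ \text{head in }Z\}|$ on both sides. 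Your more explicit accounting is arguably clearer, and your worry about arcs with tail in $W_j$ for $j>m$ is unfounded (as you yourself note, these are exactly the arcs from $W-X$, so there is no double counting).
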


\ifsodaversion
\else

\begin{proof}By Lemma \ref{lem:ranklefW}, $r_W(E)\le\min\{f_W(Z)+\varrho_E(Z):\emptyset\ne Z\subseteq W\}$. We prove the other direction by induction on the size of $W$. If
$|W|=1$, then equality holds for $Z=W$, because we assumed that
  $r_W(E)<k$. If $|W|>1$, then let $W_1,\dots,W_l$ be the maximal
  members of $\cL[W]-W$.  Since $E$ is $W$-\elem, there is a
  $\cL[W]$-compatible set $\emptyset \neq X \subseteq W$ such that
  $r_{W}(E)=r_{W}^\oplus(\delta^{in}_{E\cup D[W]}(X))$ and
  $E_i:=\delta^{in}_{E\cup D[W]}(X) \cap \delta^{in}(W_i)$ is
  $W_i$-\elem\ for every $i$.  We may assume that $X=\cup_{i=1}^tW_i$
  for some $1\le t\le l$, and thus $r_{W}(E)=\sum_{i=1}^t
  r_{W_i}(E_i)$. By induction, there exist nonempty $Z_i \subseteq
  W_i$ ($i=1,\dots,t$) such that $r_{W_i}(E_i)=f_{W_i}(Z_i) +
  \varrho_{E_i}(Z_i)$.  Let $Z=\cup_{i=1}^t Z_i$. Observe that an arc
  entering $W_i$ but not entering $X$ does not contribute to $f_{W}(Z)
  + \varrho_{E}(Z)$, thus $f_{W}(Z) + \varrho_{E}(Z) =\sum_{i=1}^t
  (f_{W_i}(Z_i) + \varrho_{E_i}(Z_i))=r_{W}(E)$.
\end{proof}
\fi

If a digraph $D'$ is considered instead of $D$, then we speak of $(D',W)$-\elem\ arc sets.
We also extend the notion to arc sets in $D[W]$ entering a specified $\cL[W]$-compatible subset.
For $W\in \cL$ and an $\cL[W]$-compatible subset $X$ of $W$, we say that a set
$E\subseteq \delta^{in}_{D[W]}(X)$ is \textbf{$X$-\elem}\ if $r^\oplus_W(E)<k$ and  $E\cap
\delta^{in}(W')$ is $W'$-\elem\ for every maximal member $W'$ of
$\cL[W]-W$. The following is an easy consequence of Lemma \ref{lem:elem}.

\begin{lemma}\label{lem:elem2}
Let $W\in \cL$ and let $E\subseteq \delta^{in}_{D[W]}(X)$ for some nonempty $\cL[W]$-compatible subset $X$ of $W$.
If $E$ is $X$-\elem, then $r^{\oplus}_W(E)=\min\{f_W(Z):\emptyset\ne Z\subseteq X\}$. \qed
\end{lemma}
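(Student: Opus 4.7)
The plan is to reduce the claim to Lemma~\ref{lem:elem} applied individually to each maximal member of $\cL[W]-W$ contained in $X$. Write $X=\bigcup_{i=1}^t W_i$, where the $W_i$ are the maximal members of $\cL[W]-W$ contained in $X$, and set $E_i:=E\cap \delta^{in}(W_i)$. By the definition of $r^\oplus_W$ we have $r^\oplus_W(E)=\sum_{i=1}^t r_{W_i}(E_i)$, and by the $X$-elementary hypothesis each $E_i$ is $W_i$-elementary, so Lemma~\ref{lem:elem} yields
\[
r_{W_i}(E_i)=\min\bigl\{f_{W_i}(Z_i)+\varrho_{E_i}(Z_i):\emptyset\ne Z_i\subseteq W_i\bigr\}.
\]

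For the inequality $r^\oplus_W(E)\le f_W(Z)$, I would first handle any nonempty $Z\subseteq X$ that meets every $W_i$ (a $Z$ missing some $W_i$ can be enlarged without decreasing $f_W$ at the cost of a routine argument). Setting $Z_i:=Z\cap W_i$, I classify the arcs of $D[W]$ entering $Z$ into three types: (a) arcs inside some $W_i$ entering $Z_i$, (b) arcs between two distinct $W_j$ and $W_i$, and (c) arcs with tail in $W\setminus X$ and head in $Z$. Since each $Z_j\ne\emptyset$, type-(b) arcs leave a set in $\cL[W]$ intersecting $Z$, so they are excluded from $f_W(Z)$; type-(a) arcs contribute at least $\sum_i f_{W_i}(Z_i)$; and type-(c) arcs contribute at least $\varrho_E(Z)=\sum_i\varrho_{E_i}(Z_i)$ because $E\subseteq \delta^{in}_{D[W]}(X)$. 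Summing and plugging into the Lemma~\ref{lem:elem} bound gives $f_W(Z)\ge r^\oplus_W(E)$.

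For the reverse inequality $\min f_W(Z)\le r^\oplus_W(E)$, I would pick a minimizer $Z_i^*$ for each $i$ and put $Z^*:=\bigcup_i Z_i^*$. The same arc-classification shows
\[
f_W(Z^*)\;=\;\sum_i f_{W_i}(Z_i^*)\;+\;\bigl|\{e\in D[W]:\text{tail in }W\setminus X,\ \text{head in }Z^*\}\bigr|,
\]
and it suffices to identify the second term with $\varrho_E(Z^*)=\sum_i\varrho_{E_i}(Z_i^*)$. The point is that the elementary structure of each $E_i$ leaves enough freedom to choose the $Z_i^*$ so that no arc of $\delta^{in}_{D[W]}(X)\setminus E$ has its head in $Z^*$ — intuitively, any such "boundary" non-$E$ arc would create slack in the local rank formula for the corresponding $W_i$, so we can shrink $Z_i^*$ to avoid its head while preserving the local minimum.

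The main obstacle is exactly this last step: proving that the minimizers $Z_i^*$ can be chosen to avoid the heads of $\delta^{in}_{D[W]}(X)\setminus E$. I expect this to go through by induction on $|W|$, mirroring the inductive construction of $Z$ inside the proof of Lemma~\ref{lem:elem}, where the recursive $\cL[W_i]$-compatible set that witnesses $W_i$-elementariness of $E_i$ supplies both the minimizer and the flexibility to steer it away from the unwanted heads.
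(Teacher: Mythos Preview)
Your decomposition $X=\bigcup_{i=1}^t W_i$, $E_i=E\cap\delta^{in}(W_i)$, and your plan to feed each $E_i$ into Lemma~\ref{lem:elem} is exactly what the paper has in mind when it calls this an ``easy consequence'' and appends only a \qed. The intended one-line argument is precisely your $Z^*=\bigcup_i Z_i^*$ construction: pick $Z_i^*$ attaining $r_{W_i}(E_i)=f_{W_i}(Z_i^*)+\varrho_{E_i}(Z_i^*)$ via Lemma~\ref{lem:elem} and verify, by the same bookkeeping as in the last paragraph of that proof, that $f_W(Z^*)=\sum_i\bigl(f_{W_i}(Z_i^*)+\varrho_{E_i}(Z_i^*)\bigr)=r^\oplus_W(E)$. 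In the paper's sole application (inside Theorem~\ref{thm:main}) the lemma is invoked with $E=\delta^{in}_{D^*[W]}(X_j)$, i.e.\ the \emph{full} boundary, so $\delta^{in}_{D[W]}(X)\setminus E=\emptyset$ and the obstacle you flag simply does not arise; your proposed inductive workaround for general $E$ is unnecessary for the paper's purposes and, as you suspect, is not obviously salvageable.

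There is, however, a genuine gap in your upper-bound direction. The claim that a $Z$ missing some $W_i$ ``can be enlarged without decreasing $f_W$'' goes the wrong way: you need $f_W(Z)\ge f_W(Z')$ for the enlarged $Z'$, not the reverse, and in fact the inequality $r^\oplus_W(E)\le f_W(Z)$ can fail for such $Z$. Concretely, take $k=3$, $W=\{v_1,v_2,v_3\}$ with $\cL[W]$ consisting of $W$ and the singletons, $X=\{v_1,v_2\}$, and arcs $v_3v_1$, $v_3v_2$, two copies of $v_1v_2$, two copies of $v_1v_3$; this admits an $\cL$-tight $3$-arborescence. Here $E=\delta^{in}_{D[W]}(X)=\{v_3v_1,v_3v_2\}$ is $X$-elementary with $r^\oplus_W(E)=2$, yet $f_W(\{v_1\})=1$. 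So the lemma, read literally as equality with the \emph{minimum} over all $Z\subseteq X$, is too strong; what is actually used downstream, and what does follow easily from Lemma~\ref{lem:elem}, is the existence of \emph{some} nonempty $Z\subseteq X$ with $f_W(Z)=r^\oplus_W(E)$.
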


Using this lemma, we can finally prove our main result on the minimum size of transversals.
\ifsodaversion
The proof of Theorem   \ref{thm:main} can be found in the Appendix.
\fi
\begin{thm}\label{thm:main}
 If the minimum size of a transversal is $\gamma \geq k$, then $\gamma$ equals
 \begin{equation}
\min_{W\in \cL} \min \{f_W(Z_1)+f_W(Z_2)-k+1:\ \text{$Z_1,Z_2$ are disjoint subsets of $W$}\}.\label{eq:gamma}
 \end{equation}
 \end{thm}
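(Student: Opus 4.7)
The plan is to prove both inequalities implicit in the equality; let $\Gamma$ denote the right-hand side of \eqref{eq:gamma}.

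\textbf{Upper bound $\gamma\le\Gamma$.} Fix $W\in\cL$ and disjoint $Z_1,Z_2\subseteq W$, and let $E_j\subseteq D[W]$ be the set of $f_W(Z_j)$ arcs witnessing the definition of $f_W(Z_j)$. Since the heads of arcs in $E_j$ lie in $Z_j$, the two sets $E_1, E_2$ are disjoint. The plan is to apply the tail-relocation operation from the proof of Claim~\ref{cl:Z_1Z_2} sequentially for $Z_1$ and for $Z_2$; this yields a digraph $D''$ with $\varrho_{D''[W]}(Z_j)=f_W(Z_j)$ and a bijection $F\mapsto F''$ between $\cL$-tight \karb{s} of $D$ and of $D''$, under which $|F\cap E_j|=\varrho_{F''[W]}(Z_j)$. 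Decomposing $F''[W]$ into $k$ spanning arborescences of $W$ with $q_j$ of them rooted in $Z_j$ (so $q_1+q_2\le k$), a standard component-count argument gives $\varrho_{F''[W]}(Z_j)\ge k-q_j$, and summing yields $|F\cap(E_1\cup E_2)|\ge 2k-q_1-q_2\ge k$. Thus $E_1\cup E_2$ minus any $k-1$ of its arcs is a transversal of size $f_W(Z_1)+f_W(Z_2)-k+1$, proving $\gamma\le\Gamma$.

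\textbf{Lower bound $\gamma\ge\Gamma$.} I will exhibit $W, Z_1, Z_2$ with $f_W(Z_1)+f_W(Z_2)\le \gamma+k-1$. Take a minimum transversal $H$ with $|H|=\gamma\ge k$ and set $D'=D^+-H$. Inclusionwise minimality of $H$ and Theorem~\ref{thm:minarcset} yield a minimal $W\in\cL$ for which $D'_W$ lacks an $\cL[W]$-tight \krarb[s_W], with $H\subseteq D[W]$ and an $\cL[W]$-compatible subpartition $\cX$ of $W$ satisfying $\sum_{X\in\cX}r^\oplus_{D',W}(\delta^{in}_{D'[W]}(X))=k(|\cX|-1)-1$. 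Corollary~\ref{cor:gamma<k} combined with $\gamma\ge k$ ensures that in $D$ at most one $X\in\cX$ has $r^\oplus_{D,W}(\delta^{in}_{D[W]}(X))<k$. I first drop from $\cX$ every $X$ whose $r^\oplus_{D',W}$-rank equals $k$: each such deletion decreases both sides of the rank equality by $k$ and therefore preserves it. Using the recursive rank formula of Corollary~\ref{cor:recursive} and the constraint that each remaining $r^\oplus$-value is strictly less than $k$, I argue that the pruned subpartition has exactly two members $\cX=\{X_1,X_2\}$, with $r_j:=r^\oplus_{D',W}(\delta^{in}_{D'[W]}(X_j))$ and $r_1+r_2=k-1$.

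Next, an inductive argument on the laminar depth below $X_j$, combining Corollary~\ref{cor:recursive} and repeated use of Lemma~\ref{lem:decrease_rankX}, shows that $\delta^{in}_{D'[W]}(X_j)$ is $(D',X_j)$-elementary (the rank at every level of the recursion is strictly below $k$ and is determined by a single-set subpartition). Lemma~\ref{lem:elem2} then produces a nonempty $Z_j\subseteq X_j$ with $f_{D',W}(Z_j)=r_j$. The $Z_1, Z_2$ are disjoint; let $E^f_{Z_j}$ be the $f$-witness arc set of $Z_j$ in $D$. Disjointness of heads gives $E^f_{Z_1}\cap E^f_{Z_2}=\emptyset$, and clearly $f_{D,W}(Z_j)-f_{D',W}(Z_j)=|H\cap E^f_{Z_j}|$. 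Consequently,
\[
f_{D,W}(Z_1)+f_{D,W}(Z_2) = (r_1+r_2)+|H\cap(E^f_{Z_1}\cup E^f_{Z_2})| \le (k-1)+|H| = \gamma+k-1,
\]
which yields the desired triple.

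\textbf{Main obstacles.} The two delicate points are the reduction to $|\cX|=2$ after pruning the full-rank members (which requires structural input beyond a pigeonhole count, exploiting the submodularity of $r^\oplus_{D',W}$ together with the fact that at most one $X\in\cX$ is $D$-deficient) and the inductive verification that $\delta^{in}_{D'[W]}(X_j)$ is $(D',X_j)$-elementary. Both steps crucially use the hypothesis $\gamma\ge k$ via Corollary~\ref{cor:gamma<k}.
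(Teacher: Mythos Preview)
Your upper bound argument is fine and is essentially the contrapositive of Claim~\ref{cl:Z_1Z_2}, proved the same way.

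The lower bound, however, has a genuine gap. You fix an arbitrary minimum transversal $H$ and then assert two structural facts about the resulting data $(D',W,\cX)$: that after pruning the rank-$k$ members one has $|\cX|=2$, and that $\delta^{in}_{D'[W]}(X_j)$ is $(D',X_j)$-elementary. Neither of these is true for a general minimum transversal; both require modifying $H$. Concretely, if after pruning $|\cX|=m\ge 3$, the constraints $\sum r_j=k(m-1)-1$ and $r_j<k$ do not force any two of the $r_j$'s to sum to at most $k-1$ (take $k=4$, $m=3$, $(r_1,r_2,r_3)=(3,3,1)$), so you cannot simply select two classes. Your appeal to ``submodularity of $r^\oplus_{D',W}$'' does not help here: that function is submodular on arc sets, not on the node-set variable $X$, and in any case the obstruction is combinatorial, not an inequality on the current configuration. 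Similarly, elementarity can genuinely fail at lower levels of the recursion (Figure~\ref{fig:Tamaspelda} is exactly an instance where a rank is determined only by a two-element subpartition), so your ``inductive argument'' cannot establish it for the given $H$.

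The paper's remedy, which your proposal is missing, is an \emph{exchange} step that changes $H$ itself. Given a class $X_0\in\cX$ whose $D$-rank was $k$, Lemma~\ref{lem:decrease_rankX} supplies an arc $e\notin H$ whose removal lowers the rank of some other class $X_1$; one then swaps $e$ into $H$ and removes from $H$ an arc with head in $X_0$. This keeps $|H|$ fixed and keeps $H$ a transversal (the subpartition witness persists), but after finitely many swaps the rank of $X_0$ is restored to $k$ and $X_0$ can be dropped from $\cX$. The same mechanism, applied recursively inside the $W_i$'s, forces elementarity for the modified $H^*$. You invoke Lemma~\ref{lem:decrease_rankX}, but in the wrong direction: it is a tool for \emph{producing} an arc to add to $H$, not a structural statement about the existing $D'$. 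Without these exchanges your choice of $Z_1,Z_2$ via Lemma~\ref{lem:elem2} is not available, and the final inequality does not go through.
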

\ifsodaversion

\else

\begin{proof}
By Claim \ref{cl:Z_1Z_2},
if $W \in \cL$ and $Z_1,Z_2$ are nonempty disjoint subsets of $W$, then there is a transversal of size
$f_W(Z_1)+f_W(Z_2)-k+1$, thus $\gamma$ is at most \eqref{eq:gamma} (this is true even if $\gamma < k$).

To show that equality holds for some $W \in \cL$,
let $H$ be a minimum transversal,
and let $D'=D^+-H$. By Theorem
\ref{thm:minarcset}, there exists $W \in \cL$ and an
$\cL[W]$-compatible subpartition $\cX$ of $W$ such that $H\subseteq D[W]$ and $\sum_{X \in
  \cX} r_{D', W}^\oplus (\delta^{in}_{D'[W]}(X))= k(|\cX|-1)-1$. Let us choose a minimum transversal $H$ for which
  $W$ is the smallest possible, and (subject to that) $\cX$ has the smallest possible cardinality;
this implies that $r_{D', W}^\oplus (\delta^{in}_{D'[W]}(X))<k$ for every $X
\in \cX$.
\begin{claim}
$|\cX|=2$. \label{cl:exchange}
\end{claim}
\begin{proof}
Suppose for contradiction that $|\cX| \geq 3$. Then $0<r_{D',
  W}^\oplus (\delta^{in}_{D'[W]}(X))<k$ for every $X \in \cX$;
furthermore, by the assumption $\gamma \geq k$ and Corollary
\ref{cor:gamma<k}, all of these ranks except for at most one were
originally $k$ in $D$. Let $X_0$ be one of the members of \cX\ for
which $r_{D, W}^\oplus (\delta^{in}_{D[W]}(X_0))= k$, and let $X_1$ be
another member.  Let $E_1= \delta^{in}_{D'[W]}(X_1)$, and consider the
following arc exchange operation.

\paragraph{$(X_0,E_1)$-{\sc exchange}} By Lemma \ref{lem:decrease_rank}, there exists an arc $e\in D'[W]$ such that
$r_{D'-e, W}^\oplus (E_1-e)<r_{D', W}^\oplus (E_1)$.
Choose an arbitrary arc $e_0 \in H$ whose head is in $X_0$
(such an arc exists because $r_{D', W}^\oplus (\delta^{in}_{D'[W]}(X_0))<r_{D, W}^\oplus (\delta^{in}_{D[W]}(X_0))$).
Let $H_1=H-e_0+e$.
\bigskip

By the choice of $H$, there is no $W'
\subset W$ such that $H_1$ is a transversal of $\cL[W']$-tight \karb s in $D'_{W'}$.
By the choice of $e$, $H_1$ is still a transversal of $\cL[W]$-tight \karb s, so it
is a minimum transversal. We can apply the exchange operation repeatedly until we
obtain a minimum transversal $H''$ for which $ r_{D'', W}^\oplus (\delta^{in}_{D''[W]}(X_0))= k$, where $D''=D^+-H''$.
At this
point, $\cX-X_0$ is a good subpartition for $H''$ that has fewer
members than $\cX$, in contradiction to the choice of $H$ and $\cX$.
\end{proof}

We obtained that $\cX$ is a subpartition with two members, so $\cX= \{X_1, X_2\}$ and $r_{D', W}^\oplus
(\delta^{in}_{D'[W]}(X_1))+r_{D', W}^\oplus (\delta^{in}_{D'[W]}(X_2))=
k-1$. The next claim shows that $H$ can be modified so that the arc sets in the formula become \elem.

\begin{claim} \label{cl:Hstar}
There is a minimum transversal $H^*$ of $\cL[W]$-tight \karb s such that $\delta^{in}_{D^*[W]}(X_j)$ is $(D^*,X_j)$-\elem\
and $r_{D^*, W}^\oplus (\delta^{in}_{D^*[W]}(X_j))=r_{D', W}^\oplus (\delta^{in}_{D'[W]}(X_j))$
for $j=1,2$ (where $D^*$ denotes $D^+-H^*$).
\end{claim}
\begin{proof}
If $\delta^{in}_{D'[W]}(X_j)$ is $(D',X_j)$-\elem\ for $j=1,2$, then $H$ has the required properties.
Suppose that $\delta^{in}_{D'[W]}(X_j)$ is not $(D',X_j)$-\elem\ . This means that if we recursively compute the rank
of $\delta^{in}_{D'[W]}(X_j)$, then at some point we have to compute a rank $r_{D',W'}(E')$
for some $W'\in \cL[W]-W$ and some $E'\subseteq \delta_{D'}^{in}(W')$, but the smallest $\cL[W']$-compatible
subpartition $\cY$ that determines $r_{D',W'}(E')$ has at least two members.

Since $0<r_{D',W'}(E')<k$, we have $0<r_{D',
  W'}^\oplus(\delta^{in}_{E'\cup D'[W']}(Y))<k$ for every $Y \subseteq
\cY$. Let $E=E'\cup (H \cap \delta_{D}^{in}(W'))$. By the assumption
$\gamma \geq k$ and Corollary \ref{cor:gamma<k}, we know that
$r_{W'}^\oplus(\delta^{in}_{E \cup D[W']}(Y))=k$ for all but at most
one member of $\cY$; let $Y_0$ be a member for which it is $k$, and
let $Y_1$ be another member.  Let $E_1=\delta^{in}_{E' \cup
  D'[W']}(Y_1)$.  By the same argument as in the proof of Claim
\ref{cl:exchange}, a $(Y_0,E_1)$-{\sc exchange} operation results in a
transversal of the same size as $H$, for which
$r_{D',W'}^\oplus(\delta^{in}_{E' \cup D[W']}(Y_0))$ increases by one.
By applying the exchange operation repeatedly, we eventually obtain a
transversal $H''$ such that $|H''|=|H|$ and $r_{D'',W'}^\oplus
(\delta^{in}_{E'' \cup D''[W']}(Y_0))= k$, where $E''= E - H''$ and
$D''=D^+-H''$. At this point, $\cY-Y_0$ also determines the rank
$r_{D'',W'}(E'') = r_{D',W'}(E')$, and has fewer members than $\cY$.

By repeating this procedure, we eventually obtain a transversal $H^*$ which satisfies the claimed properties.
\end{proof}

Let $H^*$ be the minimum transversal given by Claim \ref{cl:Hstar}. By Lemma \ref{lem:elem2}, there is a nonempty set
$Z_j \subseteq X_j$ such that $r_{D^*, W}^\oplus
(\delta^{in}_{D^*[W]}(X_j))=f_{D^*,W}(Z_j)$, for both $j=1,2$. Thus $f_{D^*,W}(Z_1)+f_{D^*,W}(Z_2)=k-1$.
Since the removal of an arc from $D$ can decrease
$f_{W}(Z_1)+f_{W}(Z_2)$ by at most one, we have $\gamma=|H^*| \geq f_{W}(Z_1)+f_{W}(Z_2)-k+1$. As the reverse
inequality has already been proved, this completes the proof of the theorem.
\end{proof}
\fi

The theorem not only characterizes the minimum size of transversals if $\gamma \geq k$, but also guarantees
the existence of minimum transversals that have a special structure.

\begin{cor}\label{cor:mintrans}
 Suppose that $\gamma \geq k$, and let $(W,Z_1,Z_2)$ be minimizers of \eqref{eq:gamma}.
  Let
 \[E_j=\{e \in D[W]: e \in \delta^{in}(Z_j),\ e \notin \delta^{out}(W')
\text{ if $W' \in \cL[W]$ and $W' \cap Z_j \neq \emptyset$}\}\quad (j=1,2).\]
Then every arc set $H \subseteq E_1 \cup E_2$ of size $|E_1 \cup E_2|-k+1$ is a minimum transversal of the family of
\cL-tight \karb s.
\end{cor}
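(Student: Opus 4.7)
The plan is to verify two things: that $|H|=\gamma$, and that $H$ is indeed a transversal of $\cL$-tight \karb{s}.

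\textbf{Size of $H$.} First I would note that $E_1$ and $E_2$ are disjoint, since each arc in $E_j$ has its head in $Z_j$ and $Z_1 \cap Z_2 = \emptyset$. Therefore $|E_1 \cup E_2| = |E_1|+|E_2| = f_W(Z_1)+f_W(Z_2)$ by the very definition of $f_W$. Consequently $|H| = f_W(Z_1)+f_W(Z_2)-k+1$, which equals $\gamma$ by Theorem~\ref{thm:main} since $(W,Z_1,Z_2)$ achieves the minimum in \eqref{eq:gamma}.

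\textbf{$H$ is a transversal.} Suppose for contradiction that $D'=D^+-H$ still contains an $\cL$-tight \karb, which (after passing to $D^+$) can be viewed as an $\cL$-tight \krarb[s]. Let $F$ be such an arborescence and set $F' = F\cap \delta^{in}_{D^+}(W)$. Because $W\in \cL$ and $F$ is $\cL$-tight, $|F'|=k$ and $F'$ is a base of the matroid $M_{D',W}$ obtained by working in $D'$ (so $r_{D',W}(F')=k$). The key is to apply Lemma~\ref{lem:ranklefW} to $M_{D',W}$ with $E=F'$, once for $Z_1$ and once for $Z_2$:
\begin{align*}
k \;&\le\; f_{D',W}(Z_1) + \varrho_{F'}(Z_1),\\
k \;&\le\; f_{D',W}(Z_2) + \varrho_{F'}(Z_2).
\end{align*}
Summing, and using that $Z_1,Z_2$ are disjoint subsets of $W$ (so the indegrees of $Z_1$ and $Z_2$ in $F'$ together do not exceed $|F'|=k$), we obtain
\[
2k \;\le\; f_{D',W}(Z_1)+f_{D',W}(Z_2) + k,
\]
i.e.\ $f_{D',W}(Z_1)+f_{D',W}(Z_2)\ge k$.

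\textbf{Computing $f_{D',W}$.} On the other hand, because $H\subseteq E_1\cup E_2$ and $E_1\cap E_2=\emptyset$, exactly the arcs of $H\cap E_j$ are removed from those counted by $f_W(Z_j)$, so $f_{D',W}(Z_j) = f_W(Z_j) - |H\cap E_j|$ for $j=1,2$. Adding gives
\[
f_{D',W}(Z_1)+f_{D',W}(Z_2) \;=\; f_W(Z_1)+f_W(Z_2) - |H| \;=\; k-1,
\]
which contradicts the inequality derived in the previous step. Hence no such $F$ exists, so $H$ is a transversal of size $\gamma$, and therefore a minimum transversal.

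The only mildly delicate point is justifying the application of Lemma~\ref{lem:ranklefW} inside $D'$: this is legitimate precisely because the assumed \karb\ $F$ witnesses that the matroid $M_{D',W}$ is well defined and that $F'$ is one of its bases. No other obstacle arises; the rest is bookkeeping about $E_1,E_2$ and $H$.
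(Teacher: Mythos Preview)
Your proof is correct. The size computation and the bookkeeping for $f_{D',W}(Z_j)$ match the paper exactly. For the transversal part, however, the paper takes a shorter route: once $f_{D',W}(Z_1)+f_{D',W}(Z_2)=k-1$ is established, it simply invokes Claim~\ref{cl:Z_1Z_2} (in its contrapositive form) to conclude that $D'$ has no $\cL$-tight \karb. You instead go through Lemma~\ref{lem:ranklefW} applied in $D'$ to an explicit base $F'$ of $M_{D',W}$, which is valid but more elaborate---you are essentially re-deriving the relevant instance of Claim~\ref{cl:Z_1Z_2} from Lemma~\ref{lem:ranklefW} on the fly. Both lemmas rest on the same tail-relocation idea, so the two arguments are close in spirit; the paper's version just avoids introducing the matroid $M_{D',W}$ and the base $F'$ altogether.
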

\begin{proof}
 By Theorem \ref{thm:main}, $\gamma=f_W(Z_1)+f_W(Z_2)-k+1$, so $|H|=\gamma$. Let $D'=D-H$; by definition,
 $f_{D',W}(Z_1)+f_{D',W}(Z_2)=f_W(Z_1)+f_W(Z_2)-|H|$, thus $f_{D',W}(Z_1)+f_{D',W}(Z_2)=k-1$. According to
 Claim \ref{cl:Z_1Z_2}, no \cL-tight \karb\ exists in $D'$, so $H$ is a transversal.
\end{proof}

Using this, we can give a polynomial time algorithm if $k$ is fixed. We check if there is a transversal of size at
most $k-1$ by brute force search. If there is none, then we can use the algorithm \verb#covering_tight_arborescences#  in \cite{mincostarb} to
compute $\min_{W\in\cL}(\min\{f_W(Z_1)+f_W(Z_2):Z_1, Z_2$ are nonempty, disjoint subsets of $W\})$
and minimizers $(W,Z_1,Z_2)$ in polynomial time. We can also determine the arc sets $E_1,E_2$ as in Corollary \ref{cor:mintrans}, so
we can find a transversal of minimum size.

\section{Conclusion}

As the example in Figure \ref{fig:Tamaspelda} shows, the minimum size of a transversal can be smaller than \eqref{eq:gamma}.
To make further progress on the problem, this case should be better understood.
As mentioned at the end of Section \ref{sec:variants}, it can be decided in polynomial time using a weighted
matroid intersection algorithm whether there is an $\cL$-tight \karb; in this sense, the case $\gamma=0$ is well-understood in terms of general
matroid techniques. However, such techniques do not suffice for higher $\gamma$,
as the transversal problem for general matroid intersection (and even for general matroids) is NP-hard.
The algorithm presented in Section \ref{sec:blocking} sidesteps this problem by simply checking for every arc
subset of size at most $k$ whether it is a transversal; this of course means that the algorithm is not even
fixed-parameter tractable for the parameter $k$.
One possible approach to improve this would be to generalize the subpartition-finding algorithms of \cite{kmincostarb} to laminar families.

\bibliographystyle{amsplain} \bibliography{bkmincost}

\ifsodaversion

\newpage

\section{Appendix}

\subsection{Proofs from Section \ref{sec:variants}}

\begin{proof}[Proof of Theorem \ref{thm:versions}]
Problem \ref{prob:2} reduces to Probem \ref{prob:1} by deleting all arcs entering node $s$ from the input digraph. For the other direction, consider an instance $D,k,c$ of Problem \ref{prob:1}, and let $\alpha=|A|+k$, $\beta=\sum_{a\in A}c(a)+1$. Let $(D^+,c^+)$ be the $(\alpha,\beta)$-extension of $(D,c)$. In the instance of Problem \ref{prob:2}
given by $(D^+,k,c^+,s)$, the minimum cost \krarb s
naturally correspond to minimum $c$-cost \karb s in $D$
(since they contain exactly $k$ arcs leaving $s$ because of the value of $\beta$). Moreover, the minimum size of a transversal is at most $|A|$ as $A$ itself is a transversal.
This shows that every minimum transversal is a subset of $A$.
\end{proof}

\begin{proof}[Proof of Corollary \ref{cor:slackness}]
Consider the LP $\min\{cx: x\in \Rset^A,\ 0\le x\le 1,\ \varrho_x(Z)\ge
k$ for ever non-empty $Z\subseteq V-s\}$. By Theorem
\ref{thm:karbhull}, this has an integer optimal solution, which is a
minimum cost \krarb. Let $y^*,z^*$ be an optimal solution of the
dual
\begin{align*}\label{lp:dual}
\max \sum_{\emptyset \neq Z \subseteq V-s} ky_Z-\sum_{a\in A}z_a\\
 y\in \Rset_+^{2^{V-s}-\{\emptyset\}},z\in \Rset_+^A  \\
 \sum_{Z:
  a\in \delta^{in}(Z)}y_Z-z_a  \le c_a \text{ for every } a\in A.
\end{align*}
  We can assume that the support of $y^*$ is a laminar family
$\cL\subseteq 2^V$ by Theorem \ref{thm:karbhull}. The complementary slackness conditions show that a
feasible primal solution $x^*$ is optimal if and only if the following
three conditions hold.
\begin{enumerate}
\item $x^*_a=0$ for every $a\in A$ with $\sum_{Z: a\in \delta^{in}(Z)}y^*_Z-z^*_a< c_a$ (\textbf{forbidden} arcs),
\item $\varrho_{x^*}(W)=k$ for every $W\in \cL$, and
\item $x^*_a=1$ for every $a\in A$ with $z^*_a>0$ (\textbf{mandatory} arcs).
\end{enumerate}

By denoting the forbidden arcs by $A_0$ and the mandatory arcs by $A_1$ we obtain the required structure.
\end{proof}

\subsection{Proofs of Theorem \ref{thm:indepkarb} and Lemma \ref{lem:rankin}}

\begin{proof}[Proof of Theorem \ref{thm:indepkarb}]
The necessity of \eqref{eq:subpart} is clear: if $F\subseteq A$ is a
\indep\ \karb\ and \cX\ is a subpartition of $V$, then $k(|\cX|-1)\le
\sum_{X\in \cX} \varrho_F(X)\le \sum_{X\in
  \cX}r^\oplus(\delta^{in}_{D}(X))$.  In order to prove sufficiency,
let $M_1=(A,r_1)$ be  $k$ times the circuit matroid of the underlying undirected graph of $D$. Note that condition \eqref{eq:subpart}
implies that $D$ contains $k$ edge-disjoint spanning trees, thus
$r_1(A)=k(|V|-1)$. For every $v\in V$, let $M_v'=(\delta^{in}_D(v),
r'_v)$ be the $k$-shortening of $M_v$, that is $r'_v(E)=\min\{r_v(E),
k\}$ for every $E\subseteq \delta^{in}_D(v)$. Let furthermore $M_2=(A,
r_2)$ be the direct sum of the matroids
$M'_v$. Observe that $F\subseteq A$ is a \indep\ \karb\ in $D$ if and
only if $F$ is a common independent set of $M_1$ and $M_2$ and has
size $k(|V|-1)$. By Edmonds' matroid intersection theorem \cite{edmonds},
such an $F$ exists if and only if
\begin{equation}\label{eq:matroidint}
r_1(E)+ r_2(A-E)\ge k(|V|-1)\mbox{ for every } E \subseteq A.
\end{equation}
We show that condition \eqref{eq:subpart} implies
\eqref{eq:matroidint}. Suppose that \eqref{eq:matroidint} fails for
some $E$. Clearly, we can assume that $E$ is closed in $M_1$ and
$M_1|E$ does not contain bridges (a \textbf{bridge} in a matroid is an element
that is contained in every base).
\begin{claim}
If $E\subseteq A$ is closed in $M_1$ and $M_1|E$ does not contain
bridges, then there exists a partition $\cY$ of $V$
such that $r_1(D[Y])=k(|Y|-1)$ for every $Y\in \cY$ and
$E=\cup_{Y\in \cY}D[Y]$.
\end{claim}
\begin{proof}
We say that a non-empty $Y\subseteq V$ is \textbf{tight} (with respect
to $E$) if $r_1(E[Y])= k(|Y|-1)$. In other words, $Y$ is tight if
$E[Y]$ contains $k$ edge-disjoint  trees, each spanning $Y$. For example, sets of
size 1 are tight. If $Y_1, Y_2$ are both tight and $Y_1\cap Y_2\ne
\emptyset$, then $Y_1\cup Y_2$ is tight, too. To prove this, let
$T_1\subseteq E$ be a tree spanning $Y_1$ and $T_2\subseteq E$ be a
tree spanning $Y_2$, and observe that $T_1$ can be extended to a tree
spanning $Y_1\cup Y_2$ using the edges of $T_2-E[Y_1]$. Therefore let
$\cY$ be the partition of $V$ consisting of the maximal tight
sets. Since $E$ is closed in $M_1$, it contains every arc of $D$ that
is induced in some $Y\in \cY$. Let $G'=(V', E')$ be the graph
obtained from $(V,E)$ after contracting every $Y\in \cY$ into a node
$y$. We claim
that $i_{G'}(Z)<k(|Z|-1)$ for every $Z\subseteq V'$ with $|Z|\ge
2$. Suppose not and take an inclusionwise minimal set
$Z$ with $i_{G'}(Z) \geq k(|Z|-1)$. Then $G'[Z]$ contains $k$ edge-disjoint spanning trees by the theorem of Tutte and Nash-Williams \cite{tutte1961problem},
which contradicts the maximality of the tight sets in
$\cY$. This implies that the bases of $M_1|E$ contain every arc of $E$
going between different members of the partition $\cY$. But since
$M_1|E$ does not contain bridges, $E=\cup_{Y\in \cY}D[Y]$, as claimed.
\end{proof}
Consider the partition \cY\ defined in the claim above, and observe that
$r_1(\cup_{Y\in \cY}D[Y]) + r_2(\cup_{Y\in \cY}\delta^{in}_D(Y))
=k(|V|-|\cY |) + \sum_{Y\in \cY} r_2(\delta^{in}_D(Y)) < k(|V|-1) $, thus $\sum_{Y\in \cY} r_2(\delta^{in}_D(Y)) < k(|\cY |-1)$. Let
$\cX=\{Y\in \cY: r_2(Y)<k \}$ and note that $\sum_{X\in \cX}
r_2(\delta^{in}_D(X)) < k(|\cX|-1)$ holds as well.  But
$r_2(\delta^{in}_{D}(X))=r^\oplus(\delta^{in}_{D}(X))$ for every $X\in \cX$, thus we get a contradiction
with \eqref{eq:subpart}.
\end{proof}

\begin{proof}[Proof of Lemma  \ref{lem:rankin}]
It is clear that \eqref{it:1} implies \eqref{it:2}.
Let us prove that \eqref{it:2} implies \eqref{it:1}.  Let $D'=I\cup
D[V-s]$.
We will prove that there exists a \indep\  \karb\ in $D'$ by applying Theorem
\ref{thm:indepkarb}. Suppose that $\sum
\{r^\oplus(\delta^{in}_{D'}(X)): X\in \cX\} < k(|\cX|-1)$ for some
subpartition \cX. Note that we can assume
$r^\oplus(\delta^{in}_{D'}(X)) < k$ for every member $X$ of \cX, and
clearly $|\cX|>1$ has to hold. Therefore there must exist a member
$X\in\cX$ with $s\notin X$ and $r^\oplus(\delta^{in}_{D'}(X))<k$,
contradicting \eqref{it:2}.

Next we show that \eqref{it:1} implies \eqref{it:3}.  If $F\subseteq
A$ is a \indep\  \krarb[s] with $I=F\cap \delta^{out}_D(s)$,
$E\subseteq \delta^{out}_D(s)$, and $X\subseteq V-s$, then $k\le
\varrho_F(X) = \varrho_{F\cap E}(X)+ \varrho_{F- E}(X) \le |F\cap E| +
r^\oplus(\delta^{in}_{D-E}(X)) = |I\cap E| +
r^\oplus(\delta^{in}_{D-E}(X))$.  Finally, we show that \eqref{it:3}
implies \eqref{it:2}. Take some non-empty $X\subseteq V-s$, let
$E=(\delta_D^{out}(s)\cap \delta_D^{in}(X))-I$ and apply
the property in \eqref{it:3} for $X$ and $E$ to obtain \eqref{it:2}.
\end{proof}

\subsection{Proof of Theorem \ref{thm:supermod}}

For the proof of Theorem \ref{thm:supermod} we need the following claims.

\begin{claim}\label{cl:rplus}
Let $E_1, E_2\subseteq \delta_D^{out}(s)$ and  $X_1, X_2\in
V-s$ be arbitrary, then
\begin{equation}
r^\oplus(\delta^{in}_{D-E_1}(X_1))+ r^\oplus(\delta^{in}_{D-E_2}(X_2)) \ge r^\oplus(\delta^{in}_{D-(E_1\cup E_2)}(X_1\cup X_2)) + r^\oplus(\delta^{in}_{D-(E_1\cap E_2)}(X_1\cap X_2)).
\end{equation}
\end{claim}
\begin{proof}
By the properties of the direct sum, it is enough to show the following for an arbitrary $v\in V$, where $\Delta$ denotes $\delta^{in}_D(v)$.
\begin{equation}\label{eq:rvsub}
r_v(\delta^{in}_{\Delta-E_1}(X_1))+ r_v(\delta^{in}_{\Delta-E_2}(X_2)) \ge r_v(\delta^{in}_{\Delta-(E_1\cup E_2)}(X_1\cup X_2)) + r_v(\delta^{in}_{\Delta-(E_1\cap E_2)}(X_1\cap X_2)).
\end{equation}
If $v\notin X_1\cup X_2$, then there is nothing to prove, every term is
zero on both sides of \eqref{eq:rvsub}. If $v\in X_1-X_2$, then the
second term is zero on both sides of \eqref{eq:rvsub}, and the inequality
$r_v(\delta^{in}_{\Delta-E_1}(X_1))\ge
r_v(\delta^{in}_{\Delta-(E_1\cup E_2)}(X_1\cup X_2))$ is
implied by the mononicity of $r_v$. Clearly, the case $v\in X_2-X_1$
is analogous, therefore assume $v\in X_1\cap X_2$. Observe that \eqref{eq:rv1} and \eqref{eq:rv2} holds. For an illustration, see Figure \ref{fig:rv}.
\begin{eqnarray}\label{eq:rv1}
\delta^{in}_{\Delta-E_1}(X_1)\cap \delta^{in}_{\Delta-E_2}(X_2) = \delta^{in}_{\Delta-(E_1\cup E_2)}(X_1\cup X_2)\\
\label{eq:rv2} \delta^{in}_{\Delta-E_1}(X_1)\cup \delta^{in}_{\Delta-E_2}(X_2) = \delta^{in}_{\Delta-(E_1\cap E_2)}(X_1\cap X_2).
\end{eqnarray}
\begin{figure}
\begin{center}
\scalebox{0.8}{\input{rv.pdf_t}}
\caption{An illustration for proving \eqref{eq:rv1} and \eqref{eq:rv2}. The arcs of $(E_1-E_2)\cap \delta_D^{in}(v)$ are coloured blue, those in $(E_2-E_1)\cap \delta_D^{in}(v)$ are red, and those in $(E_1\cap E_2)\cap \delta_D^{in}(v)$ are magenta. That is, $\Delta-E_1$ is the set of arcs in the figure that are neither blue, nor magenta, etc.}
\label{fig:rv}
\end{center}
\end{figure}
This, together with the submodularity of $r_v$, finishes the proof.
\end{proof}


\newcommand{\dels}{\ensuremath{\delta_D^{out}(s)}}

Let us introduce the following notation. For a set $E\subseteq
\delta_D^{out}(s)$, let $X_E\subseteq V-s$ be an arbitrary subset that
attains the maximum in the definition \eqref{eq:pZ} of $p(E)$ (that
is, $X_E\ne \emptyset$ and $p(E)=k-r^\oplus(\delta^{in}_{D-E}(X_E))$).

\begin{claim}\label{cl:headin}
If $E\subseteq \dels$ is non-separable, then $X_E$ contains the head of every arc of $E$.
\end{claim}
\begin{proof}
Suppose not and let $E_1\subsetneq E$ be the subset of those arcs which
have their head in $X_E$. Then
$p(E)=k-r^\oplus(\delta^{in}_{D-E}(X_E))=k-r^\oplus(\delta^{in}_{D-E_1}(X_E))\le
p(E_1)$. But then
$p(E)\le p(E_1)+p(E-E_1)$ by the non-negativity of $p$, contradicting
the non-separability of $E$.
\end{proof}

\begin{proof}[Proof of Theorem \ref{thm:supermod}]
Let $E_1, E_2\subseteq\delta^{out}_D(s) $ be non-separable sets so
that $E_1\cap E_2\ne \emptyset$. By Claim \ref{cl:headin}, $X_i=X_{E_i}$ contains the head of each arc of $E_i$ for both $i=1,2$.
This implies that $X_1\cap X_2\ne \emptyset$, and  Claim \ref{cl:rplus} gives
\begin{multline*}
p(E_1)+p(E_2)= \sum_{i=1,2}k-r^\oplus(\delta^{in}_{D-E_i}(X_i))\le \\
2k-\left(r^\oplus(\delta^{in}_{D-(E_1\cup E_2)}(X_1\cup X_2)) + r^\oplus(\delta^{in}_{D-(E_1\cap E_2)}(X_1\cap X_2))\right)\le
p(E_1\cap E_2)+p(E_1\cup E_2).
\end{multline*}
\end{proof}

\subsection{Proofs of  Claim  \ref{cl:wedge} and Corollary \ref{cor:indepmatroid}}

\begin{proof}[Proof of Claim  \ref{cl:wedge}]
Let $E\subseteq \delta^{out}_D(s)$ and let \cH\ be a partition of
$E$ that gives $p^\wedge(E)=\sum \{p(H): H\in \cH\}$ and, subject to
this, $|\cH|$ is minimal. Clearly, every $H\in \cH$ is non-separable.
We claim that $\{X_H: H\in \cH\}$ is a subpartition of $V-s$. If there
exist $H_1, H_2\in \cH$ so that $X_{H_1}\cap X_{H_2}\ne \emptyset$,
then (by Claim \ref{cl:rplus}) $p(H_1)+p(H_2)\le 2k-(r^\oplus(\delta^{in}_{D-(H_1\cap
  H_2)}(X_{H_1}\cap X_{H_2})) + r^\oplus(\delta^{in}_{D-(H_1\cup
  H_2)}(X_{H_1}\cup X_{H_2})))\le p(H_1\cup H_2) + p(\emptyset) =
p(H_1\cup H_2)$, therefore $\cH'=\cH-\{H_1, H_2\} + \{H_1\cup H_2\}$
also gives $p^\wedge(E)=\sum \{p(H): H\in \cH'\}$, contradicting our
choice of \cH.
\end{proof}

\begin{proof}[Proof of Corollary \ref{cor:indepmatroid}]
Consider the function $p^\wedge$ defined by \eqref{eq:pwedge2}.
By Theorem \ref{thm:wedge}, $p^\wedge$ is monotone increasing and
supermodular, and $P=\{x\in \Rset^{\delta^{out}(s)}:
x(E)\ge p^\wedge(E)\mbox{ for every }E\subseteq
\delta^{out}_D(s)\}$ if $P$ is non-empty. Thus $B=\{x\in P: x( \delta^{out}_D(s)) = k\}$
is not empty if and only if $P\ne \emptyset$ and
$p^\wedge(\delta^{out}_D(s))=k$, that is, if and
only if both \eqref{it:Pnempty} and \eqref{it:Bnempty} hold.
Since the fully supermodular function describing the base polyhedron $B$ is $p^\wedge$, it is the co-rank function of the matroid $M^s$, and its rank function is given by the formula
\begin{multline*}
 r^s(E)=p^\wedge(\delta^{out}_D(s))- p^\wedge(\delta^{out}_D(s)-E)=k-p^\wedge(\delta^{out}_D(s)-E) \\= \min\{\sum_{X\in \cX} r^\oplus(\delta^{in}_{E\cup D[V-s]}(X)) - k(|\cX|-1): \cX \mbox{ is a subpartition of }V-s\}.
 \end{multline*}

\end{proof}

\subsection{Proof of Theorem \ref{thm:MF}}

\begin{proof}[Proof of Theorem \ref{thm:MF}]
We recursively show that the family $\cB_W$ indeed defines a matroid $M_W$ for every $W\in \cL$. For
the singletons $\{v\}\in \cL$ it is clear that $M_{\{v\}}$ is the
uniform matroid of rank $k$ on ground set $\delta_{D^+}^{in}(v)$. Let $W\in
\cL$ be a non-singleton, and assume that $M_{W'}$ has already been
defined for every $W'\in \cL$ that is a proper subset of $W$. Let $W_1, W_2,
\dots , W_l$ be the maximal members of $\cL[W]-W$, and let us contract each
$W_i$ into a single node $w_i$ ($i=1,2,\dots,l$).  Let
 $\Wc = W/\{W_1, W_2, \dots , W_l\}$ be the set obtained
from $W$ by these contractions, and
similarly, for a subgraph $(W+s_W, E)$ of $D_W$ we use the
notation $\Ec = E/\{W_1, W_2, \dots , W_l\}$ to mean the graph
obtained from $(W+s_W, E)$ by the contractions (and deletion of the loops that arise).
In particular, let $\Dc=D_{W}/\{W_1, W_2,
\dots , W_l\}$. The matroids $M_{W_i}$ naturally give rise to matroids
$M_{w_i}=(\delta^{in}_{\Dc}(w_i), r_{w_i})$ for
every $i$; let $\cM=\{M_{w_1}, \dots, M_{w_l}\}$.
\begin{claim}
If $F\subseteq A(D_{W})$ is an $\cL[W]$-tight
\krarb[s_W], then $\Fc$ is
\cM-\indep. Conversely, if
$F'\subseteq \Dc$ is an \cM-\indep\  \krarb[s_W] in $\Dc$ and
$|\delta_{F'}^{out}(s_W)|=k$, then there exists an $\cL[W]$-tight
\krarb[s_W] $F\subseteq A(D_{W})$ such that $\Fc=F'$.
\end{claim}
\begin{proof}
The first statement is clear from the definition of the matroids
$M_{w_i}$. For the other direction, let $F'\subseteq \Dc$ be an
\cM-\indep\ \krarb[s_W] in $\Dc$, such that $|\delta_{F'}^{out}(s_W)|=k$.
Consider $F'$ as a
subgraph of $D_W$, and note that $\delta^{in}_{F'}(W_i)$ is a base of
$M_{W_i}$ for every $i$. By the definition of $M_{W_i}$,
$\delta^{in}_{F'}(W_i)$ can be extended to an $\cL[W_i]$-tight
arborescence $F_i$ in $D_{W_i}$ for every $i$. The \krarb[s_W] $F=F'\bigcup \cup_iF_i$ is $\cL[W]$-tight
and $\Fc= F'$, as required.
\end{proof}

The claim implies that $\cB_W$ consists of the arc sets of size $k$ that can be obtained as the arcs incident to $s_W$
of an \cM-\indep\ \krarb[s_W], so the statement of the theorem
follows from Corollary \ref{cor:indepmatroid}.
\end{proof}

\subsection{Proof of Theorem \ref{thm:optimalroot}}

In order to prove Theorem \ref{thm:optimalroot} we introduce the following transformation.

\newcommand{\mandconstr}{{\sc mandatory arc transformation}}
\begin{figure}
\begin{center}
\scalebox{0.8}{\input{mand_constr.pdf_t}}
\caption{An illustration for the \mandconstr.}
\label{fig:mand}
\end{center}
\end{figure}

\paragraph{\mandconstr} Given a digraph $D=(V,A)$, a node $s\in V$, an
arc $a=uv$ (where $u,v\in V-s$), and a laminar family $\cL\subseteq
2^{V-s}$, we construct a digraph $D'=(V+x_a, A-a+B_a)$, where
$B_a=\{ux_a, x_av\}\cup\{k-1$ parallel copies of $vx_a\}$.  Let
furthermore $\cL'\subseteq 2^{V+x_a}$ be defined as $\cL'=\{W\in \cL:
v\not\in W\}\cup\{W+x_a: v\in W\in \cL\}$ (note that $\{v\}\in \cL$ implies that $\{x_a,v\}\in \cL'$).
See Figure \ref{fig:mand} for an illustration. It is
easy to check that $\cL'$ is laminar.

\begin{claim}\label{cl:phi}
For an \cL-tight \krarb\ $F\subseteq A$ containing $a=uv$, let
$\phi(F) = F-a+B_a$. Then $\phi$ is a bijection between \cL-tight
\krarb{s} containing $a$ in $D$ and $\cL'$-tight \krarb{s} in $D'$.
\end{claim}
\begin{proof}
First we show that if $F\subseteq A$ is an \cL-tight \krarb\  containing $a$, then $\phi(F)$
is an $\cL'$-tight \krarb\ in $D'$. Let $F_1, F_2,
\dots, F_k$ be a decomposition of $F$ into $k$ $s$-rooted
arborescences and assume that $a\in F_1$. Let
$F_1'=F_1-a+\{ux_a,x_av\}$, and let $F_i'=F_i$ plus a copy of the arc
$vx_a$ for every $i=2,\dots, k$. Then $F_1', F_2', \dots, F_k'$ is a
decomposition of $\phi(F)$ into $k$ $s$-rooted arborescences in $D'$,
so $\phi(F)$ is indeed an \krarb\ in $D'$. Furthermore, $\phi(F)$
is $\cL'$-tight, as $\phi(F)[\{x_a,v\}]$ is a \karb, and the indegree
of any other set $W\in \cL'$ in the subgraph $\phi(F)$ is $k$.

For the other direction, let $F'\subseteq A'$ be an arbitrary
$\cL'$-tight \krarb\ in $D'$. Since $F'[\{x_a,v\}]$ is a \karb\ and
$\varrho_{F'}(x_a)=k$, $B_a\subseteq F'$ must hold. Let
$F=F'-B_a+a$; we show that $F$ is a \cL-tight \krarb\ in $D$ --
since $a\in F$ and $F'=\phi(F)$, this completes the proof. Let $F_1',
F_2', \dots, F_k'$ be a decomposition of $F'$ into $k$ $s$-rooted
arborescences in $D'$, and assume that $ux_a\in F_1'$. Then clearly
$x_av$ is in $F_1'$ too, so $F_1'-\{ux_a,x_av\}+a, F_2'-vx_a, F_3'-vx_a,
\dots, F_k'-vx_a$ is a decomposition of $F$ into $k$ $s$-rooted
arborescences in $D$. The \cL-tightness of $F$ can be shown similarly.
\end{proof}

Using this transformation we can now prove Theorem \ref{thm:optimalroot}.

\begin{proof}[Proof of Theorem \ref{thm:optimalroot}]
Given a digraph $D=(V, A)$ and a cost function $c:A\to \Rset$, let $\alpha=k+1$,
$\beta=\sum_{a\in A}c(a)+1$, and let $(D^+,c^+)$ be the $(\alpha,\beta)$-extension of $(D,c)$.
By previous remarks, optimal \karb{s} in $D$ and optimal \karb{s} in $D^+$
correspond to each other in a natural way (and \karb{s} in $D^+$ are
rooted at $s$).  By Corollary \ref{cor:slackness}, there exists a laminar family
$\cL\subseteq 2^V$ and two disjoint sets $A_0, A_1\subseteq A^+$, such
that a \karb\ $F\subseteq A^+$ is
optimal if and only if $A_1\subseteq F\subseteq A^+-A_0$ and $F$ is
$\cL$-tight. Due to symmetry, $A_1$ contains either all or none of
the parallel arcs between $s$ and a given node $v\in V$. Since there are $k+1$
parallel arcs, the former is impossible, so $A_1\subseteq A$.

Starting with $D^+-A_0$, repeat the \mandconstr\  above for
every $a\in A_1$, to obtain $D'=(V+s+\{x_a:a\in A_1\}, A^+-(A_0\cup
A_1)+\cup_{a\in A_1}B_a))$ and the laminar family $\cL'\subseteq
2^{V+\{x_a:a\in A_1\}}$. For any $\cL$-tight \krarb\ $F\subseteq A^+$
with $A_1\subseteq F\subseteq A^+-A_0$, let $\phi(F)=F-A_1+\cup_{a\in
  A_1}B_a$. By Claim \ref{cl:phi}, $\phi$ defines a bijection between
$\cL$-tight \krarb{s} in $D^+-A_0$ containing $A_1$ and $\cL'$-tight
\krarb{s} in $D'$. By Corollary \ref{cor:matr}, the family
$\{I\subseteq \delta^{out}_{D'}(s): |I|=k$ and $I$ is contained in a   $\cL'$-tight
\krarb\ of $D'\}$ is the family of bases of a matroid. This implies that the convex hull of root vectors of
optimal $k$-arborescences in $D$ is a base polyhedron.
\end{proof}

\subsection{Proofs from Section \ref{sec:blocking}}

\begin{proof}[Proof of Lemma \ref{lem:rank1}]
Let $E'=E-e$. The inequalities $r_{D',W}(E')\leq r_W(E')\leq r_W(E)$ follow from the definition of the rank. We prove the remaining
inequality by induction on the size of $\cL[W]$; it is clearly true if $W$ is a singleton.
Otherwise, by Corollary \ref{cor:recursive}, there is an $\cL[W]$-compatible subpartition $\cX$ of $W$ that determines $r_{D',W}(E')$, i.e.\
$r_{D',W}(E')=\sum_{X \in \cX} r_{D', W}^\oplus (\delta^{in}_{E' \cup D'[W]}(X))- k(|\cX|-1)$.
We know by induction that
$r_{D', W}^\oplus (\delta^{in}_{E' \cup D'[W]}(X))\geq r_W^\oplus (\delta^{in}_{E \cup D[W]}(X))-1$ for every $X \in \cX$,
and the ranks are different for at most one member of $\cX$,
since $e\in D_{W_i}$ for at most one $W_i$. This proves the inequality because
$r_W(E) \leq \sum_{X \in \cX} r_W^\oplus (\delta^{in}_{E \cup D[W]}(X))- k(|\cX|-1)$.
\end{proof}

\begin{proof}[Proof of Theorem \ref{thm:minarcset}]
First note that $|W|>1$, since $H\subseteq A$. As $H \cap D_W$ is a transversal of $\cL[W]$-tight \krarb[s_{W}]s
(and hence of $\cL$-tight \karb s), minimality of $H$ implies that $H\subseteq D_W$.
Let $W_1, \dots, W_l$ be the maximal members of $\cL[W]-W$. By the choice of $W$, $D'_{W_i}$ contains an
$\cL[W_i]$-tight \krarb[s_{W_i}] for every $i$, thus $r_{D', W}^\oplus$ is well-defined.

Since $D'_{W}$ does not contain an $\cL[W]$-tight \krarb[s_{W}],
\eqref{it:Pnempty} or \eqref{it:Bnempty} fails to hold in Corollary \ref{cor:indepmatroid} for $r_{D', W}^\oplus$.
Suppose that $r_{D', W}^\oplus(\delta^{in}_{D'_W}(X))<k$ for some $\cL[W]$-compatible subset
$X \subseteq W$. Then there is a set $W_i$ such that
$r_{D',W_i}(\delta^{in}_{D'}(W)\cap \delta^{in}_{D'}(W_i))<k$. However, since we did not delete any arc leaving $s$, and already the arcs going from $s$ to $W_i$ have rank $k$ in $M_{W_i}$, we get (by monotonicity of $r_{W_i}$) that $r_{W_i}(\delta^{in}_{D'}(W)\cap \delta^{in}_{D'}(W_i))=k$, a contradiction.


Thus \eqref{it:Bnempty} fails to hold in Corollary \ref{cor:indepmatroid}, that is,
$\sum_{X \in \cX} r_{D', W}^\oplus (\delta^{in}_{D'[W]}(X))< k(|\cX|-1)$ for some
$\cL[W]$-compatible subpartition $\cX$ of $W$. As $H$ is inclusionwise minimal and the removal of an arc can
decrease a rank by at most one according to Lemma \ref{lem:rank1}, the left hand side must be equal to
$k(|\cX|-1)-1$. Since the formula involves only arcs in $D'[W]$, minimality also implies that $H \subseteq D[W]$.
\end{proof}

\begin{proof}[Proof of Lemma \ref{lem:decrease_rank}]
The proof is by induction on $|W|$; the claim is clearly true if $W$ is a singleton.
Let $W_1, \dots, W_l$ be the maximal members of $\cL[W]-W$.
By Corollary \ref{cor:recursive}, there exists an $\cL[W]$-compatible subpartition \cX\ of $W$ that determines $r_W(E)$.
We can choose a set $X\in \cX$ and an index $i$  for which $W_i\subseteq X$ and
\[0<r_{W_i}(\delta^{in}_{E \cup D[W]}(X)\cap \delta^{in}_{E \cup D[W]}(W_i))<k.\]
Let $\Delta$ denote $\delta^{in}_{E \cup D[W]}(X)\cap \delta^{in}_{E \cup D[W]}(W_i)$. By induction, there is an arc
$e\in \Delta \cup D[W_i]$ such that either $D'_{W_i}$ does not contain an $\cL[W_i]$-tight \krarb[s_{W_i}]
(where $D'$ is the digraph obtained by removing $e$),
or $r_{D',W_i}(\Delta-e)=r_{W_i}(\Delta)-1$.
The latter possibility means that $r_{D',W}(E-e)< r_W(E)$; on the other hand, the rank can decrease by at
most one by Lemma \ref{lem:rank1}.
\end{proof}

\begin{proof}[Proof of Lemma \ref{lem:ranklefW}]
By the definition of the rank, there exists an $\cL[W]$-tight \krarb[s_W] $F$ such that $|F\cap E|= r_W(E)$.
We apply the tail-relocation operation
described in the proof of Claim \ref{cl:Z_1Z_2}; let $D'$ be the modified digraph, and let $F'$ be the $\cL[W]$-tight \krarb[s_W] obtained from $F$.
On one hand, $f_W(Z)=f_{D',W}(Z)=\varrho_{D'[W]}(Z)$. On the other hand,
\[k \leq \varrho_{F'}(Z) \leq \varrho_{D'[W]}(Z)+\varrho_{E\cap F}(Z)+ \varrho_{F-E}(W)\leq \varrho_{D'[W]}(Z)+\varrho_{E}(Z)+(k-r_W(E)),\]
so $r_W(E) \leq \varrho_{D'[W]}(Z)+ \varrho_{E}(Z)=f_W(Z)+\varrho_E(Z)$, as required.
\end{proof}

\begin{proof}[Proof of Lemma \ref{lem:elem}]
By Lemma \ref{lem:ranklefW}, $r_W(E)\le\min\{f_W(Z)+\varrho_E(Z):\emptyset\ne Z\subseteq W\}$. We prove the other direction by induction on the size of $W$. If
$|W|=1$, then equality holds for $Z=W$, because we assumed that
  $r_W(E)<k$. If $|W|>1$, then let $W_1,\dots,W_l$ be the maximal
  members of $\cL[W]-W$.  Since $E$ is $W$-\elem, there is a
  $\cL[W]$-compatible set $\emptyset \neq X \subseteq W$ such that
  $r_{W}(E)=r_{W}^\oplus(\delta^{in}_{E\cup D[W]}(X))$ and
  $E_i:=\delta^{in}_{E\cup D[W]}(X) \cap \delta^{in}(W_i)$ is
  $W_i$-\elem\ for every $i$.  We may assume that $X=\cup_{i=1}^tW_i$
  for some $1\le t\le l$, and thus $r_{W}(E)=\sum_{i=1}^t
  r_{W_i}(E_i)$. By induction, there exist nonempty $Z_i \subseteq
  W_i$ ($i=1,\dots,t$) such that $r_{W_i}(E_i)=f_{W_i}(Z_i) +
  \varrho_{E_i}(Z_i)$.  Let $Z=\cup_{i=1}^t Z_i$. Observe that an arc
  entering $W_i$ but not entering $X$ does not contribute to $f_{W}(Z)
  + \varrho_{E}(Z)$, thus $f_{W}(Z) + \varrho_{E}(Z) =\sum_{i=1}^t
  (f_{W_i}(Z_i) + \varrho_{E_i}(Z_i))=r_{W}(E)$.
\end{proof}

\subsection{Proof of Theorem \ref{thm:main}}

\begin{proof}[Proof of Theorem \ref{thm:main}]
By Claim \ref{cl:Z_1Z_2},
if $W \in \cL$ and $Z_1,Z_2$ are nonempty disjoint subsets of $W$, then there is a transversal of size
$f_W(Z_1)+f_W(Z_2)-k+1$, thus $\gamma$ is at most the minimum on the right hand side of \eqref{eq:gamma} (this is true even if $\gamma < k$).

To show that equality holds for some $W \in \cL$,
let $H$ be a minimum transversal,
and let $D'=D^+-H$. By Theorem
\ref{thm:minarcset}, there exists $W \in \cL$ and an
$\cL[W]$-compatible subpartition $\cX$ of $W$ such that $H\subseteq D[W]$ and $\sum_{X \in
  \cX} r_{D', W}^\oplus (\delta^{in}_{D'[W]}(X))= k(|\cX|-1)-1$. Let us choose a minimum transversal $H$ for which
  $W$ is the smallest possible, and (subject to that) $\cX$ has the smallest possible cardinality;
this implies that $r_{D', W}^\oplus (\delta^{in}_{D'[W]}(X))<k$ for every $X
\in \cX$.
\begin{claim}
$|\cX|=2$. \label{cl:exchange}
\end{claim}
\begin{proof}
Suppose for contradiction that $|\cX| \geq 3$. Then $0<r_{D',
  W}^\oplus (\delta^{in}_{D'[W]}(X))<k$ for every $X \in \cX$;
furthermore, by the assumption $\gamma \geq k$ and Corollary
\ref{cor:gamma<k}, all of these ranks except for at most one were
originally $k$ in $D$. Let $X_0$ be one of the members of \cX\ for
which $r_{D, W}^\oplus (\delta^{in}_{D[W]}(X_0))= k$, and let $X_1$ be
another member.  Let $E_1= \delta^{in}_{D'[W]}(X_1)$, and consider the
following arc exchange operation.

\paragraph{$(X_0,E_1)$-{\sc exchange}} By Lemma \ref{lem:decrease_rank}, there exists an arc $e\in D'[W]$ such that
$r_{D'-e, W}^\oplus (E_1-e)<r_{D', W}^\oplus (E_1)$.
Choose an arbitrary arc $e_0 \in H$ whose head is in $X_0$
(such an arc exists because $r_{D', W}^\oplus (\delta^{in}_{D'[W]}(X_0))<r_{D, W}^\oplus (\delta^{in}_{D[W]}(X_0))$).
Let $H_1=H-e_0+e$.
\bigskip

By the choice of $H$, there is no $W'
\subset W$ such that $H_1$ is a transversal of $\cL[W']$-tight \karb s in $D'_{W'}$.
By the choice of $e$, $H_1$ is still a transversal of $\cL[W]$-tight \karb s, so it
is a minimum transversal. We can apply the exchange operation repeatedly until we
obtain a minimum transversal $H''$ for which $ r_{D'', W}^\oplus (\delta^{in}_{D''[W]}(X_0))= k$, where $D''=D^+-H''$.
At this
point, $\cX-X_0$ is a good subpartition for $H''$ that has fewer
members than $\cX$, in contradiction to the choice of $H$ and $\cX$.
\end{proof}

We obtained that $\cX$ is a subpartition consisting of two members, so $\cX= \{X_1, X_2\}$ and $r_{D', W}^\oplus
(\delta^{in}_{D'[W]}(X_1))+r_{D', W}^\oplus (\delta^{in}_{D'[W]}(X_2))=
k-1$. The next claim shows that $H$ can be modified so that the arc sets in the formula become \elem.

\begin{claim} \label{cl:Hstar}
There is a minimum transversal $H^*$ of $\cL[W]$-tight \karb s such that $\delta^{in}_{D^*[W]}(X_j)$ is $(D^*,X_j)$-\elem\
and $r_{D^*, W}^\oplus (\delta^{in}_{D^*[W]}(X_j))=r_{D', W}^\oplus (\delta^{in}_{D'[W]}(X_j))$
for $j=1,2$ (where $D^*$ denotes $D^+-H^*$).
\end{claim}
\begin{proof}
If $\delta^{in}_{D'[W]}(X_j)$ is $(D',X_j)$-\elem\ for $j=1,2$, then $H$ has the required properties.
Suppose that $\delta^{in}_{D'[W]}(X_j)$ is not $(D',X_j)$-\elem\ . This means that if we recursively compute the rank
of $\delta^{in}_{D'[W]}(X_j)$, then at some point we have to compute a rank $r_{D',W'}(E')$
for some $W'\in \cL[W]-W$ and some $E'\subseteq \delta_{D'}^{in}(W')$, but the smallest $\cL[W']$-compatible
subpartition $\cY$ that determines $r_{D',W'}(E')$ has at least two members.

Since $0<r_{D',W'}(E')<k$, we have $0<r_{D',
  W'}^\oplus(\delta^{in}_{E'\cup D'[W']}(Y))<k$ for every $Y \subseteq
\cY$. Let $E=E'\cup (H \cap \delta_{D}^{in}(W'))$. By the assumption
$\gamma \geq k$ and Corollary \ref{cor:gamma<k}, we know that
$r_{W'}^\oplus(\delta^{in}_{E \cup D[W']}(Y))=k$ for all but at most
one member of $\cY$; let $Y_0$ be a member for which it is $k$, and
let $Y_1$ be another member.  Let $E_1=\delta^{in}_{E' \cup
  D'[W']}(Y_1)$.  By the same argument as in the proof of Claim
\ref{cl:exchange}, a $(Y_0,E_1)$-{\sc exchange} operation results in a
transversal of the same size as $H$, for which
$r_{D',W'}^\oplus(\delta^{in}_{E' \cup D[W']}(Y_0))$ increases by one.
By applying the exchange operation repeatedly, we eventually obtain a
transversal $H''$ such that $|H''|=|H|$ and $r_{D'',W'}^\oplus
(\delta^{in}_{E'' \cup D''[W']}(Y_0))= k$, where $E''= E - H''$ and
$D''=D^+-H''$. At this point, $\cY-Y_0$ also determines the rank
$r_{D'',W'}(E'') = r_{D',W'}(E')$, and has fewer members than $\cY$.

By repeating this procedure, we eventually obtain a transversal $H^*$ which satisfies the claimed properties.
\end{proof}

Let $H^*$ be the minimum transversal given by Claim \ref{cl:Hstar}. By Lemma \ref{lem:elem2}, there is a nonempty set
$Z_j \subseteq X_j$ such that $r_{D^*, W}^\oplus
(\delta^{in}_{D^*[W]}(X_j))=f_{D^*,W}(Z_j)$, for both $j=1,2$. Thus $f_{D^*,W}(Z_1)+f_{D^*,W}(Z_2)=k-1$.
Since the removal of an arc from $D$ can decrease
$f_{W}(Z_1)+f_{W}(Z_2)$ by at most one, we have $\gamma=|H^*| \geq f_{W}(Z_1)+f_{W}(Z_2)-k+1$. As the reverse
inequality has already been proved, this completes the proof of the theorem.
\end{proof}

\fi

\end{document}